\date{\today}
\newcommand{\distas}[1]{\mathbin{\overset{#1}{\kern\z@\sim}}}
\newcommand{\distras}[1]{
  \mathbin{\overset{#1}{\kern\z@\resizebox{\wd\mybox}{\ht\mysim}{$\sim$}}}
}
    \newtheorem{theorem}{Theorem}
    \newtheorem{lemma}[theorem]{Lemma}
    \newtheorem{proposition}[theorem]{Proposition}
\theoremstyle{definition} 
    \newtheorem{definition}[theorem]{Definition}
    \newtheorem{remark}[theorem]{Remark}
    \newtheorem{example}[theorem]{Example}
    \newtheorem{exercise}[theorem]{Exercise}
    \newtheorem{assumption}[]{Assumption}
    \newtheorem{notation}[theorem]{Notation}
\def\iff{\Longleftrightarrow}
\def \CC{{\mathcal C}}
\newcommand{\E}{\mathbb{E}}
\def\Z{\mathbb{Z}}
\def\N{\mathbb{N}}
\def\PP{{\mathcal P}}
\def\WW{\mathcal{W}}
\def\CC{\mathcal{C}}
\def\ZZ{{\mathcal Z}}
\def\u{{\bf u}}
\def\<{\langle}
\def\>{\rangle}
\newcommand{\one}{{\mathbf 1}}
\newcommand\Tr{{\mbox{Tr}}}
\newcommand\mnote[1]{} 
\newcommand\be{\begin{equation*}}
\newcommand\ee{\end{equation*}}
\newcommand\ben{\begin{equation}}
\newcommand\een{\end{equation}}
\newcommand\bes{\begin{eqnarray*}}
\newcommand\ees{\end{eqnarray*}}
\newcommand\bex{\begin{exercise}}
\newcommand\eex{\end{exercise}}
\newcommand\beg{\begin{example}}
\newcommand\eeg{\end{example}}
\newcommand\benu{\begin{enumerate}}
\newcommand\eenu{\end{enumerate}}
\newcommand\beit{\begin{itemize}}
\newcommand\eeit{\end{itemize}}
\newcommand\berk{\begin{remark}}
\newcommand\eerk{\end{remark}}
\newcommand\bdefn{\begin{defintion}}
\newcommand\edefn{\end{definition}}
\newcommand\bthm{\begin{theorem}}
\newcommand\ethm{\end{theorem}}
\newcommand\bprf{\begin{proof}}
\newcommand\eprf{\end{proof}}
\newcommand\blem{\begin{lemma}}
\newcommand\elem{\end{lemma}}
\newcommand{\var}{\mbox{\rm Var}}
\newcommand{\sm}{{\raise0.3ex\hbox{$\scriptstyle \setminus$}}}
\def\CHI{\mathchoice%
{\raise2pt\hbox{$\chi$}}%
{\raise2pt\hbox{$\chi$}}%
{\raise1.3pt\hbox{$\scriptstyle\chi$}}%
{\raise0.8pt\hbox{$\scriptscriptstyle\chi$}}}
\def\smalloplus{\raise1pt\hbox{$\,\scriptstyle \oplus\;$}}
\newcommand{\varA}{\var\left(\Tr\left(\frac{A_n}{\sqrt{N}}\right)^{p_n}\right)}
\newcommand{\varAp}{\var\left(\Tr\left(\frac{A_n}{\sqrt{N}}\right)^{p}\right)}
\tikzstyle{startstop} = [rectangle, rounded corners, minimum width=3cm, minimum height=1cm,text centered, draw=black, fill=red!30]
\tikzstyle{process} = [rectangle, minimum width=3cm, minimum height=1cm, text centered, draw=black, fill=orange!30]
\tikzstyle{decision} = [diamond, minimum width=3cm, minimum height=1cm, text centered, draw=black, fill=green!30]
\tikzstyle{arrow} = [thick,->,>=stealth]
\tikzstyle{io} = [trapezium, trapezium left angle=70, trapezium right angle=110, minimum width=3cm, minimum height=1cm, text centered, draw=black, fill=blue!30]
\title {CLT for generalized patterned random matrices: a unified approach} 
\date{}  
\author[]{Kiran Kumar A.S.\thanks{kiran.kumar.as.math@gmail.com $^{\dagger}$shambhumath4@gmail.com $^{\ddagger}$koushik.saha@iitb.ac.in}}
\author[]{Shambhu Nath Maurya$^{\dagger}$}
\author[]{Koushik Saha$^{\ddagger}$}
\affil[]{$^{\ast}$ Division of Science, New York University Abu Dhabi, Abu Dhabi, United Arab Emirates}
\affil[]{$^{\dagger}$ Statistics and Mathematics Unit, Indian Statistical Institute, Kolkata,\\ 700108, West Bengal, India}
\affil[]{$^{\ddagger}$ Department of Mathematics, Indian Institute of Technology Bombay, Mumbai, 400076, Maharashtra, India}
\date{\today}
\begin{document}
\maketitle
\begin{abstract}

In this paper, we derive a unified method for establishing the distributional convergence of linear eigenvalue statistics (LES) for generalized patterned random matrices. 
We prove that for an $N \times N$ generalized patterned random matrix with independent subexponential entries and even degree monomial test functions of degree  $p_n=o(\log N/\log \log N)$, the LES converges to standard Gaussian distribution. This generalizes the CLT results on Gaussian patterned random matrices in \cite{chatterjee2009fluctuations}, \cite{adhikari_saha2017}. As an application, new results on LES of Toeplitz, Hankel, circulant-type matrices and block patterned random matrices for varying test functions are derived.
For odd degree monomial test functions, we derive the limiting moments of LES and show that it may not converge to a Gaussian distribution.  
\end{abstract}

\noindent{\bf Keywords:}
 Linear eigenvalue statistics,  patterned random matrix,  Toeplitz matrix,  
 circulant type matrices, central limit theorem.
 \vskip5pt
 \noindent{\bf AMS 2020 subject classification:} 60F05, 60B20,  15B05.

\section{  \large \textbf{  Introduction}} \label{sec:Intro}

Patterned or structured matrices have been extensively studied in mathematics since the 19th century, with their structure allowing efficient computations on them. Patterned matrices such as Toeplitz, Hankel and Circulant appear widely in different areas of mathematics, sciences and engineering. Toeplitz matrix is most well-known for Szeg\H{o}'s theorem, which connects functionals of a Fourier series to the linear eigenvalue statistics of the Hermitian Toeplitz matrix constructed using its Fourier coefficients \cite{szego_toeplitz_915}. Additionally, the covariance matrix of a weakly stationary process also takes the form of a Toeplitz matrix. Hankel matrix is used to check the solvability of the \textit{Hamburger moment problem} \cite{shohat_moment+problem_43} and in signal analysis \cite{jain_signal+analysis_15}. Another important class of patterned matrix, circulant matrix, is directly related to Weyl-Heisenberg groups and non-commutative geometry of quantum mechanics \cite{aldrovaandi_circulant_book_01}.  Circulant matrices are also used widely in the spectral analysis of time series \cite{fan+yao_time+series_book_03}. For an overview of the wide range of applications of patterned matrices, see \cite{philip+davis_circulant_book_79}, \cite{aldrovaandi_circulant_book_01} and \cite{Botcher_Toeplitz_book}.

The study of the random versions of patterned matrices was initiated in a review article by Z. D. Bai\cite{bai_zd_99}.
Consider a sequence of 
random variables $\left\{x_{i} : i \in \Z^d \right\}$ called the \textit{input sequence}. For positive integers $N$ and $d$, consider a function $L: \{1,2,\ldots, N\}^2 \rightarrow \Z^d$ called the \textit{link function}. A link function is called symmetric link function if $L(i,j)=L(j,i)$ for all $1 \leq i,j \leq N$. An $N \times N$ \textit{patterned random matrix} with link function $L$ and input sequence $\{x_i: i \in \Z^d\}$ is defined as $A=\left(x_{L(i,j)}\right)_{i,j=1}^N$.

In this paper, we focus on symmetric patterned random matrices with $\Theta(n)$\footnote[1]{$f(n)= \Theta (g(n))$ if there exist $k_1,k_2>0$ and $n_0 \in \N$ such that $k_1 g(n) \leq f(n) \leq k_2 g(n)$ for all $n \geq n_0$.} independent entries. Several important classes of random matrices fall into the category of patterned random matrices with symmetric link functions and $\Theta(n)$ independent entries. A few of the most well-known patterned random matrices of this form are Toeplitz matrix (with link function $L(i,j)=|i-j|$), Hankel matrix ($L(i,j)=i+j$), reverse circulant matrix ($L(i,j)=(i+j-2) \mod n$) and symmetric circulant matrix ($L(i,j)=n/2-|n/2-|i-j||$). 

In comparison to Wigner matrices, patterned random matrices with $\Theta(n)$ independent random variables requires less storage and allows better computational speeds.
As a result, in the last two decades, patterned random matrices turned out to be extremely useful objects in a wide range of areas in computer science, such as convolutional neural networks \cite{Liao_Yuan_Circconv_2019},\cite{Liu_Jiao_Lim_LU_Decompos_DL_2024}; signal reconstruction \cite{Toeplitz_compressed_sending_2007}, communication systems \cite{couillet_RMT+wireless_book_11} and quantum random number generation \cite{Ng_Yu_Toeplitz_Hashing_2024}. Randomized versions of patterned matrices are also widely used in non-parametric regression \cite{Yin_Liu_Mend_IEEE_Circulant_regression_2020}, solving stochastic evolution equations \cite{Qi_SEE_circulant_2022}, probabilistic error-correction \cite{Scott_Goldreich_Ron_error_correction_2020} and Johnson-Lindenstrauss embedding and dimension reduction \cite{Hui_Cheng_JL_embedding_circulant_2014}. Patterned matrices also appear in study of other random ensembles such as orthogonal ensembles and unitary ensembles \cite{Johansson_Berry_Essen_2024_EJP}.

Following the introduction of random Toeplitz and Hankel matrices in \cite{bai_zd_99}, their limiting spectral distributions (LSD) were derived independently by Bryc, Dembo, Jiang  \cite{bryc_lsd_06}, and Hammond, Miller\cite{hammond_miller_05}. Their LSDs are not among the well-known distributions and are determined in terms of their moments. These works portrayed the distinctiveness of models with $\Theta(n)$ independent entries from classical random matrices such as Wigner, GUE and GOE matrices. This led to an interest in the study of the limiting spectral distribution of other patterned random matrices \cite{Massey_Miller_2007_Palin_Toeplitz}, \cite{bose_sen_LSD_EJP}. The asymptotic behavior of patterned random matrices is relatively well-studied in the literature and their spectral properties, such as 
local spectral statistics \cite{Bogomolny_Spectrum_Random_Toep_2020_PRE},\cite{Ali_Srivastava_JPA_2022_levelspacing}, distribution of norm \cite{Nishry_Paquette_Complex_hankel_norm_2023}, \cite{Pan_John_random_circulant_norm_2015}, \cite{Bose_Hazra_saha_norm_circulant_2011}, distribution of singular values \cite{Barrera_Paulo_circulant_singular_2022}, \cite{Paulo_Toeplitz_conditionno_2022}, joint convergence \cite{bose_saha_patter_JC_annals} and their applications \cite{Goldreich_tal_Topleitz_rigidity_2018} continue to attract considerable attraction. 
	
This paper focuses on the linear eigenvalue statistics (LES) of a class of random matrices which includes patterned random matrices, as well as their block versions and banded versions. In \cite{chatterjee2009fluctuations}, Chatterjee introduced the method of second order Poincare inequalities to study the LES of Gaussian random matrices. He showed that the LES of Gaussian Toeplitz matrices for monomial test functions $\phi_n=x^{p_n}$ converges to the standard Gaussian distribution under total variation norm when $p_n=o(\log n/\log \log n)$. Later on in \cite{adhikari_saha2017}, the same method was employed to obtain similar results for symmetric circulant, reverse circulant and Hankel matrices with Gaussian entries. Recently in \cite{Fred_raja_Generalized_gaussian_fluctuations_2024}, the same method was used to prove the convergence of LES for patterned matrices with correlated Gaussian entries. 
The study of LES of patterned random matrices with general entries is more challenging. Resolvent techniques, common in the study of LES of Wigner and related matrices, becomes less useful for most patterned matrices due to the high dependence between the rows. This makes moment method combined with combinatorics as the popular method to study the LES of patterned random matrix. 

For random patterned matrices, with general entries, the first result was by Liu, Sun and Wang \cite{liu2012fluctuations}, where they established the distributional convergence to Gaussian distribution for Toeplitz matrices with zero diagonal  under uniformly bounded moments condition on the entries using moment method and heavy combinatorics. 
The combinatorial techniques in \cite{liu2012fluctuations} are sensitive to the structure of the matrix, and the same idea might not work even under slight modifications of the matrix structure. 

In this paper, we develop a unified method for studying the LES of generalized patterned random matrices and extend the results in \cite{chatterjee2009fluctuations},\cite{adhikari_saha2017}, to generalized patterned random matrices with subexponential entries. To the best of our knowledge, this is the first work dealing with the LES of any type of patterned random matrix with general input entries and varying test functions. In the next two subsections, we introduce the random matrix model we consider, called the generalized patterned matrices, and recall the basics of linear eigenvalue statistics. In Section \ref{subsec:main_results}, we discuss the main results of this paper and Section \ref{subsec:main_appl} provides some applications of the main theorems. An overview of our methods is given in Section \ref{subsec:overview}.


\subsection{ \large Generalized patterned  random matrices}

In this work, we look at sequences of symmetric random matrices with increasing dimension constructed in the following way: 
\vskip2pt
\noindent\textbf{Setup:} For $d \in \N$, consider an input sequence $\{x_i : i \in \Z^d\}$ and a sequence of symmetric link functions
\begin{equation}\label{eqn: link_fun}
	L_{n}:\{1,2, \ldots, N(n)\}^{2} \rightarrow \mathbb{Z}^{d}, n \geq 1,
\end{equation}
where $N: \mathbb{N} \rightarrow \mathbb{N}$ is a strictly increasing function. We consider the sequence of random matrices $A_n$ of dimension $N(n) \times N(n)$ of the form
\begin{equation}\label{eqn:A_ij_general}
	(A_n)_{ij}=\begin{cases}
		x_{L_n(i, j)} &\text{ if } (i,j) \in \Delta_n, \\
		0 &\text{ otherwise},
	\end{cases}
\end{equation}
where $\Delta_n$ is a sequence of subsets of $[N(n)]^2$ such that $(i,j) \in \Delta_n$ implies $(j,i) \in \Delta_n$.
We call a matrix of the form (\ref{eqn:A_ij_general}) as a \textit{generalized patterned}  random matrix. Here $[N]$ denotes the set $\{1,2, \ldots,N\}$.

For notational convenience, we shall suppress the dependence of $n$ on $N(n)$ and write $N$ to denote the dimension of the matrix $A_n$. We urge the readers to keep in mind that in all cases considered in this paper, $N$ always depends on $n$ and $N \rightarrow \infty$ as $n \rightarrow \infty$.

One of the motivations for this generalization is to bring well-known variations of patterned random matrices into the same framework.
 We present a few well-known models of random matrices which fall into this class.
\begin{enumerate}[(i)]
	
\item \textbf{Band patterned random matrices:} For a patterned random matrix, two types of banding can be performed. Choosing $\Delta_n =\{(i,j):i-b_n \leq j \leq i+b_n , i \in [N]\}$ gives the banding along the diagonal with bandwidth $b_n$, and such matrices are called band matrices. Special cases of band matrices include the tridiagonal matrices and pentadiagonal matrices. 
	Choosing $\Delta_n =\{(i,j):n+1-b_n \leq i+j \leq n+1+b_n , i \in [N]\}$ gives banding along the anti-diagonal, and such matrices are called anti-diagonal band matrices.
	
	\item \textbf{Block patterned random matrices:} 
	Let $\widetilde{L}: \{1,2,\ldots, M \}^2 \rightarrow \Z^d$ be a function and $\{B_{j} : j \in \Z^d \}$ be a sequence of independent symmetric patterned random matrices of dimension $m \times m$. Define a block patterned matrix as the $mM \times mM$ matrix with block structure
	\begin{equation}\label{eqn:defn_block_patterned}
		A= \sum_{i,j=1}^M E_{i,j} \otimes B_{\widetilde{L}(i,j)},
	\end{equation}
	where $\otimes$ denotes the tensor product, $E_{i,j}$ is the $m \times m$ matrix with $1$ at $(i,j)$ and zero everywhere else, and $\mathbf{1}$ denotes the indicator function. We emphasize that $A$ can be written as a symmetric patterned random matrix with link function depending on the function $\widetilde{L}$ and the link functions of $B_i$.
	
	For a concrete construction, consider the following example: Let $\{B_n: n \geq 0\}$ be a sequence of independent Wigner matrices of dimension $m$. Suppose for each $n \geq 0$, the input sequence of $B_n$ is given by $\{x_{n,i,j}:i,j \in \Z\}$. We construct a block Toeplitz matrix of dimension $N \times N$, for $N=mM$, as the matrix whose blocks are given by $(T_n)_{ij}= B_{|i-j|}$ and the corresponding link function is given by 
	$$L_n\left(x,y\right)= \left(|r_1-r_2|, \min \{s_1,s_2\}, \max\{s_1,s_2\}\right),$$
	where $x=(r_1-1)m+s_1$ and $y=(r_2-1)m+s_2$.
	

	\item  Let $\{x_i: i \in \Z^2\}$ be a sequence of independent Bernoulli random variables with parameter $p$, $L_n(i,j)=(\min(i,j),\max(i,j))$ and $\Delta_n=\{(i,j):1  \leq i \neq j \leq n\}$. Then $A_n=(x_{L_n(i,j)}\one_{\Delta_n}(i,j))$ is the adjacency matrix of the Erd\H{o}s-R\'enyi random graph of $n$ vertices with edge-probability $p$. 
	The adjacency matrices of generalizations of Erd\H{o}s-R\'enyi random graphs such as stochastic block models and inhomogeneous Erd\H{o}s-R\'enyi random graph also fall into this category. Further, the unsigned adjacency matrix of the \textit{Linial-Meshulam complex} \cite{Leibzirer_Rosenthal_Sparse_RSC_2022}, a higher-dimensional generalization of the Erd\H{o}s-R\'enyi random graph also falls into this class. 
		
	\item Another important subclass is the \textit{hollow version} of patterned random matrix obtained by choosing $\Delta_n=\{(i,i):1 \leq i \leq N\}^c$, i.e. the diagonal entries are chosen to be zero. Modification of patterned matrices to sparse matrices, skyline matrix and block matrices with zero blocks are also of the form \eqref{eqn:A_ij_general}.
\end{enumerate}

In this paper, we consider sequences of generalized patterned random matrices with link functions satisfying the following assumption:
\begin{assumption}\label{Condition B}
	Let $N: \N \rightarrow \N$ be an increasing function. A sequence of link functions $\{L_n:[N(n)] \rightarrow \Z^d\}_{ n \in \N}$ is said to satisfy Assumption \ref{Condition B} if there exists a constant $B < \infty$ such that
	\begin{equation}\label{eqn:propertyB}
		\sup _n \sup _{t \in \mathbb{Z}^d} \sup _{1 \leq k \leq N(n)} \#\{l: 1 \leq l \leq N(n), L_n(k, l)=t\} \leq B.
	\end{equation}
	A single link function $L:[N] \rightarrow
	\Z^d$ is said to satisfy Assumption \ref{Condition B} if the constant sequence $(L_n=L \text{ for all } n)$ obeys (\ref{eqn:propertyB}).
\end{assumption}
Note that all patterned matrices discussed till now obey Assumption \ref{Condition B}. For example, $B=1$ for symmetric circulant matrix and reverse circulant matrix, and $B=2$ for symmetric Toeplitz matrix and Hankel matrix.

\subsection{ \large Linear eigenvalue statistics}
For an $n \times n$ matrix $A_n$, and a test function $\phi$, the \textit{linear eigenvalue statistics} (LES) of $A_n$ is defined as 
\begin{equation} \label{eqn:LES}
	\mathcal{A}_n(\phi)= \sum_{i=1}^{n} \phi(\lambda_i),
\end{equation} 
where $\lambda_1 , \lambda_2 , \ldots , \lambda_n$ are the eigenvalues of $A_n$. Note that if $A_n$ is a random matrix, then $\mathcal{A}_{n}(\phi)$ is a random variable. Hence, the obvious question arises about the fluctuations of $\mathcal{A}_{n}(\phi)$ in different modes of convergence.
The study of LES of random matrices focused on one particular matrix at a time, and the proof often does not follow through even under slight changes in the structure of the matrix.



This article provide a unified approach to study the LES of random matrices considered in \eqref{eqn:A_ij_general} for monomial test functions. Recall that for $n \in \N$, the dimension of the random matrix $A_n$ is $N(n)$, simply written as $N$. In order to study the fluctuations of LES of a random matrix $A_n/\sqrt{N}$, where $A_n$ is of the form \eqref{eqn:A_ij_general}, we consider the following scaled and centered version of the LES:
\begin{equation}\label{eqn:eta_p}
	\eta_{p_n}=\frac{\operatorname{Tr}\left(\frac{A_n}{\sqrt{N}}\right)^{p_n}-\E \operatorname{Tr}\left(\frac{A_n}{\sqrt{N}}\right)^{p_n}}{\sqrt{\var \left(\operatorname{Tr}\left(\frac{A_n}{\sqrt{N}}\right)^{p_n}\right)}}.
\end{equation}
Note that $\eta_{p_n}$ depends on $ \Delta_n$ and link function $L_n$. To keep the notations simpler, we are not including these dependencies in the notation of $\eta_{p_n}$. We request the readers to keep in mind these additional dependencies.

For stating our main result, we make the following assumption on the entries.

\begin{assumption}\label{assum: subexponential}
$\{x_i: i \in \Z^d\}$ are independent random variables with mean zero $\sup \E |x_i|^k \leq (\alpha k)^k$ for some constant $\alpha>0$, for all $k$.
\end{assumption}
Assumption \ref{assum: subexponential} is equivalent to saying that the input entries are subexponential with uniform moment bound. The class of subexponential distributions includes most of the well-studied heavy tailed distributions as well \cite{Vershynin_High_dim_prob_book}.

\subsection{Main results: LES for monomial test function}\label{subsec:main_results}
Now we state our main results.  The first main theorem states that to establish CLT for LES of generalized patterned random matrices for even degree monomial test functions, it is sufficient to check the convergence of its second moments.

\begin{theorem}\label{thm:converg_LES_even}
	Let $\left\{A_n=\left(x_{L_n(i, j)} \one_{\Delta_n}(i,j)\right)_{i, j=1}^N\right\}$ be a sequence of $N \times N$ symmetric random matrices and $(p_n)$ be a sequence of even positive integers such that
	\begin{enumerate}[(i)]
		\item the sequence of link function $\{L_n\}$ satisfies Assumption \ref{Condition B},
		\item the input sequence $\left\{x_i\right\}$ satisfies Assumption \ref{assum: subexponential},
		\item there exists a constant $c>0$ such that $c^{p_n}N \leq \var \left(\operatorname{Tr}\left(\frac{A_n}{\sqrt{N}}\right)^{p_n}\right)$ for all $n$.
	\end{enumerate} 
	Then,
	\begin{equation}
		\eta_{p_n} \stackrel{d}{\longrightarrow} N(0,1).
	\end{equation}
\end{theorem}

Our method also gives the convergence of LES for fixed $p$ case, even under further relaxation on the input entries. For fixed $p$, the condition on input entries can be relaxed to the following assumption.

\begin{assumption}\label{assum: 4 moments}
	$\{x_i: i \in \Z^d\}$ are independent random variables with mean zero and uniformly bounded moments of all order.
\end{assumption} 

The following is our main result for fixed $p$ case:
\begin{theorem}\label{thm:unified_moments_p_even}
	Suppose $\left\{A_n=\left(x_{L_n(i, j)} \one_{\Delta_n}(i,j)\right)_{i, j=1}^N\right\}$ is a sequence of $N \times N$ symmetric random matrices and $p$ is a fixed even positive integer such that 
	\begin{enumerate}[(i)]
		\item the sequence of link function $\{L_n\}$ satisfies Assumption \ref{Condition B},
		\item the input sequence $\left\{x_i\right\}$ satisfies Assumption \ref{assum: 4 moments},
		\item there exists a constant $c>0$ such that $cN \leq \var \left(\operatorname{Tr}\left(\frac{A_n}{\sqrt{N}}\right)^{p}\right)$.
	\end{enumerate} 
	Then,
	\begin{align*}
		\eta_p \stackrel{d}{\longrightarrow} N(0,1).
	\end{align*}
\end{theorem}

Theorems \ref{thm:converg_LES_even} and  \ref{thm:unified_moments_p_even} prompts an immediate inquiry: Is it possible to obtain an easily verifiable sufficient condition for condition (iii) of Theorem \ref{thm:converg_LES_even}? Our next theorem provides exactly that. To state the result,
we introduce the following convention: 
\begin{notation}
	For $n \in \N$ and a link function $L_n$, we define the following non-empty sequences of sets 
	\begin{itemize}
		\item $\mathcal{Z}_n=\{r_1,r_2,\ldots\}$ is any non-empty subset of $\operatorname{Im}(L_n)$, where $\operatorname{Im}(L_n)$ denotes the image set of $L_n$.
		\item For $\mathcal{Z}_n \subseteq \operatorname{Im}(L_n)$, consider $\mathcal{S}_n \subset \{(i,j) \in \Delta_n: L_n(i,j)=r_u \text{ for some } r_u \in \mathcal{Z}_n\}$ such that $(i,j) \in \mathcal{S}_n \implies (j,i) \in \mathcal{S}_n$.
	\end{itemize}
\end{notation}

The following theorem gives a sufficient condition based only on the link functions $\{L_n\}$ for the LES to converge to a normal distribution.

\begin{theorem}\label{thm: always normal}
	Suppose $\left\{A_n=\left(x_{L_n(i, j)}\one_{\Delta_n}(i,j)\right)_{i, j=1}^N\right\}$ is a sequence of $N \times N$ symmetric random matrices obeying conditions (i)-(ii) of Theorem \ref{thm:converg_LES_even} (or Theorem \ref{thm:unified_moments_p_even}). Additionally, assume that there exist constants $c_1,c_2>0$ and non-empty sets $\mathcal{Z}_n \subseteq \operatorname{Im}(L_n)$ such that
	\begin{enumerate}[(i)]
		\item for each $n \in \N$ and $r_u \in \mathcal{Z}_n$, $\# \{(i,j) \in \mathcal{S}_n:L_n(i,j)=r_u\} \geq c_1 N$,
		\item if for some row $R_i$ of the matrix $A_n$, $\mathcal{S}_n \cap R_i \neq \phi$, then $\#(\mathcal{S}_n \cap R_i) \geq c_2 N$. 
	\end{enumerate}
	Then there exists a constant $c>0$ such that $c^p N \leq \var \left(\operatorname{Tr}\left(\frac{A_n}{\sqrt{N}}\right)^{p}\right)$ for all $p$. As a consequence, $\eta_{p_n}$ defined in (\ref{eqn:eta_p}) converges in distribution to the standard Gaussian distribution for $p_n=o(\log N/ \log \log N)$ (respectively fixed $p$).
\end{theorem}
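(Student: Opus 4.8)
The plan is to reduce the claim to showing that $\sum_{W \in \PP_2(p,p)} \theta(W) > 0$, since then $\sigma_p^2 \geq \sum_{W \in \PP_2(p,p)} \theta(W) > 0$ regardless of the sign of $(\kappa-1)$ — note $\kappa \geq 1$ always, so in fact both terms are nonnegative, but the first term alone suffices and is insensitive to the precise value of $\kappa$. Thus it is enough to exhibit a single word $W_0 \in \PP_2(p,p)$ with $\theta(W_0) = \lim_n N^{-(p+1)} \# \Pi_{L_n, \Delta_n}(W_0) > 0$, and by the already-established convergence in condition (iii) of Theorem \ref{thm:unified_moments_p_even} the limit exists, so it suffices to produce a lower bound $\# \Pi_{L_n, \Delta_n}(W_0) \geq c\, N^{p+1}$ for some $c>0$ and all large $n$.

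The natural candidate for $W_0$ is the "symmetric" pair-partition of $[2p]$ that pairs position $i$ in the first word with position $2p+1-i$ in the second word (the fully-crossing pairing, which visibly lies in $\PP_2(p,p)$ because every pair is a cross-matching). First I would describe explicitly the circuits $(\pi_1,\pi_2)$ in $\Pi_{L_n}(W_0)$: the condition $L_n(\pi_1(i-1),\pi_1(i)) = L_n(\pi_2(2p-i), \pi_2(2p-i+1))$ for all $i$ is satisfied if we take $\pi_2$ to traverse, in reverse, the same sequence of $L_n$-values that $\pi_1$ generates — concretely one can try $\pi_2(j) = \pi_1(p-j)$ extended periodically, or more robustly build $\pi_2$ step by step using hypothesis (ii). The counting then proceeds by a greedy vertex-selection argument in the spirit of the circuit arguments used throughout the patterned-matrix literature: start by choosing the "free" vertices of $\pi_1$, and at each step where an edge of $\pi_2$ must $L_n$-match an already-fixed edge of $\pi_1$, use hypothesis (ii) (each occupied row of $\mathcal{S}_n$ meets it in $\geq c_2 N$ places) together with Assumption \ref{Condition B} (at most $B$ preimages of any given $L_n$-value in a fixed row) to guarantee $\Omega(N)$ admissible choices at that step, while hypothesis (i) guarantees that the "anchor" $L_n$-values can be chosen from $\mathcal{Z}_n$ in $\Omega(N)$ ways so that $\mathcal{S}_n$ is nonempty in the relevant rows. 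Keeping all vertices in the support of $\mathcal{S}_n \subseteq \Delta_n$ ensures the circuits actually lie in $\Pi_{L_n,\Delta_n}(W_0)$, and the number of free parameters works out to $p+1$ (there are $p+1$ vertices in $\pi_1$; the vertices of $\pi_2$ are then essentially determined up to $O(1)$-to-$O(N)$ multiplicities, with the multiplicities bounded below by a constant times $N$ at each of the $p$ constrained steps and above by $B$), giving $\# \Pi_{L_n,\Delta_n}(W_0) \geq c\, N^{p+1}$.

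I would also need to double-check that $W_0$ genuinely represents a partition of $[2p]$ into exactly $p$ pairs with the cross-matching property and no accidental further identifications being forced — i.e. that $\Pi_{L_n}(W_0)$, defined with "$\Leftrightarrow$", is not empty because some spurious equality is unavoidable; here one should pick the free vertices of $\pi_1$ generically (again using Assumption \ref{Condition B} to discard the $O(N^{p})$ degenerate configurations, which is lower order) so that the circuit realizes exactly the partition $W_0$ and nothing finer. Finally, invoking Remark \ref{rem:W_in_SP_peven} is not needed, but it is worth noting consistency: for even $p$ such a $W_0$ exists, whereas the whole statement is only asserted for even $p$.

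The main obstacle I expect is the bookkeeping in the greedy count: one must order the $2p$ edges so that every time a $\pi_2$-edge is introduced, at least one of its two endpoints is already determined and the matching constraint with $\pi_1$ plus hypothesis (ii) leaves $\Omega(N)$ choices for the other endpoint — in other words, verifying that the dependency structure of $W_0$ admits such an elimination order and that hypotheses (i)–(ii) are exactly strong enough to feed it. This is the place where the specific combinatorics of the chosen $W_0$ interacts with the two geometric assumptions on $\mathcal{S}_n$, and getting the quantifiers right (uniformity in $n$, the constants $c_1,c_2,B$ combining into a single $c>0$) is the crux of the argument.
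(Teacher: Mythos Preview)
Your overall strategy---exhibit a single $W$ with $\theta(W)>0$ by bounding $\#\Pi_{L_n,\Delta_n}(W)$ from below via a greedy vertex count---is exactly the paper's. The crucial difference is the choice of $W$: the paper takes a \emph{palindromic} sentence $W\in\PP_{2,4}(p,p)$ (with $w_1[1]=w_1[p]=w_2[1]=w_2[p]$ and $w_u[i]=w_u[p+1-i]$ for $2\le i\le p-1$), whereas you take the fully-crossing $W_0\in\PP_2(p,p)$. This difference is not cosmetic; it is where your argument breaks.

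With your $W_0$, every letter of $w_1$ is distinct, so a circuit $\pi_1$ has only $p$ free vertices $\pi_1(0),\dots,\pi_1(p-1)$ (not $p+1$: remember $\pi_1(p)=\pi_1(0)$), and after choosing them with consecutive edges in $\mathcal{S}_n$ nothing in hypothesis (ii) forces the closing edge $(\pi_1(p-1),\pi_1(0))$ to lie in $\Delta_n$. For $\pi_2$ the situation is worse: every letter of $w_2$ already appeared in $w_1$, so by Assumption~\ref{Condition B} each step of $\pi_2$ has at most $B$ choices, not $\Omega(N)$ as you write; the only genuinely free datum is $\pi_2(0)$, and then the closure $\pi_2(p)=\pi_2(0)$ is an extra equation that hypotheses (i)--(ii) do not discharge. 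Your fallback $\pi_2(j)=\pi_1(p-j)$ does close, but then $(\pi_1,\pi_2)$ is determined by $\pi_1$ alone, yielding only $O(N^p)$ pairs---one order short. The paper's palindromic $W$ dissolves both obstacles: setting $\pi_u(i)=\pi_u(p-i)$ for $i>p/2$ closes each circuit automatically and realizes all self-matchings, leaving $\pi_1(0),\dots,\pi_1(p/2)$ free (each $\ge c_2N$ choices via (ii)), then $\pi_2(0)$ chosen via (i) so that the single cross-matched $L$-value $L_n(\pi_1(0),\pi_1(1))$ is realizable from it, and $\pi_2(2),\dots,\pi_2(p/2)$ free again. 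This gives $\#\Pi^*_{L_n,\Delta_n}(W)\ge \frac{c_1c_2^{p}}{B}N^{p+1}$, and Remark~\ref{remark:Pi_Ln,Delta_n-star}(a) (which holds for $\PP_{2,4}$ but not in general for $\PP_2$) converts this to $\theta(W)>0$.
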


\begin{remark}\label{rem:normalization sqrt N}
	\noindent(a) For odd degree monomial test functions, a CLT result does not necessarily hold for general input entries. For example, consider a random Toeplitz matrix matrix with input sequence $\{x_i\}$ and the test function $\phi(x)=x$, then the LES is $x_0$.
	\vskip2pt
	\noindent(b) When $p$ is a fixed even positive integer, under the assumption that $\frac{1}{N}\var \left(\Tr \left(\frac{A_n}{\sqrt{N}}\right)^p\right)$ converges, and input entries obey Assumption \ref{assum: 4 moments},
	\begin{equation*}
		\zeta_p=\frac{\Tr\left(\frac{A_n}{\sqrt{N}}\right)^p-\E \Tr \left(\frac{A_n}{\sqrt{N}}\right)^p}{\sqrt{N}} \stackrel{d}{\longrightarrow} N(0,\sigma_p^2)
	\end{equation*}
	where $\sigma_p^2=\lim_{n \rightarrow \infty} \frac{1}{N}\var \left(\Tr \left(\frac{A_n}{\sqrt{N}}\right)^p\right)$. The existence of $\sigma_p^2$ is discussed in the Appendix. 
\end{remark}

Now, we deal with the LES of generalized patterned random matrices for odd degree monomial test function ($p$ is odd). We start by stating an assumption on the input sequence which is necessary to state the result for odd $p$ case.
\begin{assumption}\label{assump: x_i all moments}
	$\left\{x_i: i \in \Z^d\right\}$ are i.i.d. random variables with mean zero, variance one and $-\infty < \E(x_i^k)=m_k < \infty$ for all $k \geq 3$.
\end{assumption}

 For an $N \times N$ matrix $A_n$ and $p \in \Z_+$, define
\begin{equation*}
	\xi_p=\frac{\Tr\left(\frac{A_n}{\sqrt{N}}\right)^p-\E \Tr\left(\frac{A_n}{\sqrt{N}}\right)^p}{\sqrt{N}}.
\end{equation*}
For odd values of $p$, we have the following theorem for LES.
\begin{theorem}\label{thm:unified_moments_p_odd}
	Let $\left\{A_n=\left(x_{L_n(i, j)} \one_{\Delta_n}(i,j)\right)_{i, j=1}^N\right\}$ be a sequence of $N \times N$ symmetric random matrices and $p$ be an odd positive integer. Suppose 
	\begin{enumerate}[(i)]
		\item the sequence of link function $\{L_n\}$ satisfies Assumption \ref{Condition B},
		\item the input sequence $\{x_i: i \in \Z^d\}$ satisfies Assumption \ref{assump: x_i all moments},
		\item for each sentence $W \in SP(p,k)$, the limit $\theta(W)=\lim_{n \rightarrow \infty} \frac{1}{N^{\frac{pk+k}{2}}}\#\Pi_{L_n,\Delta_n}\left(W\right) $ exist.
	\end{enumerate} 
	Then the limiting moments of $\xi_p$ are given by
	\begin{align*}
		\lim _{n \rightarrow \infty} \mathbb{E}\left[\xi_p^{k}\right] &= \displaystyle \sum_{W \in SP(p,k)}   \prod_{i=1}^s m_{r_i} \theta(W_{C_i}),
	\end{align*}
	where $``s"$ is the number of clusters of $G_W$ (as defined in Definition \ref{defn_cluster}), $r_i=|C_i|$ is the size of the $i^{th}$ cluster, $m_{r_i}$ is as in Assumption \ref{assump: x_i all moments}, $W_{C_i}$ is as given in Definition \ref{defn: subsentence}, and $SP(p,k)$ as in Definition \ref{def: special partition}.
\end{theorem}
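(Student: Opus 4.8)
The plan is to use the standard moment method: compute $\lim_{n\to\infty}\E[\eta_p^k]$ for every $k\geq 1$ by expanding $\eta_p^k$ as a sum over $k$-tuples of circuits, and identify which configurations survive in the limit. Writing $\eta_p = \frac{1}{N^{(p+1)/2}}\bigl(\Tr A_n^p - \E\Tr A_n^p\bigr)$, the $k$-th power expands as
\begin{equation*}
	\E[\eta_p^k] = \frac{1}{N^{k(p+1)/2}} \sum_{\pi_1,\ldots,\pi_k} \E\prod_{u=1}^k \Bigl( \prod_{i=1}^p x_{L_n(\pi_u(i-1),\pi_u(i))}\one_{\Delta_n} - \E\bigl[\cdots\bigr]\Bigr),
\end{equation*}
where each $\pi_u$ ranges over circuits of length $p$ in $[N]$. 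First I would group the $k$-tuples of circuits according to the partition of $[k]\times[p]$ they induce via $L_n$-matches, i.e. according to which sentence $W\in\WW_{p,k}$ the tuple belongs to; by Assumption \ref{Condition B} the number of tuples realizing a given sentence $W$ is $\#\Pi_{L_n,\Delta_n}(W)$ (up to the $\Pi^*$ relaxation, controlled by \eqref{eqn: Pi Ln,D_n inequality}), and this is $O(N^{|S_W|})$ where $|S_W|$ counts distinct letters, since Assumption \ref{Condition B} lets each new $L$-constraint pin down a vertex up to a bounded factor. The centering kills every sentence with an isolated word (a word sharing no letter with any other), since then the corresponding factor has mean zero and factorizes out; more generally the centering plus independence of the $x_i$ forces the surviving contributions to factor over the clusters (connected components) of $G_W$.

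Next I would carry out the power-counting to see which $W$ contribute in the limit. Each word has length $p$, so a sentence $W\in\WW_{p,k}$ has $kp$ letter-slots; the constraint $\pi_u(0)=\pi_u(p)$ gives $k$ circuit relations, and a standard edge-counting argument (as in the LSD analysis of \cite{bose_sen_LSD_EJP}) shows $\#\Pi^*_{L_n,\Delta_n}(W) = O\bigl(N^{(kp+k)/2 - (\text{excess})}\bigr)$ where the excess vanishes exactly when, within each cluster, every letter appears exactly twice except possibly structural coincidences forced by connectivity. After dividing by $N^{k(p+1)/2} = N^{(kp+k)/2}$, only sentences with $|S_W|$ maximal survive, and one must check this maximum is achieved precisely on $SP(p,k)$: within a cluster of size $2$ the extremal sentences are the pair-partitions with a cross-match, $\PP_2(p,p)$, together with the $\PP_{2,4}(p,p)$ configurations (a single $4$-block straddling the two words, everything else a pair) — note that here $p$ odd is essential, since a clique cluster of size $\geq 3$ with common letter appearing once in each word forces the remaining $p-1$ letters of each word to pair up internally, which needs $p$ odd, while $\PP_{2,4}$ and the cross-matched $\PP_2$ both need $p+p$ even, automatically true. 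For clusters of size $\geq 3$, the power count forces the clique-sentence structure: a single common letter appearing $k$ times, all others twice. Once the surviving $W$ are pinned to $SP(p,k)$, the expectation over a cluster $C_i$ of size $r_i$ contributes the joint moment of the entries, which by independence is $m_{r_i}$ (the $r_i$-th moment, from Assumption \ref{assump: x_i all moments}) times the combinatorial count $\lim_n \frac{1}{N^{(p r_i + r_i)/2}}\#\Pi_{L_n,\Delta_n}(W_{C_i}) = \theta(W_{C_i})$, which exists by hypothesis (iii); multiplying over clusters gives the stated formula $\sum_{W\in SP(p,k)} \prod_{i=1}^s m_{r_i}\,\theta(W_{C_i})$.

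There are two technical points I would be careful about. The first, and I expect the main obstacle, is the precise power-counting that justifies discarding all sentences outside $SP(p,k)$ and, crucially, showing that sentences with a cluster contributing a $\Pi_L$-versus-$\Pi_L^*$ discrepancy, or with "extra" coincidental $L$-matches beyond those recorded in $W$, are of strictly lower order — this is exactly the role played by the generalization of Lemma 2 of \cite{bose+Sen_spec+norm_07} to all $k$, i.e. Proposition \ref{lemma: Bpl general}, which I would invoke to bound $\#\Pi^*_{L_n,\Delta_n}(W) - \#\Pi_{L_n,\Delta_n}(W)$ and the cross-cluster corrections. The second is handling the centering cleanly: I would argue that subtracting $\E[\Tr A_n^p]$ inside each factor is equivalent, at the level of surviving terms, to restricting the sum to sentences in which no word is isolated, and that for the $SP(p,k)$ sentences the centering does not remove the main term (since every word lies in a cluster of size $\geq 2$). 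The existence of all the relevant limits $\theta(W_{C_i})$ is assumed, and the finiteness of all moments $m_k$ from Assumption \ref{assump: x_i all moments} ensures each term in the finite sum is well-defined, completing the identification of $\lim_n \E[\eta_p^k]$.
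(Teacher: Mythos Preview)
Your proposal is correct and follows essentially the same route as the paper: expand $\E[\eta_p^k]$ over sentences in $\WW_{p,k}$, invoke Proposition \ref{lemma: Bpl general} (together with Proposition \ref{prop: W_k,p size 2}) to isolate $W\in SP(p,k)$, then evaluate the cluster expectations using the i.i.d.\ structure of Assumption \ref{assump: x_i all moments}. One minor slip worth noting: for odd $p$ the set $\PP_{2,4}(p,p)$ is empty by Definition \ref{defn: P_2,2 , P2,4}(c), so size-$2$ clusters contribute only through $\PP_2(p,p)$ (giving $m_2=1$), and in a clique cluster the common letter appears once in each word while all others pair within their word, so $\E x_{w_u}=0$ and $\E\bigl[\prod_{u\in C}x_{w_u}\bigr]=m_{|C|}$---the paper spells out both of these points explicitly.
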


\begin{remark}
	\noindent (a) Note from Theorem \ref{thm:unified_moments_p_odd} the for odd $p$, the limiting moments are dependent on the moments of the input sequence. As a result, the LES of odd degree monomial test functions do not have a universal limit, provided $\theta(W_{C})$ is non-zero. For Toeplitz matrices, this result was proved in \cite{liu2012fluctuations}.
	\vskip2pt
	\noindent (b) In Theorem \ref{thm:unified_moments_p_odd}, only the limiting moments are calculated. To conclude convergence in distribution using moment method, it is additionally required to show that the limiting moment sequence uniquely determine a distribution \textit{(i.e. the moment sequence is M-determinate)}. But that does not hold for all patterned random matrices. For symmetric circulant matrix, the  limiting moment sequence is M-determinate, in fact the limit is Gaussian \cite{m&s_RcSc_jmp}. Whereas for symmetric Toeplitz matrix, the limiting moment sequence does not uniquely determine a distribution \textit{(M-indeterminate)} \cite{liu2012fluctuations}. Hence convergence in distribution cannot be concluded for all matrices of the form \eqref{eqn:A_ij_general}. In Theorem \ref{thm:unified_moments_p_odd}, if we additionally assume that the limiting moment sequence is M-determinate, say with distribution $\mathcal{F}$, then $\xi_p$ converges in distribution to  $\mathcal{F}$.  
\end{remark}

\subsection{Applications of our work}\label{subsec:main_appl}
 Linear eigenvalue statistics (LES) of different patterned random matrices have been studied separately in the literature, making use of their explicit trace formula. A drawback of this technique is that the combinatorial calculations are sensitive to changes in the structure of the matrices. In this paper, we attempt to build a unified framework for studying the LES of patterned random matrices for monomial (and easily generalized to polynomial) test functions, where this step is circumvented by building on the techniques and philosophy of \cite{bose_sen_LSD_EJP}. 
 
 The unified approach developed in this paper gives a variety of results for patterned random matrices. Some of the important cases, for the fixed even degree monomial test functions case are summarized in Table \ref{tab:Classical_new}.

\begin{table}[]
	\centering
	\begin{tabular}{|c|c|c|c|} 		
		\hline
		& \shortstack{Full Matrix \\$\Delta_n =[n]^2$} & \shortstack{Banding \\ $b_n/b \rightarrow c>0$}  & \shortstack{Anti-Banding \\ $b_n/b \rightarrow c>0$} \\ [2pt]
		\hline 
		Wigner & Girko \cite{Gikko_book} & Shcherbina \cite{shcherbina2015}   & Not Known \\ 
		Symmetric Toeplitz & Liu et al. \cite{liu2012fluctuations} &  Liu et al. \cite{liu2012fluctuations} & $\ast$ \\ 
		Palindromic Toeplitz & $\ast$  & $\ast$   & $\ast$  \\
		Generalized Toeplitz & $\ast$  & $\ast$   & $\ast$  \\
		Hankel &  Liu et al. \cite{liu2012fluctuations} & $\ast$   & Liu et al. \cite{liu2012fluctuations} \\
		Palindromic Hankel& $\ast$  & $\ast$   & $\ast$  \\
		Generalized Hankel & $\ast$  & $\ast$   & $\ast$  \\
		Symmetric circulant& Maurya and Saha \cite{m&s_RcSc_jmp} & $\ast$   & $\ast$  \\
		Reverse circulant& Maurya and Saha \cite{m&s_RcSc_jmp} & $\ast$   & $\ast$  \\
		\hline
	\end{tabular}
	\caption{Table showing different models of classical patterned matrices, along with reference to the first known proof of Gaussian fluctuation of the LES for fixed even degree monomial test functions. The symbol $\ast$ denotes the new results in this paper.}\label{tab:Classical_new}
\end{table}

\begin{table}[h!]
	\centering
	\begin{tabular}{||c|c|c||}
		\hline \hline
		& & \\
		\multirow{2}{10em}{$A=(B_{L_1(i,j)})_{i,j=1}^d$, $B_{\ell}=(x_{\ell,L_2(i,j)})_{i,j=1}^m$}
		& & \multirow{2}{8em}{$B_\ell$ is any other patterned matrix}\\
		& $B_\ell$ is Wigner &\\
		& & \vspace{-2mm}\\
		\hline \hline
		\vspace{-2mm}
		& &\\
		\shortstack{Block Wigner matrices \\ ($L_1(i,j)=(\min (i,j),\max(i,j))$)} & N.A. & $d$-finite, $m \rightarrow \infty$ \\
		&& \vspace{-2mm}\\
		\hline
		& &\\
		\shortstack{Other Block patterned matrices } & $m$-finite, $d \rightarrow \infty$ & All cases \\
		&&  \\
		\hline \hline
	\end{tabular}
	\caption{Table showing different models of block patterned random matrices and the cases when convergence of LES to Gaussian random variable occur. Here other patterned matrices include Toeplitz, Hankel and circulant-type matrices.}\label{table:block}
\end{table}


Block versions of patterned random matrices also important models of random matrices. The LSD of different block patterned matrices were studied in  \cite{Oraby_block_circulant},  \cite{Basu_Bose_Ganguly_Block_Toeplitz_LSD} and \cite{Murat_Kopp_Miller_Block_Circulant_2013}.
Analyzing the LES of block patterned random matrices presents additional challenges.
In particular, focusing on the explicit link function complicates the combinatorial calculations involved in determining the LES for these matrices.
 Block patterned matrices fit into the general framework described in this paper and we  verify Theorem \ref{thm: always normal} for several well-known block patterned matrices, namely, Block Wigner, Block Toeplitz, Block Hankel and Block versions of circulant-type matrices. 
A detailed discussion is given in see Section \ref{subsec:block_patterned_eg}.  
Table \ref{table:block} gives a concise form of the block patterned matrices for which Theorem \ref{thm:converg_LES_even} is applicable.

An important contribution of our work is the study of LES of patterned random matrices for varying test functions. This generalizes the results in \cite{chatterjee2009fluctuations} and \cite{adhikari_saha2017}, where Gaussian fluctuations was proved for different classical patterned random matrices with standard Gaussian entries. To the best of our knowledge, this is the first work dealing with the study of LES of patterned random matrices with general entries for varying test functions. 

Apart from these patterned and their block-version of matrices, our theorem can also be applied to other models of random matrices. $d$-disco, Checkerboard, Swirl, Hankel-type matrices are a few of them for which the conditions of Theorem \ref{thm: always normal} can be verified. For details on LES results of these matrices, see Section \ref{subsec:other_model_eg}. 

\subsection{Technical overview}\label{subsec:overview}

Now we briefly outline the structure of this manuscript. In Section \ref{sec:Prelim}, we introduce the combinatorial preliminaries required in this paper. In Section \ref{sec:LES_ind}, we provide a sketch of the central idea used to deal with the LES of any generalized patterned random matrix, and then we prove Theorems \ref{thm:converg_LES_even}, \ref{thm:unified_moments_p_even}, \ref{thm: always normal} and \ref{thm:unified_moments_p_odd} in separate subsections. This is done by assuming the required preliminary results, which we prove in the succeeding sections. In Section \ref{sec:proof of Bpl_prop}, we prove Propositions \ref{prop: W_k,p size 2} and \ref{lemma: Bpl general} along with the lemmas required to prove them. Finally, in Section \ref{sec:LES+example_ind}, we discuss the applications of our main theorem, proving LES results for several patterned random matrices. The convergence of LES for the fixed $p$ case is discussed in Appendix.

The logical structure of this paper is captured in the following diagram.

\vskip7pt

\begin{tikzpicture}[node distance=2cm]
	\node (lem+Qt) [startstop] {Lemma \ref{lemma:choices pi t}, Lemma \ref{lemma:choice of pi_t reordered}};
	\
	\node (lem+ste1a) [startstop, above of=lem+Qt,  xshift=7cm] {Lemma \ref{prop: Pi_L F-R}};
	
	\node (lem+ste1) [startstop, right of=lem+Qt,  xshift=5cm] {Lemma \ref{prop:Pi C_k,p}};
	
	\node (lem+ste2) [startstop, right of=lem+ste1,  xshift=2cm] {Lemma \ref{prop:existence_Type1subg}};
	
	\node (pro1) [io, below of=lem+Qt] {Proposition \ref{prop: W_k,p size 2}};
	\node (pro2) [io, right of=pro1,  xshift=7cm] {Proposition \ref{lemma: Bpl general}};
	\node (thm4) [decision, below of=pro1,  yshift=-1.5cm,] {Theorem \ref{thm: always normal}};
	\node (thm12) [decision, right of=pro1, yshift=-3.5cm, xshift=3cm] {Theorems \ref{thm:converg_LES_even}, \ref{thm:unified_moments_p_even}, \ref{thm:unified_moments_p_odd}};
	

	\draw [arrow] (lem+Qt) -- (pro1);
	\draw [arrow] (pro1) -- (thm12);
	\draw [arrow] (pro1) -- (thm4);
	
	\draw [arrow] (lem+ste1a) -- (lem+ste1);
	\draw [arrow] (lem+ste1) -- (pro2);
	\draw [arrow] (lem+ste2) -- (pro2);

	\draw [arrow] (pro2) -- (thm12);
	
	\label{fig:Overview}
	
\end{tikzpicture}

\section{\large \bf \textbf{Preliminaries}}\label{sec:Prelim}

\subsection{Words, sentences and clusters} \label{sec:words}
In this section, we introduce essential combinatorial objects necessary through out the paper.
\begin{definition}\label{defn:word}
	Let $[p]=\{1,2,\ldots,p\}$. We can label partitions of $[p]$ by words of length $p$ such that the first occurrence of each letter is in alphabetical order. The set of all words of length $p$ is denoted by $\WW_p$.
	
	 For example, for $p=5$, the partition $\{\{1,3,5\},\{2,4\}\}$ is represented by the word \textit{ababa}. For a word $w$, the number of distinct letters of $w$ is denoted by $|w|$.
	
	A word $w \in \WW_p$ is called a pair-partition if each letter in $w$ repeats twice. The set of all pair-partitions of $[p]$, is denoted by $\mathcal{P}_2(p)$. Note that $\mathcal{P}_2(p)$ is an empty set if $p$ is odd.
\end{definition}
	Now, we introduce the combinatorial notion of a sentence.

\begin{definition}\label{def:sentence}
	For $k \geq 1$, a partition of $\{1,2,\ldots, k\} \times \{1,2,\ldots,p\}$ is represented by a sentence with $k$ words each of length $p$, by assigning different letters to different blocks of the partition so that the first occurrence of the letters in the sentence is in the alphabetical order. In particular, the $i$-th word of a sentence represent the elements in $\{i\} \times [p]$. For example, the partition $\{\{(1,1),(2,1)\},\{(1,2),(1,4)\},\{(1,3),(2,4)\},\{(2,2),(2,3)\}\}$ of $\{1,2\} \times \{1,2,3,4\}$, can be represented as $(abcb,addc)$. The set of all sentences of length $k$, with each word having length $p$, is denoted by $\mathcal{W}_{p,k}$. For $k=0$, we take $\mathcal{W}_{p,k}$ as the empty set and for $k=1$, $\mathcal{W}_{p,k}=\mathcal{W}_p$. 
\end{definition}


 Next, we recall the definition of multiset. 
\begin{definition}\label{defn:multiset}
	A multiset is a tuple $S=(A,m_S)$ where $A$ is a set and $m_S: A \rightarrow \Z_+$ is called the \textit{multiplicity} function. $A$ is the set of distinct elements in the multiset and for $a \in A$, $m_S(a)$ gives the number of occurrences of $a$ in the multiset. By a slight abuse in notation, we shall use $a \in S$ to denote that an element $a$ belongs to the underlying set.
	Throughout this paper, for a given multiset $S$ and $a \in S$, we shall use $m_S(a)$ to denote the multiplicity of $a$ in $S$. By convention, $m_S(a)=0$ if $a \notin S$.
\end{definition}
Consider the following operation defined on multisets: For multisets $S_1=(A_1,m_{S_1})$ and $S_2=(A_2,m_{S_2})$, we define the multiset $S_1 \uplus S_2= \left(A_1 \cup A_2, m_{S_1}+m_{S_2}\right)$, where $\cup$ denotes the usual set union.
\begin{definition}\label{defn:SW}
	For a word $w \in \WW_p$, we define by $S_w$ the multiset of letters in $w$. For a sentence $W=(w_1,w_2,\ldots ,w_k) \in \WW_{p,k}$, we define 
	\begin{equation*}
		S_W= S_{w_1}\uplus S_{w_2}\uplus\cdots \uplus S_{w_k}.
	\end{equation*}
		For a sentence $W=(w_1,w_2,\ldots ,w_k)$, a letter $`x$' is called a cross-matched letter if $x \in S_{w_i}$ for at least two values of $i$. 
\end{definition}
Next, we define the notion of sub-sentences.
\begin{definition}\label{defn: subsentence}
	For a sentence $W=(w_1,w_2,\ldots,w_k) \in \WW_{p,k}$ and set $\CC \subset [k]$, the sub-sentence corresponding to $\CC$ is defined as $W_{\CC}=(w_i)_{i \in \CC}$.
\end{definition}

The following partitions of $[2] \times [p]$ are common in the study of LES of patterned random matrices \cite{liu2012fluctuations}. Here, we introduce the partitions in terms of sentences.
\begin{definition}\label{defn: P_2,2 , P2,4}
	We consider the following type of sentences:
		\vskip2pt
		\noindent (a) For a positive integer $p$, we denote a subset of $\WW_{p,2}$ by $\mathcal{P}_2(p, p)$, which consists of such sentences $W=(w_1,w_2) \in \WW_{p,2}$ such that
		\begin{enumerate}[(i)]
			\item $m_{S_W}(x)=2$ for all $x \in S_W$,
			\item $S_{w_1} \cap S_{w_2} \neq \phi$.
		\end{enumerate}
		\vskip2pt
		\noindent (b) For even positive integer $p$, we denote a subset of $\mathcal{P}_2(2p)$ by $\mathcal{P}_{2,4}(p, p)$, which consists of all sentences $W=(w_1,w_2) \in \WW_{p,2}$ satisfying the following conditions:
		\begin{enumerate}[(i)]
			\item $m_{S_W}(x)=4$ for some $x \in S_W$ and $m_{S_W}(y)=2$ for all $y \neq x$,
			\item the multiset intersection of $S_{w_1}$ and $S_{w_2}$ is $\{x,x\}$.
		\end{enumerate}
		For odd $p$, we assume $\mathcal{P}_{2,4}(p, p)$ is an empty set.
\end{definition} 

Now, we define the notion of a graph associated with sentences.

\begin{definition}\label{defn_cluster}
	Let $W=(w_1,w_2,\ldots,w_k) \in \mathcal{W}_{p,k}$ be a sentence. We  construct a simple graph $G_{W}=(V_{W},E_{W})$ associated with $W$ in the following fashion: We define $V_{W}=\{ 1, 2,\ldots , k\}$ and for $r \neq s$, $\{r,s\} \in E_{W}$ if $S_{w_r} \cap S_{w_s} \neq \phi$. A maximal connected subgraph of $G_W$ is called a \textit{cluster}.
\end{definition}
Note that for any sentence $W$ and any cluster $C$ of $G_W$, $G_{W_{V(C)}}=C$. Thus, without loss of generality, we shall represent cluster as the graph of a sentence $W^\prime$ such that $G_{W^\prime}$ is connected.
We now define a special type of cluster called clique cluster.
\begin{definition}\label{defn:special_cluster}
	For $p \geq 1$, $k \geq 3$ and $W \in \mathcal{W}_{p,k}$, the graph $G_W$ is called a \textit{clique cluster} if
	\begin{enumerate}[(i)]
		\item $G_W$ is a complete graph, and
		\item there exists a letter $x \in S_W$ such that $S_{w_i} \cap S_{w_j}=\{x\}$ for all distinct $i,j$.
	\end{enumerate}
	A sentence $W \in \mathcal{W}_{p,k}$ is called a \textit{clique sentence} if $G_W$ is a clique cluster, the common letter appears exactly $k$ times and every other letter appears exactly twice in $S_W$. 
\end{definition}

\begin{definition}\label{def: special partition}
	For $p \geq 1$ and $k \geq 3$, a sentence $W \in \mathcal{W}_{p,k}$ is called a \textit{special partition} if for each cluster $C$ of $G_W$,
	\begin{enumerate}[(i)]
		\item $|V(C)|\geq 2$,
		\item if $|V(C)| =2$, then $W_C$ belongs to either $\PP_2(p,p)$ or $\PP_{2,4}(p,p)$, and
		\item  if $|V(C)| \geq 3$, then $W_C$ is a clique sentence,
	\end{enumerate}
	where $V(C)$ denotes the vertex set of $C$.
	The set of all special partitions in $\mathcal{W}_{p,k}$ is denoted by $SP(p,k)$.
\end{definition}
Observe the following remark from the definition of $SP(p,k)$.
\begin{remark}\label{rem:W_in_SP_peven}
	\noindent (a) For even $p$, it can be seen that for any $W \in SP(p,k)$, all clusters of $G_W$ has size 2. To see this, note that if $C$ is a cluster of size greater than two, then $W_C$ is a clique sentence with common letter $`c$', say. For a word $w_i$, $i \in V(C)$, it follows that $`c$' appears exactly once in $w_i$ and all other letters must appear exactly twice in $w_i$. Note that this is possible only when $p$ is odd.\\
\noindent (b) As a consequence of (a), it follows that for $p$ even and $k$ odd, $SP(p,k)$ is empty.
\end{remark}
\subsection{Circuits}Next, we define the notion of circuits.
Recall the definition of link function $L: [N] \rightarrow \Z^d$, defined in \eqref{eqn: link_fun}. 
\begin{definition}\label{defn:circuit}
	Consider a link function $L: [N] \rightarrow \Z^d$, where $d \in \N$. A function $\pi:\{0,1,2, \ldots, p\} \rightarrow[N]$ with $\pi(0)=\pi(p)$ is called a \textit{circuit} of length $p$. Two circuits of length $p$, $\pi_1$ and $\pi_2$ are said to be equivalent if their $L$-values match at the same locations. That is, for all $1 \leq i, j \leq p$, 
	\begin{align}
		L(\pi_1(i-1), \pi_1(i)) =L(\pi_1(j-1), \pi_1(j)) 
		\Longleftrightarrow  L(\pi_2(i-1), \pi_2(i)) =L(\pi_2(j-1), \pi_2(j)).
	\end{align}
\end{definition}
The above condition gives an equivalence relation on the set of all circuits of length $p$. It is immediate that any equivalence class can be indexed by a partition of $\{1,2, \ldots, p\}$ where each block of the given partition identifies the positions where the $L$-matches take place. Therefore, each equivalence class can be uniquely expressed by a word defined in Definition \ref{defn:word}. For example, if $p=5$, then the partition $\{\{1,3,5\},\{2,4\}\}$ is represented by the word \textit{ababa}. This partition identifies all circuits $\pi$ for which $L(\pi(0), \pi(1))=L(\pi(2), \pi(3))=L(\pi(4), \pi(5))$ and $L(\pi(1), \pi(2))=L(\pi(3), \pi(4))$.


Let $w$ be a word and let $w[i]$ denote the $i$-th letter of $w$.
For a link function $L$ and a word $w$, we denote the equivalence class of circuits corresponding to $w$ by
\begin{equation*}
	\Pi_L(w)=\left\{\pi: w[i]=w[j] \Leftrightarrow L(\pi(i-1), \pi(i))=L(\pi(j-1), \pi(j))\right\}.
\end{equation*}
Similarly, for a link function $L$ and a sentence $W=(w_1,w_2,\ldots ,w_k ) \in \WW_{p,k}$, we define 
\begin{equation}\label{defn: Pi L(W)}
	\Pi_L(W)=\{(\pi_1,\pi_2,\ldots ,\pi_k ): w_r[i]=w_s[j] \Leftrightarrow L(\pi_r(i-1), \pi_r(i))=L(\pi_s(j-1), \pi_s(j))\}.
\end{equation}
Unfortunately, $\# \Pi_L(W)$ is harder to compute and therefore for convenience in computation, we define a slightly larger set $\Pi_L^*(W)$ given by
$$	\Pi_L^*(W)=\{(\pi_1,\pi_2,\ldots ,\pi_k ): w_r[i]=w_s[j] \implies L(\pi_r(i-1), \pi_r(i))=L(\pi_s(j-1), \pi_s(j))\}.$$
It easily follows that if $(\pi_1,\pi_2,\ldots, \pi_k) \in \Pi_L(W)$, then  $(\pi_1,\pi_2,\ldots, \pi_k) \in \Pi_L^*(W)$ and therefore $\#\Pi_L(W) \leq \#\Pi_L^*(W)$ for every sentence $W$. 
For a set $\Delta \subset [N]^2$, we define the following sets 
\begin{align}\label{eqn: Pi L,D(W)}
	\Pi_{L,\Delta}(W) &=\{(\pi_1,\pi_2,\ldots ,\pi_k ) \in \Pi_{L}(W) :  (\pi_u(i-1), \pi_u(i)) \in \Delta \ \forall \ 1 \leq u \leq k , 1 \leq i \leq p \}, \nonumber \\
	\Pi^*_{L,\Delta}(W)&=\{(\pi_1,\pi_2,\ldots ,\pi_k ) \in \Pi^*_{L}(W) :  (\pi_u(i-1), \pi_u(i)) \in \Delta \ \forall \ 1 \leq u \leq k , 1 \leq i \leq p \}. 
\end{align}
Note that in \eqref{eqn: Pi L,D(W)}, the circuits are maps from $\{0,1,\ldots, p\}$ to $[N]$ and hence the cardinality of $\Pi_{L,\Delta}$ depends on $p$ and $N$ as well. To make the notations simpler, we are suppressing this dependence in the notation.

In general, for a sequence of link functions (given by $L_ n$) and a sequence of subsets $(\Delta_n)$, we are interested in finding the cardinality of $\Pi_{L_n,\Delta_n}(W)$. The following inequality directly follows from \eqref{eqn: Pi L,D(W)} and shall play an important role in computations at later stages.
\begin{equation}\label{eqn: Pi Ln,D_n inequality}
	\# \Pi_{L_n, \Delta_n}(W) \leq \# \Pi_{L_n}(W) \leq \# \Pi_{L_n}^*(W) \mbox{ for all $W \in \WW_{p,k}$ and $\Delta_n \subseteq [N]^2$}.
\end{equation}

Let $W$ be a sentence such that $G_W$ has clusters $C_1,C_2,\ldots, C_m$ with $\# V(C_i)=k_i$. Then we have
\begin{equation}\label{eqn: PiL star product}
	\frac{1}{N^{\frac{pk+k}{2}}} \#\Pi_{L_n, \Delta_n}^*(W) = \prod_{i=1}^{m}\frac{1}{N^{\frac{pk_i+k_i}{2}}} \#\Pi_{L_n,\Delta_n}^*(W_{C_i}), 
\end{equation}
where $W_{C_i}$ is as defined in Definition \ref{defn: subsentence}.

\section{\large \bf \textbf{LES for independent entries}} \label{sec:LES_ind}
In this section, we present a unified technique to derive the limiting moment sequence of the LES of any symmetric generalized patterned random matrix for monomial test functions. In Section \ref{sec:gen+setup_ind}, we sketch a unified technique used to deal with the limiting moment sequence of the LES of generalized patterned random matrices and in Section \ref{subsec:even_p}, we prove Theorems \ref{thm:converg_LES_even} and \ref{thm:unified_moments_p_even}. Finally, in  Sections \ref{subsec:odd_p} and \ref{sec:LES_nestlink+ind}, we prove Theorems \ref{thm:unified_moments_p_odd} and \ref{thm: always normal}, respectively.
\subsection{\large \bf \textbf{General setup for independent entries:}} \label{sec:gen+setup_ind}
In this section, we present the unifying thread common in the proofs of Theorems \ref{thm:converg_LES_even}, \ref{thm:unified_moments_p_even} and \ref{thm:unified_moments_p_odd}. Let $\eta_p$ be as defined in (\ref{eqn:eta_p}). First, we discuss how $\eta_p$ can be connected to the sets $\Pi_L(W)$ defined in (\ref{defn: Pi L(W)}). Recall the definition of circuit from Definition \ref{defn:circuit}. For the time being, in dealing with $\eta_p$ we work with the normalization $1/\sqrt{N}$ instead of $1/\varA$, as this does not significantly change the arguments. It follows from (\ref{eqn:eta_p}) that for all $k \in \N$,
\begin{equation}\label{eqn: eta p power k}
	\eta_{p_n}^k= \frac{1}{N^{\frac{p_nk+k}{2}}}\displaystyle \sum_{\pi_1,\ldots , \pi_k} \big[(x_{\pi_1}- \E x_{\pi_1})(x_{\pi_2}- \E x_{\pi_2}) \cdots (x_{\pi_k}- \E x_{\pi_k}) \hspace{-2mm}\prod_{u=1,2,\ldots , k \atop i=1,2,\ldots , p_n} \hspace{-2mm}\one_{\Delta_n}(\pi_u(i-1),\pi_u(i))\big],
\end{equation}
where for $u=1,2, \ldots,k$; $\pi_u:\{0,1,2,\ldots, p_n\} \rightarrow \{1,2,\ldots, N\}$ are circuits of length $p_n$ and $x_{\pi_u}$ is defined as  
$$x_{\pi_u}=x_{L_n(\pi_u(0),\pi_u(1))}x_{L_n(\pi_u(1),\pi_u(2))}\cdots x_{L_n(\pi_u(p_n-1),\pi_u(p_n))}.$$ 
The above summation can be split further as
\begin{align}\label{eqn:E w_p^k}
	\E (\eta_{p_n}^k) &=\frac{1}{N^{\frac{p_nk+k}{2}}}  \sum_{W \in \WW_{p_n,k}} \sum_{(\pi_1,\pi_2,\ldots ,\pi_k) \in \Pi_{L_n,\Delta_n}(W)} \mathbb{E} \big( \prod_{i=1}^k (x_{\pi_{i}}- \mathbb{E}x_{\pi_{i}}) \big),
\end{align}
where $\Pi_{L_n,\Delta_n}(W)$ is defined in \eqref{eqn: Pi L,D(W)}.

Suppose $\{x_i\}$ obeys one of Assumptions \ref{assum: subexponential}, \ref{assum: 4 moments} or \ref{assump: x_i all moments}. First we make the following observations.
\begin{enumerate}[(i)]
	\item If there exists $\pi_{t}$ which is not connected with any other $\pi_{s}$ (i.e. $\operatorname{Im}(\pi_t) \cap \operatorname{Im}(\pi_s)=\phi$ for all $s$), then due to the independence of the input sequence, the expectation term will be zero.
	\item The same conclusion is true also if a letter appears only once in the sentence $W$.
\end{enumerate} 
Note that for $W \in SP(p,k)$, both of the above conditions do not hold. Therefore, for proving Theorem \ref{thm:converg_LES_even}, it is sufficient to show that for $p_n=o(\log N / \log \log N)$, only sentences in $SP(p_n,k)$ contribute to $\lim_{n \rightarrow \infty} \E (\eta_{p_n}^k)$, i.e.,
\begin{equation}\label{eqn: Card Pi_L(W) zero}
	\theta_2= \frac{1}{N^{\frac{p_nk+k}{2}}} \sum_{W \notin SP(p_n,k)} \sum_{(\pi_1,\pi_2,\ldots ,\pi_k) \in \Pi_{L_n,\Delta_n}(W)}  \mathbb{E} \big( \prod_{i=1}^k (x_{\pi_{i}}- \mathbb{E}x_{\pi_{i}} ) \big)=o(1).	
\end{equation}
For the fixed $p$ case (Theorems \ref{thm:unified_moments_p_even} and \ref{thm:unified_moments_p_odd}), the calculations are simpler since $SP(p,k)$ is a finite set in this case. But for varying $p$, the size of $SP(p,k)$ also grows with $p$.


Observe that $ \mathbb{E} \big( \prod_{i=1}^k (x_{\pi_{i}}- \mathbb{E}x_{\pi_{i}} )\big)$ is uniformly bounded for all sentences $W \in \WW_{p,k}$ and therefore
\begin{equation*}
	|\theta_2| \leq \frac{1}{N^{\frac{p_nk+k}{2}}} \sum_{W \notin SP(p_n,k)} \sum_{\Pi_{L_n,\Delta_n}(W)}  |\mathbb{E} \big( \prod_{i=1}^k (x_{\pi_{i}}- \mathbb{E}x_{\pi_{i}} )\big)|   \leq	\frac{1}{N^{\frac{p_nk+k}{2}}} \sum_{W \notin SP(p_n,k)} M_W \#\Pi_{L_n, \Delta_n}^*(W),
\end{equation*}
where $M_W$ is a constant that depends only on the sentence $W$. Hence proving \eqref{eqn: Card Pi_L(W) zero} is reduced to proving that
\begin{equation}\label{eqn: Pi L_n,D_n theta 2 relation}
	\sum_{W \notin SP(p_n,k) \atop S_W(x) \neq 1 \forall x} M_{W}\# \Pi_{L_n, \Delta_n}^*(W)=o(N^{\frac{p_nk+k}{2}}).
\end{equation}
Observe that both $M_W$ and $\#\Pi_{L_n, \Delta_n}^*(W)$ depend on $p_n$, and we need appropriate bounds on them to establish \eqref{eqn: Pi L_n,D_n theta 2 relation}. We use the moment bound on the entries to obtain a bound on $M_W$ and the bound on $\#\Pi_{L_n, \Delta_n}^*(W)$ is achieved by the following two propositions.

\begin{proposition}\label{prop: W_k,p size 2}
Let $\{L_n:{[N(n)]^2 \rightarrow \Z^d}\}$ be a sequence of link functions obeying Assumption \ref{Condition B} with the constant $B$. Then
	\begin{enumerate}[(a)]
		\item  for every sentence  $W \in \mathcal{P}_2(p,p) \cup \mathcal{P}_{2,4}(p,p)$,
		\begin{equation}
			\frac{1}{N^{p+1}}\# \Pi_{L_n}(W) \leq  \frac{1}{N^{p+1}}\# \Pi_{L_n}^*(W) \leq B^{2p}.
		\end{equation}
		For all other $W\in \WW_{p,2} \setminus \left(\mathcal{P}_2(p,p) \cup \mathcal{P}_{2,4}(p,p)\right)$ with at least one cross-matched element, and each element in $S_W$ appearing at least twice,
		\begin{equation*}
			\frac{1}{N^{p+1}}\# \Pi_{L_n}^*(W) \leq \frac{B^{2p}}{\sqrt{N}}.
		\end{equation*}
		\item 	For $k \geq 3$, let $W \in \WW_{p,k}$ be a clique sentence. Then
		\begin{equation*}
			\frac{1}{N^{\frac{pk+k}{2}}}\# \Pi_{L_n}(W) \leq B^{pk},
		\end{equation*}
		where clique sentence is as defined in Definition \ref{defn:special_cluster}.
	\end{enumerate}
\end{proposition}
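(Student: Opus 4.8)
\textbf{Proof plan for Proposition \ref{prop: W_k,p size 2}.}

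The plan is to reduce everything to a counting problem about circuits and then exploit Assumption \ref{Condition B} systematically. The central tool, as flagged in the abstract, is the generalization of Lemma 2 of \cite{bose+Sen_spec+norm_07} to all $k$, i.e. Proposition \ref{lemma: Bpl general}, so I would first set up notation so that this proposition can be applied cleanly. For a connected sentence $W=(w_1,\ldots,w_k)\in\WW_{p,k}$ in which every letter of $S_W$ appears at least twice, a tuple $(\pi_1,\ldots,\pi_k)\in\Pi_{L_n}^*(W)$ is determined by choosing the ``generating vertices'' of the circuits: once we fix the vertices $\pi_u(i)$ for $i$ ranging over a suitable spanning set of positions, the $L$-match constraints force (via Assumption \ref{Condition B}) only boundedly many choices for the remaining vertices. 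The number of free vertices is governed by the number of distinct letters plus the number of circuits (roughly $kp+k$ vertices total, with each of the $|S_W|$ letters imposing one constraint), and the whole game is to show that for $W\notin SP(p,k)$ this count falls strictly below $\frac{pk+k}{2}$, while for clique sentences and for $W\in SP(p,2)$ it is exactly $\frac{pk+k}{2}$ (up to $O(1)$).

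For part (a), I would split $\WW_{p,2}$ according to the structure of the two words. The key dichotomy is whether $W\in\PP_2(p,p)\cup\PP_{2,4}(p,p)$ or not. If $W\in\PP_2(p,p)$, every letter appears exactly twice with at least one cross-match, so the circuit pair has exactly $p+1$ free vertices (this is the classical Toeplitz-type count, and the upper bound $\#\Pi^*_{L_n}(W)=O(N^{p+1})$ follows from Assumption \ref{Condition B} by the standard ``each new letter pins down a vertex up to $B$ choices'' argument). The case $\PP_{2,4}(p,p)$ is handled the same way: the unique block of size $4$ costs one extra constraint but the cross-matching structure still yields $p+1$. For all other $W$ with each letter appearing at least twice and at least one cross-match, I would argue that either some letter appears $\geq 3$ times, or the partition is a pair-partition with no suitable $4$-block-with-two-on-each-side structure --- in both situations an elementary Euler-characteristic / counting bound shows the number of free vertices drops to at most $p$, giving $o(N^{p+1})$. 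This is exactly where Proposition \ref{lemma: Bpl general} does the heavy lifting: it packages the inequality ``number of free vertices $\leq$ (something $< \frac{pk+k}{2}$)'' uniformly.

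For part (b), let $W\in\WW_{p,k}$ be a clique sentence with common letter $a$ appearing $k$ times (once in each $w_i$) and every other letter appearing exactly twice. The count of free vertices is $\sum_{i}(\text{free vertices of }w_i\text{ alone}) $ minus the identifications forced by the shared letter $a$. Each individual word $w_i$ on its own, with its private letters appearing twice and $a$ appearing once, is (after removing the $a$-edge) a ``Toeplitz-like'' word of odd length contributing about $\frac{p+1}{2}$ vertices; gluing the $k$ words along the single common letter $a$ merges them at essentially $2$ vertices each, and a careful bookkeeping gives total free vertices $=\frac{pk+k}{2}$, hence $\#\Pi_{L_n}(W)=O(N^{(pk+k)/2})$. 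Again Assumption \ref{Condition B} supplies the matching upper bound.

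The main obstacle I anticipate is the combinatorial heart of part (a): proving that every $W\in\WW_{p,2}\setminus SP(p,2)$ with all letters of multiplicity $\geq 2$ and at least one cross-match genuinely has at most $p$ free vertices, with no exceptional configurations sneaking through. One must rule out, for instance, sentences with several $4$-blocks, or a $4$-block positioned entirely within one word, or mixed multiplicities --- and show each such case strictly loses a vertex. I expect this is precisely the content that Proposition \ref{lemma: Bpl general} is designed to absorb, so the real work is in verifying that its hypotheses apply to every sentence outside $SP(p,2)$ and that the resulting exponent bound is strict; the passage from $\#\Pi^*_{L_n}$ to the ambient normalization $N^{(pk+k)/2}$ and the reduction to connected $G_W$ are routine by the displayed factorization over clusters.
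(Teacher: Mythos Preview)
Your overall strategy (reduce to counting generating vertices, then use Assumption \ref{Condition B} to bound the number of circuits by $O(N^{\text{free vertices}})$) is exactly right, and the target exponents $p+1$ and $\frac{pk+k}{2}$ are correct. But there is a genuine confusion in your plan for part (a): you repeatedly say Proposition \ref{lemma: Bpl general} ``does the heavy lifting'' for the case $W\in\WW_{p,2}\setminus SP(p,2)$, yet that proposition is stated only for $k\geq 3$. In the paper's architecture, Proposition \ref{prop: W_k,p size 2}(a) \emph{is} the $k=2$ analogue of Proposition \ref{lemma: Bpl general}, not a consequence of it, so invoking it here would be circular.

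The paper proves part (a) directly via Lemma \ref{lemma:choices pi t}, which is precisely the ``each repeated letter pins down a vertex up to $B$ choices'' bound you describe, plus one refinement: if a word $w_i$ contains a \emph{new} letter that appears only once in $S_{w_i}$, the count for $\pi_i$ drops by a full factor of $N$ (not just $B$). With this, the argument for $W\notin SP(p,2)$ is a two-line case split on $|W|$: since every letter repeats, $|W|\leq p$; if $|W|\leq p-2$ the crude bound $O(N^{|W|+2})$ already gives $o(N^{p+1})$; if $|W|=p$ then $W$ is a pair-partition and the cross-match forces $W\in\PP_2(p,p)$; if $|W|=p-1$ then either some cross-matched letter appears once in some $w_i$ (and Lemma \ref{lemma:choices pi t} gives $O(N^p)$), or every cross-matched letter has multiplicity $\geq 4$, and a count $4r+2(p-1-r)\leq 2p$ forces $r=1$, i.e.\ $W\in\PP_{2,4}(p,p)$. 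No Euler characteristics or appeals to Proposition \ref{lemma: Bpl general} are needed. For part (b) the paper is even shorter: a clique sentence has $|W|=\frac{pk-k}{2}+1$ distinct letters and the common letter appears once in $w_1$, so Lemma \ref{lemma:choices pi t} immediately gives $\#\Pi^*_{L_n}(W)=O(N^{|W|+k-1})=O(N^{(pk+k)/2})$; your gluing picture is morally the same count but less direct.
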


%

\begin{proposition}\label{lemma: Bpl general}
	Let $\{L_n\}$ be a sequence of link functions obeying Assumption \ref{Condition B} with the constant $B$. For $p \geq 1$ and $ k \geq 3$, let $W =(w_1,w_2,\ldots, w_k) \in \mathcal{W}_{p,k}$ be such that each letter in $S_W$ is repeated at least twice. If $G_W$ is connected but $W$ is not a clique sentence, then
\begin{equation*}
	\frac{1}{N^{\frac{pk+k}{2}}} \#\Pi_{L_n}^*(W) \leq \frac{B^{pk}}{\sqrt{N}},
\end{equation*}
where $B$ is the constant in Assumption \ref{Condition B}.
\end{proposition}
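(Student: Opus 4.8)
The plan is to bound $\#\Pi_{L_n}^*(W)$ by a direct counting argument that estimates, vertex by vertex, how many choices each circuit value $\pi_u(i)$ admits once earlier values are fixed. Recall that $\Pi_{L_n}^*(W)$ requires only the implication $w_r[i]=w_s[j]\implies L_n(\pi_r(i-1),\pi_r(i))=L_n(\pi_s(j-1),\pi_s(j))$, so the natural accounting is: as we build the tuple of circuits one edge-step at a time, a step is ``free'' (roughly $N$ choices) when its letter is being seen for the first time and the new vertex $\pi_u(i)$ has not yet been pinned down, and a step is ``constrained'' ($O(1)$ choices, by Assumption \ref{Condition B}) when its letter has already occurred, because then $L_n(\pi_u(i-1),\pi_u(i))$ is forced and, with $\pi_u(i-1)$ known, property $B$ leaves at most $B$ admissible values for $\pi_u(i)$. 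The total count is then $O\!\big(N^{\,\#\{\text{independently chosen vertices}\}}\big)$, so the whole proposition reduces to showing that the number of vertices one can choose freely is at most $\frac{pk+k}{2}-1 = \frac{(p+1)k}{2}-1$ strictly, i.e.\ at least one below the ``critical'' value.

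The key combinatorial bookkeeping I would set up is the following. Fix a word, say $w_1$, as the ``anchor'' and generate the circuits in the order $w_1,w_2,\dots,w_k$, and within each word in the order $\pi_u(0),\pi_u(1),\dots,\pi_u(p)$, always remembering $\pi_u(0)=\pi_u(p)$. For the anchor word $w_1$: the vertex $\pi_1(0)$ costs $N$, and thereafter each step $i$ costs $N$ if $w_1[i]$ is a first occurrence of its letter within $w_1$, and $O(1)$ otherwise; the closing constraint $\pi_1(p)=\pi_1(0)$ removes one free choice. For a subsequent word $w_u$ ($u\ge 2$): since $G_W$ is connected, $w_u$ shares a letter with some earlier word, and each step whose letter already appeared (in $w_u$ or in an earlier word) is constrained. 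The crucial point, exactly as in Lemma 2 of \cite{bose+Sen_spec+norm_07} for $k=4$, is a parity/counting inequality: if $b_u$ denotes the number of distinct letters making their global first appearance in $w_u$ and $f_u$ the number of free vertex-choices in generating $\pi_u$, then one shows $f_u \le b_u + 1$, with the $+1$ coming from the initial vertex $\pi_u(0)$ (which may itself be forced by a shared letter, further reducing the count), and a second reduction from the cycle-closing identity. Summing over $u$, the total number of free vertices is at most $(\text{total distinct letters}) + k = |S_W|_{\mathrm{dist}} + k$, where $|S_W|_{\mathrm{dist}}$ is the number of distinct letters; and since every letter is repeated at least twice, $|S_W|_{\mathrm{dist}} \le pk/2$, giving $\le \frac{pk}{2}+k = \frac{pk+k}{2} + \frac{k}{2} \cdots$ — which is not yet tight, so the argument must be sharpened to track \emph{where} the equality $f_u = b_u+1$ can hold and to exhibit at least one extra forced vertex unless $G_W$ is a clique sentence.

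This sharpening is what I expect to be the main obstacle, and it is the heart of the proof. The strategy is to analyze the ``defect'': each cycle-closing identity $\pi_u(p)=\pi_u(0)$ either (a) is automatically satisfied, in which case it did not cost a constraint but then some earlier step in $w_u$ must have been constrained in a way that was not counted, or (b) genuinely forces one vertex, contributing a gain. One argues that the \emph{only} way to simultaneously saturate all these inequalities is for every word to have the shape forced by the clique-sentence definition: one common letter appearing once in each word and all other letters appearing exactly twice \emph{within a single word}, with the words pairwise sharing only that common letter. Any deviation — a letter shared across words that is not the common one, a letter of multiplicity $>2$, a cluster structure that is complete-but-not-of-clique-type — produces at least one additional forced vertex, yielding the strict bound $N^{\frac{pk+k}{2}-1}$ and hence $o(1)$ after dividing by $N^{(pk+k)/2}$. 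I would organize this as: (1) reduce to $G_W$ connected (already done in the surrounding text); (2) prove the per-word inequality $f_u\le b_u+1$ via the edge-step counting plus Assumption \ref{Condition B}; (3) sum and identify the equality cases; (4) show non-clique-sentence structure forces a strict drop by at least one; (5) conclude. Steps (3) and (4) are the delicate ones and will require a careful case analysis on the interaction graph $G_W$ and on the internal letter-multiplicity profile of each $w_u$, mirroring but substantially generalizing the $k=4$ computation of \cite{bose+Sen_spec+norm_07}.
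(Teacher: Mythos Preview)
Your generating-vertex framework is the right one and matches the paper's Lemma~\ref{lemma:choices pi t}. The gap is quantitative: the naive bound you derive, $|S_W|_{\mathrm{dist}}+k\le pk/2+k$, exceeds the target exponent $(pk+k)/2=pk/2+k/2$ by $k/2$, not by $1$. So the sharpening in your steps (3)--(4) must manufacture roughly $k/2$ additional constraints, and then argue that clique sentences are exactly the configurations saturating that count. Your phrase ``exhibit at least one extra forced vertex'' understates the task by a factor of $k/2$; this is not a cosmetic issue, because a proof that only finds one extra constraint would fail already for $k=4$.

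The paper closes this gap with two structural devices you are missing. First, it fixes a spanning tree $T_W$ of $G_W$ together with an injective labelling $\varphi:E(T_W)\to S_W$ by cross-matched letters, and processes the words in order of \emph{decreasing} distance from the center of $T_W$. Each of the $k-1$ tree edges then contributes a saving in one of two ways: either the letter $\varphi(e)$ is new and appears only once in the outer word (a ``cycle-closing'' save, counted by a quantity $R$), or it has multiplicity $\ge 3$ in $S_W$ (a ``distinct-letter'' save, reducing $|S_W|_{\mathrm{dist}}$). The proof is then a case analysis on $R\in\{0,1,\ge 2\}$ and on the number $U$ of letters with multiplicity $\ge 4$, with the tree-center ordering chosen precisely so that in the borderline cases $R\le 1$ one can re-root and extract one more constraint from a well-placed leaf. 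Second, for sentences where no spanning tree admits an injective $\varphi$, the paper introduces an auxiliary forest decomposition (``Type~I subgraphs'') and reduces to the previous case component by component. Neither the spanning-tree ordering nor this decomposition appears in your outline; your step (4), ``careful case analysis on the interaction graph and the internal multiplicity profile,'' is exactly where these tools are needed, and without them the analysis does not close.
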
 

Proposition \ref{lemma: Bpl general} is the backbone of this paper. The proof for Proposition \ref{lemma: Bpl general} is a bit long and involves some ideas from graph theory. For now, we shall skip the proofs of Propositions \ref{prop: W_k,p size 2} and \ref{lemma: Bpl general}, and move on to proving the main theorems. The proofs of these propositions 
are given in Section \ref{sec:proof of Bpl_prop}.

\subsection{\large \bf \textbf{Proof of Theorems \ref{thm:converg_LES_even} and \ref{thm:unified_moments_p_even}}:} \label{subsec:even_p}
With the help of the above propositions, we prove Theorem \ref{thm:converg_LES_even} using the method of moments and Wick formula. 

\begin{proof}[Proof of Theorem \ref{thm:converg_LES_even}]
	Let $(p_n)$ be a sequence of even positive integers. Recall  from (\ref{eqn:E w_p^k}) that
\begin{align*} 
	\E (\eta_{p_n}^k) 
	&= \frac{1}{N^{\frac{p_nk}{2}}\left(\varA\right)^{k/2}} \Big[ \sum_{W \in SP(p_n,k)} \sum_{\Pi_{L_n,\Delta_n}(W)} \mathbb{E} \big( \prod_{i=1}^k (x_{\pi_{i}}- \mathbb{E}x_{\pi_{i}} )\big) \\ 
	& \qquad \qquad \qquad \qquad+  \sum_{W \notin SP(p_n,k)} \sum_{\Pi_{L_n,\Delta_n}(W)} \mathbb{E} \big( \prod_{i=1}^k (x_{\pi_{i}}- \mathbb{E}x_{\pi_{i}} )\big)  \Big],
\end{align*}
where $(\pi_1,\pi_2,\ldots,\pi_k) \in \Pi_{L_n, \Delta_n}(W)$, $SP(p_n,k)$ is as defined in Definition \ref{def: special partition} and  
$$x_{\pi_i}=x_{L_n(\pi_i(0),\pi_i(1))}x_{L_n(\pi_i(1),\pi_i(2))}\cdots x_{L_n(\pi_i(p_n-1),\pi_i(p_n))}.$$ 

First, we look at the term 
\begin{equation}\label{eqn:contribution_SP(p,k)_even_1}
	\frac{1}{N^{\frac{p_nk}{2}}\left(\varA\right)^{\frac{k}{2}}} \times \sum_{W \notin SP(p_n,k)} \sum_{\Pi_{L_n,\Delta_n}(W)} \mathbb{E} \big( \prod_{i=1}^k (x_{\pi_{i}}- \mathbb{E}x_{\pi_{i}} )\big).
\end{equation}
Note that the summand in \eqref{eqn:contribution_SP(p,k)_even_1} is zero if there exists a letter in $S_W$ that appears only once. Further, by condition (iii) of Theorem \ref{thm:converg_LES_even}, \eqref{eqn:contribution_SP(p,k)_even_1} is bounded by
\begin{equation}\label{eqn:contribution_SP(p,k)_even_2}
	\frac{1}{c^{\frac{p_nk}{2}}N^{\frac{(p_n+1)k}{2}}} \sum_{W \notin SP(p_n,k)  \atop S_W(x) \neq 1 \forall x} \sum_{\Pi_{L_n,\Delta_n}(W)} \left|\mathbb{E} \big( \prod_{i=1}^k (x_{\pi_{i}}- \mathbb{E}x_{\pi_{i}}) \big) \right|.
\end{equation} 
 Further, since the input entries satisfy Assumption \ref{assum: subexponential}, it follows that for a fixed $k$,
\begin{equation}\label{eqn:moment_bound}
	 \left|\mathbb{E} \big( \prod_{i=1}^k (x_{\pi_{i}}- \mathbb{E}x_{\pi_{i}} )\big) \right| \leq \beta(\alpha p_nk)^{p_nk},
\end{equation}
where $\beta$ is a constant that does not depend on $p_n$.

Consider a sentence $W \notin SP(p_n,k)$ with clusters $C_1,C_2,\ldots,C_r$ of size $k_1,k_2,\ldots,k_r$, respectively, $k_i \geq 2$ for all $i$ and every letter appearing at least twice. Without loss of generality, suppose the subsentences $W_{C_1},W_{C_2},\ldots,W_{C_s}$ are elements of $\PP_{2}(p_n,p_n) \cup \PP_{2,4}(p_n,p_n)$ or a clique sentence. It then follows from \eqref{eqn: PiL star product} and Propositions \ref{prop: W_k,p size 2}, \ref{lemma: Bpl general} that
\begin{align*}
	\frac{1}{N^{\frac{p_nk+k}{2}}}\#\Pi_{L_n, \Delta_n}^*(W) &=\prod_{i=1}^r \frac{1}{N^{\frac{p_nk_i+k_i}{2}}}\# \Pi_{L_n, \Delta_n}^*(W_{C_i}) \\
	&\leq B^{p_n({k_1+k_2+\cdots+k_s})} \prod_{i=s+1}^{r}\frac{1}{N^{\frac{p_nk_i+k_i}{2}}}
	\# \Pi_{L_n, \Delta_n}^*(W_{C_i}) \\
	&\leq B^{p_n(k_1+k_2+\cdots+k_s)} \times \frac{B^{p_n(k_{s+1}+k_{s+2}+\cdots +k_r)}}{N^{\frac{r-s}{2}}} \leq \frac{B^{p_nk}}{\sqrt{N}},
\end{align*}
where the last inequality follows from the observation that $s<r$ for all sentences $W \notin SP(p_n,k)$. Therefore, it follows that \eqref{eqn:contribution_SP(p,k)_even_2} is bounded above by
\begin{equation}\label{eqn:contribution_SP(p,k)_even_3}
	\frac{1}{c^{\frac{p_nk}{2}} N^{\frac{(p_n+1)k}{2}}} \sum_{W \notin SP(p_n,k)  \atop S_W(x) \neq 1 \forall x} \#\Pi_{L_n,\Delta_n}(W) \beta(\alpha p_nk)^{p_nk} \leq \frac{\beta}{\sqrt{N}} B^{p_nk} (\alpha p_n k)^{p_nk} (p_nk)^{p_nk} \times \left(\frac{1}{c^{\frac{p_nk}{2}}}\right).
\end{equation}
Direct calculation implies that the right hand side of \eqref{eqn:contribution_SP(p,k)_even_3} is of the order of $o(1)$ when $p_n=o(\log N /\log \log N)$. This implies that 
\begin{equation}\label{eqn:Eeta_p^k2}
	\E (\eta_{p_n}^k) 
	= \frac{1}{N^{\frac{p_nk}{2}}} \times \frac{1}{\left(\varA\right)^{k/2}}\sum_{W \in SP(p_n,k)} \sum_{\Pi_{L_n,\Delta_n}(W)} \mathbb{E} \big( \prod_{i=1}^k (x_{\pi_{i}}- \mathbb{E}x_{\pi_{i}} )\big)+o(1).
\end{equation} 

Note from Remark \ref{rem:W_in_SP_peven} that $SP(p_n,k)$ is an empty set for $k$ odd and therefore 
\begin{equation*}
	\lim_{n \rightarrow \infty} 	\E (\eta_{p_n}^k) = 0
\end{equation*}
for odd $k$. Next, note from \eqref{eqn:Eeta_p^k2} that 
\begin{equation*}
	\E (\eta_{p_n}^2)= \frac{1}{N^{p_n}} \times \frac{1}{\varA}\sum_{W \in SP(p_n,2)} \sum_{\Pi_{L_n,\Delta_n}(W)} \mathbb{E} \big( \prod_{i=1}^2 (x_{\pi_{i}}- \mathbb{E}x_{\pi_{i}} )\big)+o(1).
\end{equation*}
By the definition of $\eta_{p_n}$, $\E (\eta_{p_n}^2)=1$ and therefore, we get
\begin{equation}\label{eqn:var=1}
	\frac{1}{N^{p_n}} \times \frac{1}{\varA}\sum_{W \in SP(p_n,2)} \sum_{\Pi_{L_n,\Delta_n}(W)} \mathbb{E} \big( \prod_{i=1}^2 (x_{\pi_{i}}- \mathbb{E}x_{\pi_{i}} )\big)=1+o(1).
\end{equation}

Next, we calculate the contribution of sentences $W \in SP(p_n,2k)$. It follows from Remark \ref{rem:W_in_SP_peven} that each cluster of $W$ belongs either to $\PP_{2}(p_n,p_n)$ or $\PP_{2,4}(p_n,p_n)$. Note from \eqref{eqn:Eeta_p^k2} that

\begin{equation*}
	\mathbb{E}\left(\eta_{p_n}^{2k}\right) = \frac{1}{N^{p_nk}} \frac{1}{\left(\varA\right)^{k}} \sum_{W \in SP(p_n,2k)} \sum_{\Pi_{L_n,\Delta_n}(W)} \mathbb{E} \left( \prod_{i=1}^{2k} (x_{\pi_{i}}- \mathbb{E}x_{\pi_{i}})\right) +o(1).
\end{equation*}

Note that for even $p_n$, each sentence in $SP(p_n,2k)$ is composed of $k$ sub-sentences each made of two words. Suppose that the sentences $W_{1},W_{2},\ldots, W_{k}$ are given, with $r$ sentences in $\PP_{2}(p_n,p_n)$ and the rest $(k-r)$ sentences belonging to $\PP_{2,4}(p_n,p_n)$. We aim to enumerate the number of sentences $W \in SP(p_n,2k)$ such that sentences $W_{1},W_{2}\ldots, W_{k}$ are the sub-sentences corresponding to the clusters of $W$. Note that the number of ways of choosing $k$ clusters of size two is $\#\PP_2(2k)$, the number of pair-partitions of $[2k]$. And the number of ways of choosing $r$ clusters among this particular choice of $k$ clusters such that the corresponding sub-sentences are elements of $\PP_{2}(p_n,p_n)$ is ${k \choose r}$. Once the above are fixed, we can uniquely determine the sentence $W \in SP(p_n,2k)$, by imposing the condition that different clusters do not have the same letters.
This implies that
\begin{align}\label{eqn: E eta p k choose r}
	&\mathbb{E}\left(\eta_{p_n}^{2k}\right) =	\frac{1}{N^{p_nk}}\frac{1}{\left(
	\varA\right)^k}  \#\mathcal{P}_2(2k)\sum_{r=0}^k \hspace{-2mm} \sum_{W_1, \ldots , W_r \in \mathcal{P}_2(p_n,p_n) \atop W_{r+1},\ldots ,W_k \in \mathcal{P}_{2,4}(p_n,p_n) } \nonumber\\ & \hspace{60mm} \sum_{\Pi_{L_n,\Delta_n}(W)} {k \choose r} \mathbb{E} \left( \prod_{i=1}^{2k} x_{\pi_i}- \mathbb{E}x_{\pi_i}\right)+o(1).
\end{align}

Before proceeding further, we make two observations:
\begin{enumerate}[(a)]
	\item Consider a sentence $W$ with subsentences $W_1,W_2,\ldots, W_k$. Then, note that $\Pi_{L_n,\Delta_n}(W)$ is a larger set compared to $\Pi_{L_n, \Delta_n}(W_1) \times \Pi_{L_n,\Delta_n}(W_2) \times \cdots \times \Pi_{L_n, \Delta_n}(W_k)$. We also have
	\begin{equation*}
		\Pi_{L_n, \Delta_n}(W) \setminus \left(\times_{i=1}^k \Pi_{L_n, \Delta_n}(W_i)\right)= \bigcup_{\tilde{W} \neq W} \Pi_{L_n,\Delta_n}(\tilde{W}),
	\end{equation*}
	where the union is over all words $\tilde{W}=(\tilde{w}_1,\tilde{w}_2,\ldots,\tilde{w}_k) \neq W$, such that $w_r[i]=w_s[j] \implies \tilde{w}_r[i]=\tilde{w}_s[j]$. In particular, note that such sentences $\tilde{W}$ are not elements of $SP(p_n,k)$ and therefore by \eqref{eqn:contribution_SP(p,k)_even_3}, the contribution due to such sentences is of the order $o(1)$.
	
	\item Suppose the vertex set of $W_i$ is $\{i_1,i_{2}\}$. Using the independence of the input entries $\{x_i\}$, we get that
	\begin{equation*}
		\mathbb{E} \left( \prod_{i=1}^{2k} \left(x_{\pi_i}- \mathbb{E}x_{\pi_i}\right)\right)=\prod_{i=1}^k \mathbb{E}\left((x_{\pi_{i_{1}}}-\E x_{\pi_{i_1}})(x_{\pi_{i_2}}-\E x_{\pi_{i_2}})\right).
	\end{equation*}
\end{enumerate}
Combining the above observations, we get that 
\begin{align*}
&\frac{1}{N^{p_n k}\left(\varA\right)^k} \sum_{\Pi_{L_n,\Delta_n}(W)} \E \left( \prod_{i=1}^{2k} \left(x_{\pi_i}-\E x_{\pi_i}\right) \right) \\ &=\frac{1}{N^{p_nk}\left(\varA\right)^k} \prod_{i=1}^k \sum_{(\pi_{i_1},\pi_{i_2}) \in \Pi_{L_n, \Delta_n}\left(W_i\right)} \mathbb{E} \left( \prod_{j=1,2} (x_{{\pi_{i_j}}}- \mathbb{E}x_{\pi_{i_j}}\right)+o(1) .
\end{align*}

Therefore (\ref{eqn: E eta p k choose r}) can be written as
\begin{align*}
	&\mathbb{E}\left(\eta_{p_n}^{2k}\right)  \\
		&=\frac{\#\mathcal{P}_2(2k)}{N^{p_nk}\left(\varA\right)^k}\displaystyle\sum_{r=0}^k  \sum_{W_1, \ldots , W_r \in \mathcal{P}_2(p_n,p_n) \atop W_{r+1},\ldots ,W_k \in \mathcal{P}_{2,4}(p_n,p_n) }  {k \choose r}  \prod_{ i=1}^k \sum_{\Pi_{L_n,\Delta_n}(W_i)} \mathbb{E} \big( \prod_{j=1}^2 (x_{\pi_{i_j}}- \mathbb{E}x_{\pi_{i_j}} )\big)  +o(1) \\
		  &=\frac{\#\mathcal{P}_2(2k)}{N^{p_nk}\left(\varA\right)^k}\displaystyle\sum_{r=0}^k   {k \choose r} \Bigg(\sum_{{W}_1 \in \mathcal{P}_2(p_n,p_n) } \sum_{(\pi_1,\pi_2) \in \Pi_{L_n,\Delta_n}(W_1)} \mathbb{E} \big( \prod_{i=1}^2 (x_{\pi_{i}}- \mathbb{E}x_{\pi_{i}} )\big) \Bigg)^r \\
		&\hspace{40mm}\times\Bigg(\sum_{ {W}_2 \in \mathcal{P}_{2,4}(p_n,p_n)}  \sum_{(\pi_1,\pi_2) \in \Pi_{L_n,\Delta_n}(W_2)} \mathbb{E} \big( \prod_{i=1}^2 (x_{\pi_{i}}- \mathbb{E}x_{\pi_{i}} )\big) \Bigg)^{k-r} +o(1) \\
		&= \#\mathcal{P}_2(2k)\Bigg( \frac{1}{N^{p_n}\varA}\sum_{{W}_1 \in SP(p_n,2) } \sum_{(\pi_1,\pi_2) \in \Pi_{L_n,\Delta_n}(W_1)} \mathbb{E} \big( \prod_{i=1}^k (x_{\pi_{i}}- \mathbb{E}x_{\pi_{i}} )\big) \Bigg)^k+o(1) \\
		&=\#\mathcal{P}_2(2k)+o(1),
	\end{align*}
	where the last equation follows from \eqref{eqn:var=1}. 
	This shows that  $\lim_{n \rightarrow \infty} \E (\eta_{p_n}^k)= \#\mathcal{P}_2(k)$ for all $k$. Thus using the Wick formula, it follows that limiting distribution is standard Gaussian and the proof of Theorem \ref{thm:converg_LES_even} is complete.
\end{proof}
\begin{proof}[Proof of Theorem \ref{thm:unified_moments_p_even}]
	The proof of Theorem \ref{thm:unified_moments_p_even} is similar to the proof of Theorem  \ref{thm:converg_LES_even}, and we only the major differences. Note that if the input sequence satisfies Assumption \ref{assum: 4 moments}, then in place of \eqref{eqn:moment_bound} we have
	\begin{equation*}
		\E \left(\prod_{ i=1}^k \left(x_{\pi_i}-\E x_{\pi_i}\right)\right) \leq C_W,
	\end{equation*}
	where $C_W$ is a constant that depends only on the sentence $W$.
	
	As a result, it follows that in this case the left-hand-side of \eqref{eqn:contribution_SP(p,k)_even_3} is bounded by $\frac{1}{\sqrt{N}}B^{pk}(pk)^{pk}\max \{C_W: W \notin SP(p,k)\}$. Since this term goes to zero, it follows that for fixed $p$, $\eta_p$ converges to standard Gaussian distribution, following the arguments in the proof of Theorem \ref{thm:converg_LES_even}.
\end{proof}

\subsection{\large \bf \textbf{Proof of Theorem \ref{thm:unified_moments_p_odd}}:} \label{subsec:odd_p}
In this section, we find the limiting moment sequence of LES when $p$ is an odd fixed positive integer.
\begin{proof}[Proof of Theorem \ref{thm:unified_moments_p_odd}]
		Recall that for $k=1$, we already have $\mathbb{E}(\xi_p^k)=0$. For $k \geq 2$, recall  from (\ref{eqn:E w_p^k}) and (\ref{eqn: Card Pi_L(W) zero}) that
	\begin{align*} 
		\E (\xi_p^k) 
		&= \frac{1}{N^{\frac{(p+1)k}{2}}} \sum_{W \in SP(p,k)} \sum_{\Pi_{L_n,\Delta_n}(W)} \mathbb{E} \big( \prod_{i=1}^k (x_{\pi_{i}}- \mathbb{E}x_{\pi_{i}} )\big)  + o(1).
	\end{align*} 
	For $k \geq 2$, consider $k$-tuples of circuits $(\pi_1,\pi_2,\ldots , \pi_k)$ and $(\pi_1^\prime,\pi_2^\prime,\ldots , \pi_k^\prime)$ such that $\pi_i$ and $\pi_i^\prime$ are equivalent for all $1 \leq i \leq k$. Note that since $\{x_i\}$ is an i.i.d. sequence, it follows that the expectation of the summand in (\ref{eqn: eta p power k}) are equal for $(\pi_1,\pi_2,\ldots , \pi_k)$ and $(\pi_1^\prime,\pi_2^\prime,\ldots , \pi_k^\prime)$, and so we denote $x_{\pi_i}$ by $x_{w_i}$. Thus
	\begin{equation}\label{eqn : E eta power k odd}
		\lim_{n \rightarrow \infty}	\mathbb{E}\left(\xi_p^k\right)= \lim_{n \rightarrow \infty} \frac{1}{N^{\frac{pk+k}{2}}}\sum_{W \in SP(p,k)} \prod_{C : \text{ cluster of } G_W } \mathbb{E} \left( \prod_{i \in V(C)} (x_{{w_i}}- \mathbb{E}x_{{w_i}})\right) \#\Pi_{L_n,\Delta_n}(W).
	\end{equation}
	Consider a sentence $W \in SP(p,k)$ and let $C$ be a cluster of $G_W$ such that $\#V(C)=2$. Since $p$ is odd, $\PP_{2,4}(p,p)$ is an empty set. Therefore, a non-zero contribution occurs only when $W_C \in \PP_{2}(p,p)$. For $W_C=(w_{i_1},w_{i_2}) \in \PP_{2}(p,p)$, as $\E( x_i)=0$ and $\E( x_i^2)=1$ for all $i$, we get $\E (x_{w_{i_1}})=\E( x_{w_{i_2}})=0$ and
	$$\mathbb{E}\left(\prod_{i \in V(C)} (x_{{w_i}}-\mathbb{E}x_{{w_i}})\right)=1=m_2.$$
	Now, suppose $C$ is  a cluster of $G_W$ of size $r \geq 2$. Then by the definition of special partition, $W_C$ is a clique sentence and it follows that $S_{W_C}$ contains a letter that appears $r=\# C$ times and all other letters appear twice. Since $\{x_i\}$ obeys Assumption \ref{assump: x_i all moments}, 
	\begin{align*}
		\mathbb{E}\left( \prod_{i \in V(C)} (x_{{w_i}}-\mathbb{E}x_{{w_i}})\right)= m_r.
	\end{align*}
	Substituting back in the expression of expectation in (\ref{eqn : E eta power k odd}), we get the result.
\end{proof}
\begin{remark}
Observe that if the entries are not identically distributed, then $\mathbb{E} \big(\prod_{i=1}^k (x_{\pi_{i}}- \mathbb{E}x_{\pi_{i}} )\big)$ need not be same for two different $k$-tuples of circuits from  $\Pi_{L_n,\Delta_n}(W)$. This is the reason that in comparison to Assumption \ref{assum: 4 moments} for even degree monomial ($p$ even), we need a stronger assumption (Assumption \ref{assump: x_i all moments}) to deal with odd degree monomial test functions ($p$ odd).
\end{remark}

\subsection{\large \bf \textbf{Proof of Theorem \ref{thm: always normal}}} \label{sec:LES_nestlink+ind}
In this section, we prove Theorem \ref{thm: always normal}. 
First, we make the following remark based on Proposition \ref{prop: W_k,p size 2}.
\begin{remark}\label{remark:Pi_Ln,Delta_n-star}
	Let $\{L_n:{[N(n)]^2 \rightarrow \Z^d}\}$ be a sequence of link functions obeying Assumption \ref{Condition B}. For every sentence  $W \in \WW_{p,k}$, note that the set $\Pi_{L_n}^*(W) \setminus \Pi_{L_n}(W)$ can be written as
		\begin{equation}\label{eqn:differnce Pil,PiL star}
			\Pi_{L_n}^*(W) \setminus \Pi_{L_n}(W)=\bigcup_{\tilde{W} \in Q(W)}\Pi_{L_n}(\tilde{W}),
		\end{equation}
	 $$\text{ where } Q(W)=\{\tilde{W}=(\tilde{w}_1,\tilde{w}_2,\ldots,\tilde{w}_k): w_{u_1}[r_1]= w_{u_2}[r_2] \implies \tilde{w}_{u_1}[r_1]= \tilde{w}_{u_2}[r_2] \text{ and } \tilde{W} \neq W\}.$$ Also observe that for each sentence $W \in \PP_{2,4}(p,p)$, $Q(W) \cap (\PP_{2,4}(p,p) \cup \PP_{2}(p,p))=\phi$ and hence by Proposition \ref{prop: W_k,p size 2}(a),
		$$\frac{1}{N^{p+1}} \# \left( \Pi_{L_n}^*(W) \setminus  \Pi_{L_n}(W) \right)\leq \frac{B^{2p}p^p}{\sqrt{N}}.$$
	The term $p^p$ follows from sum of all terms in multinominal expansion.
\end{remark}

\begin{proof}[Proof of Theorem \ref{thm: always normal}]
We show that when the conditions of the theorem are satisfied,  for sufficiently large $N$,
$$\frac{1}{N}\varA$$ is bounded below by $C^{p_n}$ for a constant $C>0$ and therefore $\eta_{p_n}$ converges to the standard Gaussian distribution.

For even $p$, we consider the sentence $W=(w_1,w_2) \in \mathcal{P}_{2,4}(p,p)$ of the following form: 
\begin{align} \label{eqn: condition non-zero P24}
	w_1[1] &=w_1[p]=w_2[1]=w_2[p], \nonumber \\
	w_1[i]&=w_{1}[p+1-i] \text{ for all } 2 \leq i \leq p \nonumber  \text{ and}\\
	w_2[i]&=w_2[p+1-i] \text{ for all } 2 \leq i \leq p,
\end{align}
where $w_1[i], w_2[i]$ denote the $i$-th letter of $w_1$ and $w_2$, respectively (see Figure \ref{fig:sentence_W}).

 Note that by \eqref{eqn:var=1} it is sufficient to show that $\liminf \frac{1}{N^{p_n+1}}\# \Pi_{L_n,\Delta_n}(W)$ is strictly greater than zero for $W$ given by \eqref{eqn: condition non-zero P24}. Further, by Remark \ref{remark:Pi_Ln,Delta_n-star}, we have that 
$$ \frac{1}{N^{p_n+1}} \# \left(\Pi_{L_n,\Delta_n}^*\left(W\right)  \setminus \Pi_{L_n,\Delta_n}\left(W\right)\right)\leq \frac{
B^{2p_n}p_n^{p_n}}{\sqrt{N}}=o(1)$$
for $p_n=o(\log N/\log \log N)$
and therefore it is sufficient to prove that $\liminf \frac{1}{N^{p_n+1}}\# \Pi_{L_n,\Delta_n}^*(W)$ is strictly greater than zero. We accomplish this by obtaining a lower bound on $\#\Pi_{L_n,\Delta_n}^*\left(W\right)$.

\begin{figure}[h!]
	\includegraphics[height=40mm,width=150mm]{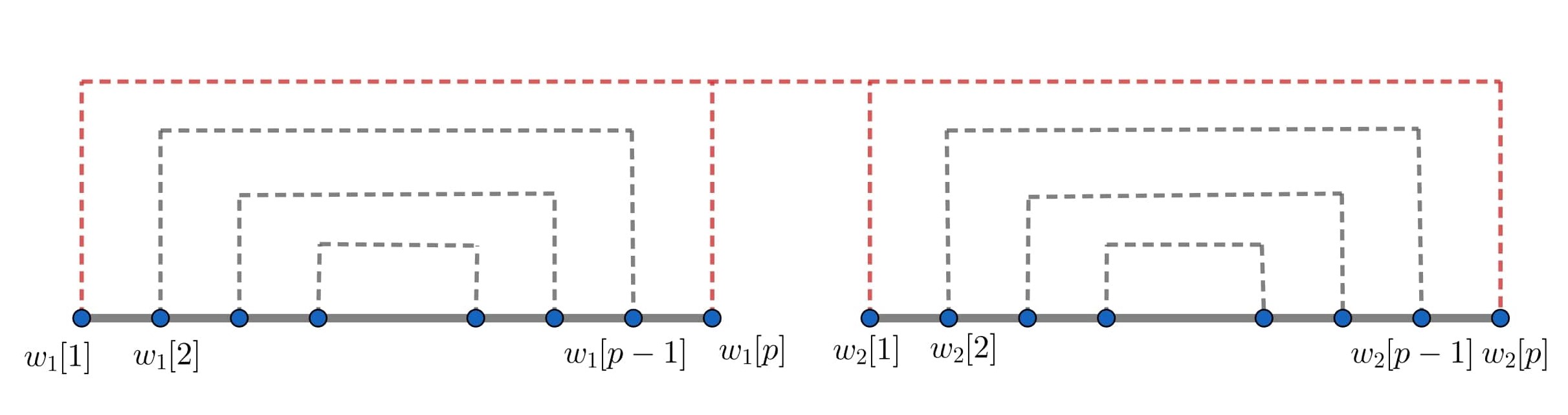}
	\caption{A pictorial representation of the sentence $W=(w_1,w_2)$ obeying (\ref{eqn: condition non-zero P24})}\label{fig:sentence_W}
\end{figure}

Let $n \in \Z_+$ and $p$ even be fixed. Since $\mathcal{S}_n$ is non-empty, there exists some row  $R_i$ such that $\mathcal{S}_n \cap R_i$ is non-empty. It follows from condition $(ii)$ that $\# ( \mathcal{S}_n \cap R_i) \geq c_2N$ and since $\mathcal{S}_n=\mathcal{S}_n^T$, $\# \{j: \mathcal{S}_n \cap R_j \neq \phi\}\geq c_2N$. Choose $\pi_1(0)$ from the set $ \{j: \mathcal{S}_n \cap R_j \neq \phi\}$ and therefore the number of choices for $\pi_1(0)$ is at least $c_2N$. Since $\mathcal{S}_n \cap R_{\pi_1(0)}$ is non-empty, $\mathcal{S}_n \cap R_{\pi_1(0)}$ has at least $c_2N$ elements. Choose $\pi_1(1)$ from the set, so that $\left(\pi_1(0),\pi_1(1)\right) \in \mathcal{S}_n$.

For $1 \leq i \leq p_n/2$, suppose $\pi_1(0),\pi_1(1),\ldots,\pi_1(i-1)$ are chosen such that $\left(\pi_1(j-1),\pi_1(j)\right) \in \mathcal{S}_n$ for all $1 \leq j \leq i-1$.
Since $\left(\pi_1(i-2),\pi_1(i-1)\right) \in \mathcal{S}_n$ and $\mathcal{S}_n=\mathcal{S}_n^t$, we get $\mathcal{S}_n \cap R_{\pi_1(i-1)}$ is non-empty and we choose $\pi_1(i)$ such that $\left(\pi_1(i-1),\pi_1(i)\right) \in \mathcal{S}_n$. Then the number of choices for $\pi_1(i)$ is at least $c_2 N$. For $p_n/2+1 \leq i \leq p_n$, we define $\pi_1(i)=\pi_1(p_n-i)$. Then it follows that for $p_n/2+1 \leq i \leq p$,
\begin{align*}
	w_1[i] &= L_n\left(\pi_1(i-1),\pi_1(i)\right) \\
	&= L_n\left(\pi_1(p_n-i+1),\pi_1(p_n-i)\right) \\
	&= L_n\left(\pi_1(p_n-i),\pi_1(p_n-i+1)\right)\\
	&=w_1[p_n -i+1].
\end{align*}
Furthermore, $\pi_1(p_n)=\pi_1(0)$ and since $A_n$ is symmetric, $(\pi_1(p_n-i+1),\pi_1(p_n-i)) \in \Delta_n$ for all $p_n/2+1 \leq i \leq p_n$. Therefore by construction, $\pi_1 \in \Pi_{L_n,\Delta_n}^*(w_1)$  and the number of choices of $\pi_1$ is bounded below by $(c_2 N)^{p_n/2+1}$.

Next, we calculate the number of choices of $\pi_2$ such that $(\pi_1,\pi_2) \in \Pi_{L_n,\Delta_n}^*(W)$. As we have already chosen $\pi_1$, we have that $r_u:=L_n\left(\pi_1(0),\pi_1(1)\right)$ is already fixed. Choose $\pi_2(0)$ such that $(\pi_2(0),j) \in \mathcal{S}_n$ and $L_n(\pi_2(0),j)=r_u$ for some $j$. By condition $(i)$, the number of choices for $\pi_2(0)$ is bounded below by $c_1 N/B$. We choose $\pi_2(1)$ such that $(\pi_2(0),\pi_2(1)) \in \mathcal{S}_n$ and $L_n(\pi_2(0),\pi_2(1))=r_u$. Note that such a choice for $\pi_2(1)$ always exist.

For $2 \leq i \leq p_n/2$, the letter $w_2[i]$ has not yet appeared. Suppose that $\pi_2(0),\pi_2(1),\ldots , \pi_2(i-1)$ are chosen such that $ \left(\pi_2(j-1),\pi_2(j)\right) \in \mathcal{S}_n$ for all $1 \leq j \leq i-1$. Then the number of choices of $\pi_2(i)$ such that $\left(\pi_2(i-1),\pi_2(i)\right) \in \mathcal{S}_n$ is bounded below by $c_2 N$. For $p_n/2+1 \leq i \leq p_n$, we define $\pi_2(i)=\pi_2(p_n-i)$. 

Therefore the number of choices for $\pi_2$ is bounded below by $c_1N(c_2N)^{p_n/2-1}/B$. It then follows that $\pi_2$ is a circuit and $(\pi_1,\pi_2) \in \Pi_{L_n,\Delta_n}^*(W)$. Thus, $\# \Pi_{L_n,\Delta_n}^*(W) \geq (c_2N)^{p_n/2+1} \times \frac{c_1N}{B}(c_2N)^{p_n/2-1}$. Hence for sufficiently large $N$, 
\begin{equation*}
	\frac{1}{N} \times \varA \geq \frac{1}{2N^{p_n+1}}\# \Pi_{L_n,\Delta_n}^*(W) \geq \frac{1}{2N^{p_n+1}}\frac{c_1N}{B}(c_2N)^{p_n} =\frac{c_1c_2^{p_n}}{2B}\geq \left(\frac{c_1c_2}{2B}\right)^{p_n}.
\end{equation*}
The last inequality follows since $c_1\leq 1$ and $B \geq 1$.
This completes the proof of theorem.
\end{proof}

\section{\large \bf \textbf{Proofs of Propositions \ref{prop: W_k,p size 2} and \ref{lemma: Bpl general}}} \label{sec:proof of Bpl_prop}
\subsection{\large Generating vertices:}
In this section, we introduce the concept of generating vertices and the role they would play in the calculations. First we give a definition of generating vertices. This is a more general version of the concept of generating vertices in \cite{bose_sen_LSD_EJP}.
\begin{definition}\label{defn:generating_vertex}
	\noindent (a) For a sentence $W \in \WW_{p,k}$, any combination $(u,j)$ where $1 \leq u \leq k$ and $0 \leq j \leq p$ is called a \textit{vertex}.
	\vskip2pt
	\noindent (b) For a sentence $W=(w_1,w_2,\ldots,w_k) \in \WW_{p,k}$, a set $A \subset S_W$ and a total ordering $`<$' on $\{(u,j): 1 \leq u \leq k, 0 \leq j \leq p\}$, a vertex $(u,j)$ called a \textit{generating} vertex of $W$ if 
	\begin{enumerate}[(i)]
		\item $j=0$ or
		\item $w_u[j] \notin A$ and $w_\ell[i] \neq w_u[j]$ for all $(\ell,i) < (u,j)$, where $w_u[j]$ denotes the $j$-th letter of $w_u$.
	\end{enumerate}
Note that generating vertices depend on the set $A$ and the ordering $`<$'. If not specified, we choose $A$ as the empty set and $<$ as the dictionary order on $\{(u,j): 1 \leq u \leq k, 0\leq j \leq p\}.$ 
\end{definition}

Recall the graph $G_W=(V_W,E_W)$ for $W \in \WW_{p,k}$, where $G_W$ as in Definition \ref{defn_cluster}. Consider a bijection $\zeta: V_W \rightarrow [k]$. The map $\zeta$ is called a \textit{renumbering} of vertices. We define an ordering with respect to $\zeta$ by $(\ell,i)<(u,j)$ if $(\zeta^{-1}\ell,i)$ is less than $(\zeta^{-1}u,j)$ under the dictionary ordering.

Observe from the definition of generating vertex that corresponding to each letters in $S_W$ that are not present in $A$ there exists a unique generating vertex, and further every generating vertex other than $(u,0)$ is of this form. Therefore given a sentence $W$ and a set $A \subset S_W$, the total number of generating vertices does not depend on the ordering, and we denote the number of generating vertices in $W$ by $F(W,A)$. In particular for a sentence $W \in \WW_{p,k}$ and $A =\phi$, the number of generating vertices in $W$ is $|W|+k$, where $|W|$ denotes the number of distinct letters in $S_W$.

\begin{example}
	Consider the sentence $W=(abca,addc,aedc,ebbd)$.  The set of generating vertices of $W$ is $\{(1,0),(1,1),(1,2),(1,3),(2,0),(2,2),(3,0),(3,2),(4,0)\}$.
\end{example}
Soon, we shall see that the concept of generating vertices play an important role in determining the cardinality of $\Pi_L(W)$. In this regard, the introduction of vertices of the form $(u,0)$ is intuitive, since for each word $w_i$ a circuit is a map from $\{0,1,2,\ldots , p\}$ and to count the number of different possibilities of circuits $\pi_i$, $\pi_i(0)$ also needs to be determined. We first present an example that contains important ideas required to determine the cardinality of $\Pi_L^*(W)$.

\begin{example}\label{eg:card_Pi_l_nosingle}
	Consider the word $w=abab$ and let $L: [N] \rightarrow \Z^d$ be a link function obeying Assumption \ref{Condition B}. Considering $w$ as a sentence in $\WW_{p,1}$, the set of generating vertices of $w$ is $\{(1,0),(1,1),(1,2)\}$. 
	
	To find the cardinality of $\Pi_L^*(w)$, we start counting the possible values of $\pi(i)$, starting from $\pi(0)$, for circuits $\pi \in \Pi_L^*(w)$. 
	As there are no constraints on the value of $\pi(0)$, the number of possible choices for $\pi(0)$ is $N$. Similarly, the number of choices for $\pi(1)$ and $\pi(2)$ are also $N$, as there are no constraints for $\pi(1)$ and $\pi(2)$. Once $\pi(0)$ and $\pi(1)$ are chosen, the condition $w[1]=w[3]$ dictates that $L(\pi(2),\pi(3))$ should be equal to $L(\pi(0),\pi(1))$ which is already fixed. Since $L$ obeys Assumption \ref{Condition B}, the maximum number of possibilities for $\pi(3)$ is bounded by $B$. As $\pi(4)=\pi(0)$, the value of $\pi(4)$ is already determined. Thus we obtain the following bound for the cardinality of $\Pi_L^*(w)$:
	\begin{equation*}
		\#\Pi_L^*(w) \leq N \times N \times N \times B \times 1 =BN^3= BN^{F},
	\end{equation*}
	where $F$ is the number of generating vertices in $w$. Intuition suggests that the same upper bound would hold for any sentence $W$.
\end{example}
Now, we present another example and show that the above bound is not tight and the upper bound can be reduced by an order of $N$.
\begin{example}\label{eg:card_Pi_l_singleelement}
	Consider the word $w=abacb$. Note that the letter $w[4]=c$ appears only once in $w$. We shall exploit this property to show that $\#\Pi_L^*(w) \leq B^pN^{F-1}$.
	
	We start by counting the number of choices for $\pi(0)$ and proceed in increasing order to assign values till $\pi(3)$. From Example \ref{eg:card_Pi_l_nosingle} it is clear that the number of choices for $\pi(0),\pi(1), \pi(2), \pi(3)$ is $N^3 \times B$. Now we move to the end, and assign value to $\pi(5)$. As $\pi(5)=\pi(0)$, there is only one choice for $\pi(5)$. Further, note that $L(\pi(4),\pi(5))=L(\pi(1),\pi(2))$ and the latter quantity is already fixed. Therefore, by Assumption \ref{Condition B}, the number of choices for  $\pi(4)$ is bounded above by $B$. Hence 
	\begin{equation*}
		\#\Pi_L^*(w) \leq N^3B \times B \times 1 = B^2 N^{F-1}.
	\end{equation*}
\end{example}
\noindent
In the next lemma, we extend the reasoning of above examples for arbitrary sentences.

\begin{lemma}\label{lemma:choices pi t}
	Let $L: [N] \rightarrow \Z^d$ be a link function obeying Assumption \ref{Condition B}. For $k \in \Z_+$ and $W \in \WW_{p,k}$, let $\pi_1,\pi_2,\ldots , \pi_{t-1}:\{0,1,2,\ldots,p\} \rightarrow [N]$ be  circuits such that $(\pi_1,\pi_2,\ldots , \pi_{t-1}) \in \Pi_L^*(w_1,w_2,\ldots,w_{t-1})$. Suppose $Q_t=\# \{\pi_t: (\pi_1,\pi_2,\ldots , \pi_{t}) \in \Pi_L^*(w_1,w_2,\ldots,w_{t})\}$. Then
	\begin{align*}
		Q_t
		\leq \begin{cases}
		B^{p}N^{F_t-1} &\text{ if } \exists \  a \in S_{w_t} \text{ such that } m_{S_{w_t}}(a)=1 \text{ and } m_{S_{w_\ell}}(a)=0 \text{ for all } \ell<t,\\
			B^{p}N^{F_t} &\text{ otherwise},
		\end{cases}
	\end{align*}
	where $F_t$ is the number of generating vertices of $w_t$ and $m_{S_{w_\ell}}$ denotes the multiplicity function of the multiset $S_{w_\ell}$.
\end{lemma}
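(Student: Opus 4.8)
The plan is to fix the circuits $\pi_1,\ldots,\pi_{t-1}\in\Pi_L^*(w_1,\ldots,w_{t-1})$ and to estimate $Q_t$ by revealing the values $\pi_t(0),\pi_t(1),\ldots,\pi_t(p)$ one vertex at a time, in a processing order chosen according to the structure of $w_t$, and counting the possibilities at each step. Two elementary observations drive the count. First, once $\pi_t(0)$ has been revealed, $\pi_t(p)=\pi_t(0)$ is forced, so that vertex contributes a single possibility. Second, when we are about to reveal $\pi_t(j)$, if one of the two edges incident to $\pi_t(j)$ carries a letter that has already occurred on a fully revealed edge --- an edge of some $\pi_s$ with $s<t$, or an edge of $\pi_t$ both of whose endpoints were revealed at an earlier step --- then the defining condition of $\Pi_L^*$ forces the $L_n$-value of that incident edge; since its other endpoint is already known, the symmetry of $L_n$ together with Assumption \ref{Condition B} leaves at most $B$ possibilities for $\pi_t(j)$. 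Otherwise we use the trivial bound $N$. Call a revealed vertex \emph{free} when it falls into this latter regime; then $Q_t\le B^{p+1}N^{(\#\text{free vertices})}$ regardless of the order, and the lemma reduces to counting free vertices in a good order.

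First I would prove $Q_t\le B^{p+1}N^{F_t}$ unconditionally, by revealing the vertices in increasing order $\pi_t(0),\pi_t(1),\ldots,\pi_t(p)$. An easy induction shows that $\pi_t(j)$ with $1\le j\le p-1$ is free precisely when $w_t[j]$ occurs neither in $w_1,\ldots,w_{t-1}$ nor among $w_t[1],\ldots,w_t[j-1]$, i.e.\ precisely when $(t,j)$ is a generating vertex of $w_t$ (dictionary order, $A=\emptyset$); so the free vertices are exactly the generating vertices of $w_t$ other than possibly $(t,p)$ (which is forced and hence never free), and there are at most $F_t$ of them. This already yields the ``otherwise'' bound of the lemma.

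Now suppose the hypothesis holds: some letter $a$ occurs exactly once in $w_t$, at a position $j_1$, and in none of $w_1,\ldots,w_{t-1}$; take $j_1$ to be the \emph{largest} such position. If $j_1=p$, the increasing-order pass already gives $Q_t\le B^{p+1}N^{F_t-1}$, because $(t,p)$ is now a generating vertex but the vertex $\pi_t(p)$ is forced, so one of the $F_t$ candidates is not actually free. If $j_1<p$, I would instead reveal $\pi_t(0)$, then $\pi_t(1),\ldots,\pi_t(j_1-1)$ in increasing order, then $\pi_t(p),\pi_t(p-1),\ldots,\pi_t(j_1+1)$ in decreasing order, and finally $\pi_t(j_1)$. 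Saving the singleton position for last makes $\pi_t(j_1)$ not free: at that stage every edge except the two incident to $\pi_t(j_1)$ is fully revealed; the incident edge bearing the letter $a$ imposes no constraint (that letter occurs nowhere else), while the incident edge $\{\pi_t(j_1),\pi_t(j_1+1)\}$ bears $w_t[j_1+1]$, which by maximality of $j_1$ is not an ``occurs-once-and-new'' letter and hence occurs on another, already revealed, edge, leaving at most $B$ possibilities. A short bookkeeping argument then shows that in this order every letter of $w_t$ absent from $w_1,\ldots,w_{t-1}$ other than $a$ is responsible for at most one free vertex --- in the increasing prefix pass only at the site of its leftmost occurrence and only if that site lies in $\{1,\ldots,j_1-1\}$, in the decreasing suffix pass only at the site immediately to the left of its rightmost occurrence and only if all its occurrences lie in $\{j_1+1,\ldots,p\}$, and these two possibilities are mutually exclusive. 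Hence the number of free vertices is at most $1+(D-1)=F_t-1$, where $D$ is the number of distinct letters of $w_t$ not appearing in $w_1,\ldots,w_{t-1}$ (so that $F_t=D+1$), which is the asserted bound.

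The only genuinely delicate point is this last bookkeeping step, i.e.\ ruling out that a single letter collects two free vertices. It rests on the principle that the first time (in processing order) an occurrence of a letter is used to choose a vertex, that occurrence's edge becomes fully revealed, so every later occurrence of the same letter forces the at-most-$B$ regime; applied across the prefix and suffix passes this gives the mutual exclusivity above, and applied within the suffix pass it prevents double counting there. Everything else is routine verification, matching the pattern of Examples \ref{eg:card_Pi_l_nosingle} and \ref{eg:card_Pi_l_singleelement}.
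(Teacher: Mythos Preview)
Your proposal is correct and follows essentially the same approach as the paper's proof: an increasing reveal for the unconditional bound, and for the sharper bound an increasing-then-decreasing reveal pivoted at the \emph{last} new singleton position $j_1$ (the paper's $i$), with the maximality of $j_1$ guaranteeing that $w_t[j_1+1]$ already has a fixed $L$-value when $\pi_t(j_1)$ is reached. Your bookkeeping that each new letter other than $a$ is responsible for at most one free vertex, via mutually exclusive contributions from the prefix and suffix passes, is exactly the counting the paper carries out (somewhat more tersely).
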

\begin{proof}
	We first consider the case where $S_{w_t}$ does not contain any letter that appears only once in $S_{w_t}$. We start by assigning values for $\pi_t(0)$ and proceed in ascending order till $\pi_t(p)$. Note that the number of possibilities of $\pi_t(0)$ is $N$. Further, by the logic followed in Example \ref{eg:card_Pi_l_nosingle} it is clear that if $w_t[j]$ is the first appearance of a letter in the sentence $W$, then there is no prior constraint involving $L(\pi_t(j-1),\pi_t(j))$, and therefore the number of choices for $\pi_t(j)$ is $N$. Further if the letter $w_t[j]$ has already appeared, then note that the numerical value of $L(\pi_t(j-1),\pi_t(j))$ is already determined by the choice of $\pi_1,\pi_2,\ldots,\pi_{t-1} $ and $\pi_t(0),\pi_t(1),\ldots, \pi_t(j-1)$. Therefore, due to Assumption \ref{Condition B}, the number of values for $\pi_t(j)$ is bounded above by $B$. It follows from here that 
	\begin{align*}
		Q_t&\leq N^{F_t}B^{(p+1)-F_t} \leq  N^{F_t}B^{p}.
	\end{align*}
	
	Now suppose $S_{w_t}$ contains a new letter that is repeated only once in $S_{w_t}$. Let $x=w_t[i]$ be the last letter such that $x \notin S_{w_\ell}$ for all $\ell < t$ and $m_{S_{w_t}}(x)=1$.
	
	We count the number of choices for $\pi_t$ in the following way: We start by counting the number of choices for $\pi_t(0)$. Note that $\pi_t(0)$ has $N$ choices. Next for $j<i$, suppose the value of $\pi_t(0),\pi_t(1),\ldots , \pi_t(j-1)$ are chosen. If $(t,j)$ is a generating vertex, then note that the value of $L(\pi_t(j-1),\pi_t(j))$ is not fixed and therefore the number of choices for $\pi_t(j)$ is $N$. If $(t,j)$ is not a generating vertex, then by Assumption \ref{Condition B}, the number of choices for $\pi_t(j)$ is bounded above by $B$. 
	
	Once all $\pi_t(j)$ till $j \leq i-1$ are determined, we assign values to $\pi_t(j)$ starting from $\pi_t(p)$ and proceeding in the decreasing order of $j$ till $\pi_t(i)$. Note that the number of choices for $\pi_t(p)$ is 1, as $\pi_t(p)=\pi_t(0)$. Suppose $\pi_t(p),\pi_t(p-1),\ldots, \pi_t(p-j+1)$ are chosen for some $0 \leq j \leq p-i$ and the letter $w_t[p-j+1]$ has not appeared yet. Then it follows that the value of $L(\pi_t(p-j),\pi_t(p-j+1))$ is not fixed and therefore $\pi_t(p-j)$ has $N$ choices. And like in the previous case, if the letter $w_t[p-j+1]$ has already appeared, then the value of $L(\pi_t(p-j),\pi_t(p-j+1))$ is already fixed and therefore due to Assumption \ref{Condition B}, $\pi_t(p-j)$ has at most $B$ choices. As a result, the value of $\pi_t(i)$ is determined by the value of other $\pi_t(j)$, even though $(t,i)$ is a generating vertex.
	Note that by this construction, for all generating vertices other than $(t,i)$, there exist a component ($\pi_t(j)$ or $\pi_t(j-1)$ depending on whether $j < i$ or $j>i$) which has $N$ choices and the number of choices for all other $\pi_t(j)$ is bounded above by $B$. Hence, $Q_t$ is bounded above by $B^{p}N^{F_t-1}$. This completes the proof.
\end{proof}

Recall the notion of renumbering of vertices and ordering with respect to renumbering from Definition \ref{defn:generating_vertex}. The following lemma is an immediate generalization of Proposition
 \ref{lemma: Bpl general} and would follow from a similar argument.
\begin{lemma}\label{lemma:choice of pi_t reordered}
	Let $L: [N] \rightarrow \Z^d$ be a link function obeying Assumption \ref{Condition B}. For $k \in \Z_+$ and $W \in \mathcal{W}_{p,k}$, let $\zeta:V_W \rightarrow [k]$ be a bijection and let $\pi_{\zeta^{-1}(1)},\pi_{\zeta^{-1}(2)},\ldots , \pi_{\zeta^{-1}(t-1)}: \{0,1,2,\ldots,p\} \rightarrow [N]$ 
	be  circuits such that $$(\pi_{\zeta^{-1}(1)},\pi_{\zeta^{-1}(2)},\ldots , \pi_{\zeta^{-1}(t-1)}) \in \Pi_L^*(w_{\zeta^{-1}(1)},w_{\zeta^{-1}(2)},\ldots,w_{\zeta^{-1}(t-1)}).$$ Suppose $Q_t=\# \{\pi_{\zeta^{-1}(t)}: (\pi_{\zeta^{-1}(1)},\pi_{\zeta^{-1}(2)},\ldots , \pi_{\zeta^{-1}(t)}) \in \Pi_L^*(w_{\zeta^{-1}(1)},w_{\zeta^{-1}(2)},\ldots,w_{\zeta^{-1}(t)})\}$. Then
	\begin{align*}
		Q_t
		\leq \begin{cases}
			B^{p}N^{F_{\zeta^{-1}(t)}-1} &\hspace{-2mm}\text{if } \exists \  a \in S_{w_t}: m_{S_{w_{\zeta^{-1}(t)}}}(a)=1,  m_{S_{w_{\zeta^{-1}(\ell)}}}(a)=0 \ \forall \ \zeta^{-1}(\ell)<\zeta^{-1}(t),\\
			B^{p}N^{F_{\zeta^{-1}(t)}} &\hspace{-2mm}\text{otherwise},
		\end{cases}
	\end{align*}
	where $F_{\zeta^{-1}(t)}$ is the number of generating vertices in $w_{\zeta^{-1}(t)}$.
\end{lemma}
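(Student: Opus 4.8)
The plan is to obtain this lemma as a relabelled instance of Lemma~\ref{lemma:choices pi t}, since nothing in the proof of that lemma used the fact that the words were processed in their original order rather than in some other fixed order. Given the bijection $\zeta:V_W\to[k]$, I would first pass to the reordered sentence $W^{\zeta}=(w_{\zeta^{-1}(1)},w_{\zeta^{-1}(2)},\ldots,w_{\zeta^{-1}(k)})$ and record two observations. First, permuting the words of a sentence does not affect the structure of $\Pi_L^*$: the defining implication of $\Pi_L^*$, namely $w_r[i]=w_s[j]\Rightarrow L(\pi_r(i-1),\pi_r(i))=L(\pi_s(j-1),\pi_s(j))$, is symmetric in the word-indices, so reordering the words only permutes the coordinates of a tuple $(\pi_1,\ldots,\pi_k)$ and carries $\Pi_L^*(W)$ bijectively onto $\Pi_L^*(W^{\zeta})$, compatibly with the truncations to the first $t-1$ and the first $t$ words. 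Second, by construction the ordering with respect to $\zeta$ from Definition~\ref{defn:generating_vertex} (write it $<_\zeta$) is the pullback of the dictionary order under this relabelling, so $(t,j)$ is a generating vertex of $w_{\zeta^{-1}(t)}$ inside $W$ with respect to $<_\zeta$ exactly when position $j$ is a generating position of the $t$-th word of $W^{\zeta}$ in the sense used in Lemma~\ref{lemma:choices pi t}; in particular $F_{\zeta^{-1}(t)}$ equals the quantity $F_t$ of that lemma applied to $W^{\zeta}$.

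With these two observations the statement reduces to applying Lemma~\ref{lemma:choices pi t} to $W^{\zeta}$ with the index $t$: the hypothesis of the first case, namely existence of $a$ with $m_{S_{w_{\zeta^{-1}(t)}}}(a)=1$ and $m_{S_{w_{\zeta^{-1}(\ell)}}}(a)=0$ for all $\ell<t$, is literally the single-occurrence hypothesis of that lemma read for the sentence $W^{\zeta}$, so the two bounds $Q_t\le B^{p+1}N^{F_{\zeta^{-1}(t)}-1}$ and $Q_t\le B^{p+1}N^{F_{\zeta^{-1}(t)}}$ follow at once. If instead one prefers a self-contained argument, I would simply replay the proof of Lemma~\ref{lemma:choices pi t}: fixing $\pi_{\zeta^{-1}(1)},\ldots,\pi_{\zeta^{-1}(t-1)}$, assign $\pi_{\zeta^{-1}(t)}(0),\pi_{\zeta^{-1}(t)}(1),\ldots$ in increasing order of position, allowing at most $N$ options at each generating vertex (no constraint yet on the $L$-value of that step) and at most $B$ options at each non-generating vertex (the $L$-value is already pinned down, so Assumption~\ref{Condition B} applies), which gives $Q_t\le N^{F_{\zeta^{-1}(t)}}B^{(p+1)-F_{\zeta^{-1}(t)}}\le B^{p+1}N^{F_{\zeta^{-1}(t)}}$; and to save the extra factor $N$ when some letter occurs exactly once in $w_{\zeta^{-1}(t)}$ and in no earlier word, I would take the last such position $i$, fill $\pi_{\zeta^{-1}(t)}(0),\ldots,\pi_{\zeta^{-1}(t)}(i-1)$ forward and $\pi_{\zeta^{-1}(t)}(p),\ldots,\pi_{\zeta^{-1}(t)}(i)$ backward using $\pi_{\zeta^{-1}(t)}(p)=\pi_{\zeta^{-1}(t)}(0)$, so that the value at position $i$ is forced by an already assigned neighbour even though $(\zeta^{-1}(t),i)$ is a generating vertex, exactly as in Example~\ref{eg:card_Pi_l_singleelement}.

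I do not expect a genuine obstacle here; the one point that needs care is the bookkeeping in the first step, namely checking that "generating vertex with respect to $<_\zeta$" transports faithfully under $W\mapsto W^{\zeta}$ and that the single-occurrence hypothesis in the statement is to be read relative to the $\zeta$-order (so that the "$S_{w_t}$" written in the first case should be understood as $S_{w_{\zeta^{-1}(t)}}$). Once these conventions are pinned down, the lemma is an immediate corollary of Lemma~\ref{lemma:choices pi t} and introduces no new ideas.
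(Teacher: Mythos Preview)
Your proposal is correct and matches the paper's approach exactly: the paper states that this lemma ``is an immediate generalization [of Lemma~\ref{lemma:choices pi t}] and would follow from a similar argument,'' giving no further details. Your reduction via the relabelled sentence $W^{\zeta}$ (or the direct replay of the forward/backward filling argument) is precisely what the paper has in mind, and your remark about reading $S_{w_t}$ as $S_{w_{\zeta^{-1}(t)}}$ correctly identifies the one notational slip in the statement.
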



\subsection{\large Proof of Propositions \ref{prop: W_k,p size 2}:}
We prove Proposition \ref{prop: W_k,p size 2} for parts (a) and (b) separately. First, we prove part (a).
\begin{proof}[Proof of Proposition \ref{prop: W_k,p size 2}(a)]
	First consider $W=(w_1,w_2) \in  \mathcal{P}_2(p,p)$. Let $a \in S_{w_1} \cap S_{w_2}$ be a cross-matched letter. Note that $a$ appears only once in $S_{w_1}$ and thus by Lemma \ref{lemma:choices pi t}, it follows that the number of choices for $\pi_1$ is bounded above by $B^p N^{F_1-1}$, where $F_1$ is the number of generating vertices of $w_1$. Similarly, the number of choices for $\pi_2$ is bounded above by $B^p N^{F_2}$, where $F_2$ is the number of generating vertices of $w_2$. Thus we get
	\begin{equation*}
		\frac{1}{N^{p+1}}\# \Pi_{L_n}^*(W) \leq  \frac{1}{N^{p+1}} B^{2p} N^{F_1+F_2-1} = B^{2p}.
	\end{equation*}
	Here the last equality in the above expression follows from the fact that $F_1+F_2=|W|+2$, where $|W|$ is the number of distinct letters in $S_W$, which is $p$.
	
	Now for $W=(w_1,w_2)\in \mathcal{P}_{2,4}(p,p)$, note that each letter in $S_{w_i}$ is repeated exactly twice in $S_{w_i}$ and therefore from Lemma \ref{lemma:choices pi t}, we get that the number of choices for  $\pi_1$ is at most $B^pN^{F_1}$ and the number of choices for  $\pi_2$ is at most $B^pN^{F_2}$.
	Thus, it follows that for all $W \in \PP_{2,4}(p,p)$,
	\begin{equation*}
		\frac{1}{N^{p+1}}\# \Pi_{L_n}^*(W) \leq \frac{1}{N^{p+1}}B^{2p}N^{F_1+F_2}=B^{2p},
	\end{equation*}
	where the last equality follows as the number of distinct letters in $S_{W}$ is $(p-1)$.
	
	Next, consider $W=(w_1,w_2)\in \WW_{p,2} \setminus \PP_{2}(p,p)$ with at least one cross-matched letter, each letter in $S_W$ appearing at least twice and $\# \Pi_{L_n}^*(W) > B^{2p}N^{p+1/2}$. We shall show that this happens only if $W \in \mathcal{P}_{2,4}(p,p)$.
	
	Since each letter in $S_W$ is repeated at least twice, we get that $|W| \leq p$. It follows from Lemma \ref{lemma:choices pi t} that the number of elements in $\Pi_{L_n}^*(W)$ is at most $B^{2p}N^{|W|+2}$. Thus $\# \Pi_{L_n}^*(W) > B^{2p}N^{p+1/2}$ only if $|W| \geq p-1$. Hence, the only possibilities are $|W| \in \{p-1,p\}$.
	
	Suppose $|W|=p$, then since each letter in $S_W$ appears at least twice and the condition that $W$ has at least one cross-matched element, implies that $W \in \mathcal{P}_{2}(p,p)$, a contradiction.
	
	Now, let $|W|=p-1$. Suppose there exists a cross-matched letter $a \in S_{W}$ such that $a$ is repeated only once in $S_{w_i}$ for $i=1$ or $2$. Note that if $a$ appears only once in $S_{w_1}$, then by Lemma \ref{lemma:choices pi t}, it follows that the number of choices for $\pi_1$ is at most $B^pN^{F_1-1}$ and the number of choices for $\pi_2$ is at most $B^pN^{F_2}$. In the case if $a$ appears more than once in $S_{w_1}$ and only once in $S_{w_2}$, then from Lemma \ref{lemma:choice of pi_t reordered} the number of choices for $\pi_2$ is at most $B^pN^{F_2-1}$ and consequently, $\#\Pi_{L_n}^*(W) \leq B^{2p}N^{|W|+1}= B^{2p}N^{p}$ for both the cases. Thus for $|W|=p-1$, $\#\Pi_{L_n}^*(W) > B^{2p}N^{p+1/2}$, only if each cross-matched element is repeated at least four times, twice in both $S_{w_1}$ and $S_{w_2}$. Let $r$ be the number of cross-matched elements in $W$. Then we have 
	\begin{align*}
		4r+ 2(p-1-r) \leq 2p \implies r=1.
	\end{align*}
	This implies that $W \in \mathcal{P}_{2,4}(p,p)$ and this completes the proof for part (a).
\end{proof}
\begin{proof}[Proof of Proposition \ref{prop: W_k,p size 2}(b)]
	Let $W=(w_1,w_2,\ldots, w_k) \in \WW_{p,k}$ be a clique sentence with the letter $a \in S_W$ repeating exactly $k$ times. Then the number of distinct letters in $W$ is $|W|=\frac{pk-k}{2}+1$. From the definition of clique sentence it follows that $a$ appears only once in $w_1$ and therefore by an application of Lemma \ref{lemma:choices pi t}, the cardinality of $\Pi_{L_n}^*(W)$ is bounded above by $B^{pk}N^{|W|+k-1}$. Therefore
	\begin{equation*}
		\frac{1}{N^{\frac{pk+k}{2}}}\# \Pi_{L_n}^*(W) \leq \frac{1}{N^{\frac{pk+k}{2}}}B^{pk}N^{|W|+k-1} =B^{pk}.
	\end{equation*}
	Since $\#\Pi_{L_n}(W) \leq \#\Pi_{L_n}^*(W)$ and this completes the proof.
\end{proof}
\subsection{\large Proof of Proposition   \ref{lemma: Bpl general}:}

In this section, we prove Proposition \ref{lemma: Bpl general}. We first make some preliminary observations. 


Note that since $\E x_i=0$, it follows from standard calculations that for non-zero expectation, each letter must appear at least twice in $S_W$. Thus, we focus on sentences of that form and for each such  sentence, the number of distinct letters is at most $pk/2$. As a result, from Lemma \ref{lemma:choices pi t} we get that the cardinality of $\Pi_L^*(W)$ is at most $B^{pk}N^{\frac{pk}{2}+k}$.

For a sentence $W$ such that each letter appears at least twice and $G_W$ is connected, $G_W$ has at least $k-1$ edges. Consider an edge $\{i,j\} \in E_W$ and a cross-matched letter, say $a$, belonging to  $S_{w_i} \cap S_{w_j}$. Lemma \ref{lemma:choices pi t} says that if $a$ is a new letter that appears only once, then the degree of freedom for choosing $\pi_t$ reduces by $1$. Otherwise, $a$ must appear at least thrice in $S_W$, in vague terms, leading to a loss of half a degree of freedom. Thus, even in the worst case, we can expect a loss of $(k-1)/2$ degrees of freedom. We shall prove Proposition \ref{lemma: Bpl general} by showing that there is a loss of at least $(k+1)/2$ degrees of freedom whenever $W$ is not a clique sentence.

\vskip3pt
\noindent\textbf{Outline of the proof:} For better readability, we divide the proof of Proposition \ref{lemma: Bpl general} 
into three steps as given below.
\vskip3pt
\noindent\textbf{Step I:}
The discussion at the starting of this subsection highlights the role of cross-matched letters in proving Proposition \ref{lemma: Bpl general}. With this cue, we first consider one of the simplest cases possible, where there is a choice of distinct cross-matched letters for each edge in a spanning tree of $G_W$.
\begin{definition}\label{defn:C_k,p}
	For $k \geq 3$ and $ p \geq 1$, consider a sentence $W \in \WW_{p,k}$ such that $G_W$ is connected. We say $W \in \mathcal{C}_{p,k}$ if 
	\begin{enumerate}[(i)]
		\item each letter in $S_W$ appears at least twice in $S_W$, where $S_W$ is defined in Definition \ref{defn:SW}.
		\item $G_W$ has a spanning tree $T_W$, such that there exists a one-one function $\varphi: E(T_W) \rightarrow S_W$ with the property $\varphi(\{i,j\}) \in S_{w_i} \cap S_{w_j}$ for all $\{i,j\} \in E(T_W)$.
	\end{enumerate}
\end{definition}

We first show that for each $W \in \mathcal{C}_{p,k}$, the conclusion of Proposition \ref{lemma: Bpl general} holds (see Lemma \ref{prop:Pi C_k,p}). This is obtained by choosing a reordering of vertices such that there is maximum loss of degree of freedom. 

\vskip3pt
\noindent\textbf{Step II:} For $W \in \WW_{p,k}$, we consider a particular type of subgraph $H_W$ of $G_W$, which corresponds to the case where each connected component of $H_W$ is a tree which satisfies condition (iii) of Definition \ref{defn:C_k,p}, along with the additional conditions given in Definition \ref{defn:Type I subgraph}. 
\begin{definition}\label{defn:Type I subgraph}
	For a sentence $W=(w_1,w_2,\ldots, w_k)\in \WW_{p,k}$ with $m_{S_W}(a) \geq 2$ for all $a \in S_W$, a subgraph $ H_W \subset G_W$ with connected components $T_1,T_2,\ldots , T_r$ is called a \textit{Type I subgraph}, if 
	\begin{enumerate}[(i)]
		\item $V(H_W)=\{1,2,\ldots , k\}$ and $H_W$ is a forest,
		\item $\#V(T_1) \geq 3$ and if $\# V(T_j)=1$ for some $j$, then $\# V(T_i)=1$ for all $i >j$,
		\item for all $a \in S_{W_{T_m}}$, $1 \leq m \leq r$, $a$ appears at least twice in $\bigcup_{i \leq m} S_{W_{T_i}}$, 
		\item there exists a one-one map $\varphi: E(H_W) \rightarrow S_{W}$ such that $\varphi(\{i,j\}) \in S_{w_i} \cap S_{w_j}$ for each $\{i,j\} \in E(H_W)$.
	\end{enumerate}
\end{definition}
Pictorially, Type I subgraph can be represented as shown in Figure \ref{fig:Type1sub}.
\begin{figure}[h!]
	\includegraphics[height=42mm,width=150mm]{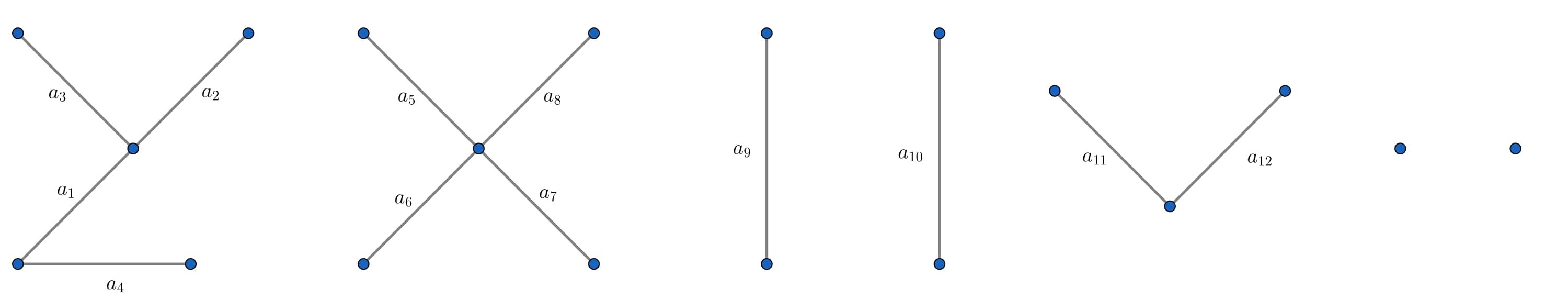}
	\caption{Example of a Type I subgraph with $a_e=\varphi(e)$ as distinct letters.} \label{fig:Type1sub}  
\end{figure}

In Lemma \ref{prop:existence_Type1subg}, we show that any $W \in \WW_{p,k}; k \geq 3$ such that $G_W$ is connected but $W$ is not a special sentence, has a Type I subgraph.
\vskip3pt
\noindent\textbf{Step III:} Finally, to prove Proposition \ref{lemma: Bpl general}, we show that if a sentence $W$ has a Type I subgraph, then  
\begin{equation*}
	\frac{1}{N^{\frac{pk+k}{2}}} \#\Pi_{L_n}^*(W) \leq \frac{B^{pk}}{\sqrt{N}}.
\end{equation*}
Combining Step III with Step II implies Proposition \ref{lemma: Bpl general}.
\vskip3pt
Now, we move on to the detailed proof:
\vskip3pt
\noindent\textbf{Step I:}
Before proceeding to this step in detail, we recall some terminologies from graph theory. For a finite graph $G=(V,E)$ and a vertex $v \in V$, the \textit{eccentricity} of $v$ is defined as $g[v]=\sup \{\operatorname{dist}(i,v):i \in V\}$, where $\operatorname{dist}(i,v)$ is the length of the shortest path from $i$ to $v$ through the edges of $G$. The \textit{center} of $G$ is defined as the set $$\mathbb{O}(G)=\{v \in V:g[v] \leq g[i] \,\, \forall \ i  \in V\}.$$ For a tree $T$, it is known that $\# \mathbb{O}(T) \leq 2$ and if $T$ has more than two vertices, then for each $o \in \mathbb{O}(T)$, $\operatorname{deg}(o) \geq 2$. 

Recall the notion of renumbering of vertices in Definition \ref{defn:generating_vertex}.
Let $W \in \CC_{p,k}$ such that $T_W$ is a spanning tree of $G_W$, $\varphi: E(T_W) \rightarrow S_W$ is a one-one map obeying condition (i) of Definition \ref{defn:C_k,p}, and let $\zeta$ be a renumbering of vertices obeying the following condition:
\begin{equation}\label{eqn: condition zeta}
	\operatorname{dist}\left(\mathbb{O}(T_W),j\right) < \operatorname{dist}\left(\mathbb{O}(T_W),i\right) \implies \zeta(i)<\zeta(j).
\end{equation}
Recall the multiset $S_W$ defined in Definition \ref{defn:multiset} and let $m_{S_W}$ and $m_{S_{w_i}}$ be the multiplicity functions of $S_W$ and $S_{w_i}$, respectively. For an edge $\{i,j\} \in E(T_W)$, we define 
\begin{equation}\label{eqn: defn ai,j}
	a_{i,j}:= \varphi(\{i,j\}) \text{ where } \zeta(i) <\zeta(j).
\end{equation}

Note that in the definition of $a_{i,j}$ the ordering of $i,j$ is important and this distinction will play an important role in the proofs later. We remark that for any finite graph $G$, there always exists a renumbering $\zeta$ of vertices obeying (\ref{eqn: condition zeta}).

For a set $A \subset S_W$ and a renumbering $\zeta$, consider the following sets.
\begin{align}\label{eqn: C A}
	&\mathcal{C}_A= \{a_{i,j}: a_{i,j} \notin S_{w_\ell} \,\, \forall \ \zeta(\ell)<\zeta(i), a_{i,j} \notin A \text{ and } m_{S_{w_i}}(a_{i,j})=1\}, \nonumber\\
	&\widetilde{\mathcal{C}}_A= \operatorname{Im}(\varphi) \setminus \mathcal{C}_A.
\end{align}
Note that the sets $\mathcal{C}_A$ and $\widetilde{\mathcal{C}}_A$ depend on the renumbering $\zeta$, but we are suppressing $\zeta$ for notational convenience. 
From the definition of $\widetilde{\mathcal{C}}_A$, it follows that each letter $ a_{i,j} \in \widetilde{\mathcal{C}}_A$ such that $a_{i,j} \notin A$ appears at least thrice in $S_W$.

The following lemma provides the contribution of the sentences in $\CC_{p,k}$ for $k\geq 3$,  where $\CC_{p,k}$ as in Definition \ref{defn:C_k,p}.
\begin{lemma}\label{prop:Pi C_k,p}
	For $p \geq 1$ and $ k \geq 3$, let $W \in \mathcal{C}_{p,k}$ and $\{L_n\}$ be a sequence of link functions obeying Assumption \ref{Condition B}. Then
	\begin{equation*}
		\frac{1}{N^{\frac{pk+k}{2}}} \#\Pi^*_{L_n}(W) \leq \frac{B^{pk}}{\sqrt{N}}.
	\end{equation*}
\end{lemma}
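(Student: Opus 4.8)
The plan is to estimate $\#\Pi^*_{L_n}(W)$ by constructing the $k$ circuits one word at a time, reading the words in an order adapted to the tree $T_W$, and to show that the two defining features of $\mathcal C_{p,k}$ --- a spanning tree carrying $k-1$ \emph{distinct} bridge letters, and every letter occurring at least twice --- force the number of free choices to fall strictly below $N^{(pk+k)/2}$.

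First I would fix a spanning tree $T_W$, an injection $\varphi\colon E(T_W)\to S_W$ as in Definition \ref{defn:C_k,p}, and a renumbering $\zeta$ of $V_W$ satisfying (\ref{eqn: condition zeta}); this roots $T_W$ at its eccentricity center and pins down the reading order of the words. Feeding the words one by one into Lemma \ref{lemma:choices pi t} (in the order induced by $\zeta$) gives
\[
  \#\Pi^*_{L_n}(W)\ \le\ B^{(p+1)k}\, N^{\,|W|+k-R},
\]
where $R$ counts the words that, when read, contain a letter occurring exactly once in that word and in no earlier word. Since every letter of $S_W$ occurs at least twice, $|W|\le pk/2$, so this is $O(N^{pk/2+k})=O(N^{(pk+k)/2+k/2})$; hence it suffices to prove that the exponent $|W|+k-R$ is strictly less than $(pk+k)/2$.

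Next I would use the rooted tree. Every vertex $u$ other than the $\zeta$-maximal one has a unique parent edge $e_u$, and $u\mapsto e_u$ is a bijection onto $E(T_W)$; moreover the parent of $u$ is read after $u$ (by (\ref{eqn: condition zeta})), so the bridge letter $a_{e_u}=\varphi(e_u)\in S_{w_u}\cap S_{w_{\operatorname{par}(u)}}$ either (i) is new when $w_u$ is read and occurs exactly once in $w_u$ --- so $w_u$ contributes to $R$, and these contributions are pairwise distinct because $u\mapsto e_u$ is injective, which is precisely $e_u\in\mathcal C_{\emptyset}$ in the notation of (\ref{eqn: C A}) --- or (ii) occurs at least three times in $S_W$, i.e.\ $e_u\in\widetilde{\mathcal C}_{\emptyset}$. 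Thus $R\ge\#\mathcal C_{\emptyset}$, while injectivity of $\varphi$ makes the $\widetilde{\mathcal C}_{\emptyset}$-letters pairwise distinct of multiplicity $\ge 3$, whence $2|W|\le pk-\#\widetilde{\mathcal C}_{\emptyset}=pk-(k-1-\#\mathcal C_{\emptyset})$. Combining,
\[
  |W|+k-R\ \le\ \frac{pk-(k-1)+\#\mathcal C_{\emptyset}}{2}+k-\#\mathcal C_{\emptyset}\ =\ \frac{pk+k}{2}+\frac{1-\#\mathcal C_{\emptyset}}{2},
\]
which is $<(pk+k)/2$ as soon as $\#\mathcal C_{\emptyset}\ge 2$.

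The main obstacle is the boundary case $\#\mathcal C_{\emptyset}\in\{0,1\}$: there the crude per-word count above falls short by at most half a degree of freedom, and one must recover it from finer structure that Lemma \ref{lemma:choices pi t} does not see. When $\mathcal C_{\emptyset}$ is nearly empty almost every tree edge is forced into $\widetilde{\mathcal C}_{\emptyset}$, and this slack cannot be perfectly balanced --- some bridge letter must then have multiplicity $\ge 4$, or the first word read, being a leaf all of whose letters are new, must itself contain a singly-occurring new letter, or else has at most $p/2$ distinct letters; in short words one can additionally exploit the coincidences forced by $\pi(0)=\pi(p)$ and the symmetry of $L$. The natural way to make this quantitative is to re-run the word-by-word count with a growing ``already-seen'' set $A$ and the sets $\mathcal C_A$, $\widetilde{\mathcal C}_A$ of (\ref{eqn: C A}); I expect most of the work of the proof to lie in this bookkeeping, after which Lemma \ref{prop:Pi C_k,p} follows from $\#\Pi^*_{L_n}(W)\le B^{(p+1)k}N^{(pk+k)/2-1/2}=o(N^{(pk+k)/2})$.
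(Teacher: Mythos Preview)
Your setup and the case $\#\mathcal C_\emptyset\ge 2$ are exactly the paper's Case 1, carried out correctly: the spanning tree, the $\zeta$-order satisfying \eqref{eqn: condition zeta}, the dichotomy $\mathcal C_\emptyset/\widetilde{\mathcal C}_\emptyset$, and the letter-count bound $2|W|\le pk-\#\widetilde{\mathcal C}_\emptyset$ all match. You have also correctly isolated $\#\mathcal C_\emptyset\in\{0,1\}$ as the real difficulty.

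However, your plan for these boundary cases is not workable as written. The observation that the first word (a leaf) has at most $p/2$ distinct letters is already absorbed in $|W|\le pk/2$ and yields nothing new; the constraints $\pi(0)=\pi(p)$ and the symmetry of $L$ are already built into Lemma~\ref{lemma:choices pi t}; and re-running the count with a nonempty $A$ does not by itself create the extra half-degree saving. The ingredient you are missing is a \emph{reordering trick}: when the center-out $\zeta$-order fails to produce enough words with a singly-occurring new letter, the paper abandons \eqref{eqn: condition zeta} and reads the words in a different, hand-picked order. Concretely, it first introduces $U=\#\{a_{i,j}\in\widetilde{\mathcal C}_\emptyset:m_{S_W}(a_{i,j})\ge 4\}$ (your ``multiplicity $\ge 4$'' idea) and disposes of the cases $U\ge 2$ (when $R=0$) and $U\ge 1$ (when $R=1$) by letter-counting alone. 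In the remaining cases it selects a suitable leaf $i$ whose bridge letter has $m_{S_W}(a_{i,j})=3$ (or $=2$ when $R=1$), shows that this forces $m_{S_{w_j}}(a_{i,j})=1$ at the \emph{parent} $j$, and then starts the reading order at $j$ (or, when $R=1$, at the pair $i,j'$ for a second leaf $i'$ furnished by a short tree lemma). In this new order the bridge letter is singly-occurring and new in the first word read, so Lemma~\ref{lemma:choices pi t} yields an additional $N^{-1}$ saving independent of the one already counted --- enough to push the exponent to at most $\tfrac{pk+k-1}{2}$. This reordering is the genuine idea in the boundary cases, not mere bookkeeping.
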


We first state a lemma required in the proof of Lemma \ref{prop:Pi C_k,p}.
\begin{lemma}\label{prop: Pi_L F-R}
	For $ k \geq 3$ and $p \geq 1$, let $W \in \WW_{p,k}$ be such that $G_W $ is connected. Suppose $T_W$ is a spanning tree of $G_W$, $\varphi: E(T_W) \rightarrow S_W$ is a one-one function, $\zeta$ is a renumbering obeying (\ref{eqn: condition zeta}), $\{L_n\}$ is a sequence of link functions obeying Assumption \ref{Condition B} and $A \subset S_W$. Then 
	\begin{equation}
		\#\Pi_{L_n}^*(W) \leq B^{pk} N^{F-R},
	\end{equation}
	where $F=F(W,A)$ is the number of generating vertices of $W$ given A and $R=\#  \CC_A$.
\end{lemma}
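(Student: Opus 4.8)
**Proof plan for Lemma (the $\#\Pi_{L_n}^*(W) = O(N^{F-R})$ bound).**

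The plan is to count circuits $\pi_{\zeta^{-1}(1)}, \pi_{\zeta^{-1}(2)}, \ldots, \pi_{\zeta^{-1}(k)}$ one word at a time, in the order dictated by the renumbering $\zeta$, and to track the degrees of freedom using the generalized version of Lemma \ref{lemma:choices pi t}. At stage $t$ we are choosing $\pi_{\zeta^{-1}(t)}$ given the previously chosen circuits; by the generalized counting lemma the number of choices is at most $B^{p+1}N^{F_{\zeta^{-1}(t)}}$ in general, and at most $B^{p+1}N^{F_{\zeta^{-1}(t)}-1}$ whenever the word $w_{\zeta^{-1}(t)}$ contains a letter that is new (appears nowhere in $w_{\zeta^{-1}(\ell)}$ for $\ell < t$) and occurs exactly once in $w_{\zeta^{-1}(t)}$. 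Multiplying over $t = 1, \ldots, k$ gives $\#\Pi_{L_n}^*(W) = O(N^{F - R'})$, where $F = \sum_t F_{\zeta^{-1}(t)} = F(W,A)$ counting with $A = \phi$ — but we must be careful: $F(W,A)$ as defined uses the set $A$, so the right bookkeeping is that $\sum_t F_{\zeta^{-1}(t)}$ (over generating vertices given $A$) equals $F(W,A)$, and $R'$ is the number of stages $t$ at which the ``$-1$'' saving is triggered.

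The heart of the argument is therefore to show $R' \geq R = \#\mathcal{C}_A$, i.e.\ that every letter $a_{i,j} \in \mathcal{C}_A$ contributes a distinct such saving. Fix $a_{i,j} = \varphi(\{i,j\}) \in \mathcal{C}_A$ with $\zeta(i) < \zeta(j)$. By definition of $\mathcal{C}_A$, the letter $a_{i,j}$ does not appear in any $w_\ell$ with $\zeta(\ell) < \zeta(i)$, it is not in $A$, and $m_{S_{w_i}}(a_{i,j}) = 1$. Since $\varphi$ is injective, distinct edges of $T_W$ give distinct letters $a_{i,j}$, hence distinct ``lower endpoints'' carry distinct qualifying letters; the subtlety is that two different edges could share the same lower endpoint $i$. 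I would resolve this by noting that the saving at stage $t$ in the counting lemma is only claimed to occur once per word (the lemma gives a factor $N^{F_t - 1}$, not $N^{F_t - (\#\text{such letters})}$), so I should instead associate to each $a_{i,j} \in \mathcal{C}_A$ not the stage $\zeta(i)$ but a stage determined by the \emph{second} (and only other) occurrence of $a_{i,j}$ in $S_W$, namely in $w_j$ at stage $\zeta(j)$: when we reach $w_j$, the letter $a_{i,j}$ is no longer new, so it is not this occurrence that triggers a saving. I therefore want to reorganize: argue that for each $a_{i,j} \in \mathcal{C}_A$, at stage $\zeta(i)$ the letter $a_{i,j}$ is a new letter occurring exactly once in $w_i$, so stage $\zeta(i)$ qualifies for a saving; and then show that the map $a_{i,j} \mapsto \zeta(i)$ need not be injective, so to get $R' \geq R$ I must argue more carefully that the number of \emph{distinct} stages $\zeta(i)$ arising this way, weighted by how the counting lemma actually distributes savings, still dominates $R$. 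The cleanest route is probably to re-examine the counting lemma's proof: within a single word $w_t$, each new-once letter that is ``usable'' (in the sense of the backward-counting trick in the proof of Lemma \ref{lemma:choices pi t}) genuinely removes one degree of freedom, and the spanning-tree structure together with condition \eqref{eqn: condition zeta} guarantees these are all usable simultaneously.

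The main obstacle I anticipate is exactly this accounting of multiple qualifying letters inside one word versus the single-$N^{F_t-1}$ bound recorded in Lemma \ref{lemma:choices pi t}: I expect I will need a refined version of that lemma (or a direct re-run of its backward-assignment argument) establishing that if $w_t$ contains $s$ distinct letters that are new and each occurs exactly once in $w_t$, then $Q_t \leq B^{p+1} N^{F_t - s}$, provided the generating-vertex structure permits the backward-assignment trick to be applied $s$ times without conflict. Once that refinement is in hand, summing over $t$ and using that $\{a_{i,j} : a_{i,j} \in \mathcal{C}_A\}$ are distinct letters (injectivity of $\varphi$), partitioned according to which word $w_i$ hosts their unique occurrence, yields $\sum_t s_t \geq \#\mathcal{C}_A = R$, and hence $\#\Pi_{L_n}^*(W) = O(N^{F-R})$. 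The renumbering condition \eqref{eqn: condition zeta} (processing vertices in order of increasing distance from the center of $T_W$) is what ensures that for each tree edge $\{i,j\}$ with $\zeta(i) < \zeta(j)$, the vertex $i$ is genuinely encountered before $j$ and the letter $a_{i,j}$ is first seen in $w_i$, so this hypothesis is used precisely to make the bookkeeping above well-defined.
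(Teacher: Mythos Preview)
Your direct sequential approach is sound, and the obstacle you anticipate does not in fact arise. Under condition \eqref{eqn: condition zeta}, the renumbering $\zeta$ processes vertices in order of \emph{decreasing} distance from the center $\mathbb{O}(T_W)$ (not increasing, as you wrote), so $T_W$ is effectively rooted at the vertex carrying the largest $\zeta$-value. Every other vertex $i$ then has a unique parent edge in $T_W$, and that parent edge is the only tree edge for which $i$ is the lower-$\zeta$ endpoint. Consequently the map $a_{i,j} \mapsto i$, and hence $a_{i,j} \mapsto \zeta(i)$, is already injective on all of $\operatorname{Im}(\varphi)$; distinct elements of $\mathcal{C}_A$ therefore trigger savings at distinct stages, and Lemma~\ref{lemma:choices pi t} as stated delivers the full $R$ savings. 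Your proposed multi-letter refinement of that lemma is unnecessary.

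The paper takes a different route: it argues by induction on the radius $r = \max\{\operatorname{dist}(\mathbb{O}(T_W), v) : v \in V_W\}$, peeling off the outermost layer of vertices at each step and applying the induction hypothesis to the remaining subtree $W'$ with an augmented set $A'$ that records the letters already seen in the deleted outer words. Your direct stage-by-stage count avoids the induction and is arguably cleaner once the injectivity above is recognized. The paper's inductive framing, on the other hand, makes transparent why the general-$A$ statement is carried at all: it is precisely the induction hypothesis, with $A$ tracking letters whose $L$-values have been fixed by circuits chosen in earlier (outer) layers. To make your direct approach yield the general-$A$ bound you would need the natural variant of Lemma~\ref{lemma:choices pi t} in which $F_t$ counts generating vertices relative to $A$ (i.e.\ treating letters in $A$ as already having a prescribed $L$-value), but this is a bookkeeping adjustment, not the structural refinement you feared.
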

Assuming Lemma \ref{prop: Pi_L F-R}, we now prove Lemma \ref{prop:Pi C_k,p}. The proof of Lemma \ref{prop: Pi_L F-R} is given after the proof of Lemma \ref{prop:Pi C_k,p}.
\begin{proof}[Proof of Lemma \ref{prop:Pi C_k,p}] 
In this proof, we choose $A$ as the null set, and denote $\CC=\CC_\phi $ and $ \widetilde{\CC}=\widetilde{\CC}_\phi$. We prove the proposition by dividing into cases based on different values of $R= \# \CC$.
\vskip3pt

\noindent\textbf{Case 1: } $R \geq 2$ for some choice of renumbering $\zeta$ obeying $(\ref{eqn: condition zeta})$.

Fix a renumbering $\zeta$ obeying \eqref{eqn: condition zeta} such that $R \geq 2$. First note that each element of  $\widetilde{\mathcal{C}}$ appears at least thrice in $S_W$ and each element in $\CC$ appears at least twice. Considering the fact that every element in $S_W$ also appears at least twice, we get that the number of elements in the multiset $S_W \setminus \operatorname{Im}(\varphi)$ is at most $\frac{pk-3(k-1-R)-2R}{2}$. Clearly, the number of elements in $\operatorname{Im}(\varphi)$ is $k-1$. As a result, we get that the number of generating vertices of $W$ is
$$F=|W|+k \leq \frac{pk-3(k-1-R)-2R}{2}+(k-1)+k=\frac{pk}{2}+\frac{k}{2}+\frac{1+R}{2}.$$
Thus, it follows from Lemma \ref{prop: Pi_L F-R} that $\frac{1}{N^{\frac{pk+k}{2}}} \#\Pi_{L_n}^*(W) \leq B^{pk} N^{\frac{1-R}{2}}\leq B^{pk}/\sqrt{N}$ if $R \geq 2$. 

Before going to other cases, we define another quantity. Let
\begin{equation}\label{eqn:U,D}
	\mathcal{D}= \{a_{i,j} \in \widetilde{\mathcal{C}}: m_{S_W}(a_{i,j}) \geq 4\} \text{ and } U =\# \mathcal{D}.
\end{equation}
Note that $\mathcal{D}$ and $U$ depends on the sentence $W$, the map $\varphi$ and the renumbering $\zeta$.
\vskip3pt
\noindent\textbf{Case 2:} $R=0$ for some choice of renumbering $\zeta$ obeying $(\ref{eqn: condition zeta})$.

Let $\zeta$ be a renumbering of vertices obeying $(\ref{eqn: condition zeta})$ such that $R=0$.
We first consider the case $U \geq 2$, where $U$ is as defined in (\ref{eqn:U,D}). Since $R=0$, by Lemma \ref{prop: Pi_L F-R}, we have $\#\Pi_{L_n}^*(W)\leq B^{pk}N^F$. Note that each letter in $\mathcal{D}$ appears at least four times, each letter in $\CC$ appears at least thrice and all other letters appear at least twice. Therefore, 
$$F \leq \frac{pk-4U-3(k-1-U)}{2}+(k-1)+k=\frac{pk}{2}+\frac{k}{2}-\frac{U}{2}+\frac{1}{2}.$$
As a result, we get that $\frac{1}{N^{\frac{pk+k}{2}}} \#\Pi_{L_n}^*(W) \leq B^{pk}/\sqrt{N}$ if $U \geq 2$.

Next consider the case $U \in \{0,1\}$. As $k \geq 3$, $T_W$ has at least two leaves. Since $U \leq 1$, there exists a leaf $i$ such that $\{i,j\} \in E(T_W)$ and $m_{S_W}(a_{i,j}) \leq 3$. Note that since $R=0$, $a_{i,j} \in \widetilde{\mathcal{C}}$ and therefore $m_{S_W}(a_{i,j}) =3$. 

To determine $\# \Pi_{L_n}^*(W)$, we consider a reordering of vertices such that the first vertex is $j$. We start by counting the number of possible choices for the circuit $\pi_j$. Since $a_{i,j} \in \widetilde{\mathcal{C}}$ and $m_{S_W}(a_{i,j})=3$, by the construction of $\widetilde{\mathcal{C}}$ it follows that $m_{S_{w_j}}(a_{i,j})=1.$ Therefore by Lemma \ref{lemma:choice of pi_t reordered}, the maximum number of choice for $\pi_j$ is $B^pN^{F_j-1}$, where $F_j$ is the number of generating vertices of $w_j$. For the rest of the circuits $\pi_r$, the maximum number of possibilities for $\pi_r$ is $B^pN^{F_r}$, where $F_r$ is the number of generating vertices of $w_r$ with respect to the new ordering. Therefore we get that $\#\Pi_{L_n}^*(W)\leq B^{kp}N^{|W|+k-1}$ and
$$|W|+(k-1) \leq \frac{pk-3(k-1)}{2}+(k-1)+(k-1)=\frac{pk+k-1}{2}.$$
Hence $\frac{1}{N^{\frac{pk+k}{2}}} \#\Pi_{L_n}^*(W)\leq\frac{B^{pk}}{\sqrt{N}}$.
\noindent
Now, we are left with the following case.
\vskip3pt
\noindent\textbf{Case 3:} $R=1$ for all choice of renumbering $\zeta$ obeying (\ref{eqn: condition zeta}).

We fix a renumbering $\zeta$ obeying (\ref{eqn: condition zeta}). Suppose $U \geq 1$, then by Lemma \ref{prop: Pi_L F-R}, $\#\Pi_{L_n}^*(W)\leq B^{pk} N^{F-R}=B^{pk}N^{F-1}$. Also, we get
$$F-1=|W|+k-1 \leq \frac{pk-4U-3(k-2-U)-2}{2}+(k-1)+k-1=\frac{pk}{2}+\frac{k}{2}-\frac{U}{2}.$$
As a result, $\frac{1}{N^{\frac{pk+k}{2}}} \#\Pi_{L_n}^*(W)\leq B^{pk}N^{-U/2} \leq \frac{B^{pk}}{\sqrt{N}},$ for all $U \geq 1$. 

Now suppose $U=0$. Since $R=1$, there exists an edge $\{i,j\} \in E(T_W)$ with $\zeta(i) < \zeta(j)$, such that $a_{i,j} \in \mathcal{C}$. We fix the edge $\{i,j\}$ for the rest of this proof. Suppose $m_{S_W}(a_{i,j}) \geq 3$. Then we have $F-R \leq \frac{pk-3(k-1)}{2}+(k-1)+k-1$, and thus $\frac{1}{N^{\frac{pk+k}{2}}} \#\Pi_{L_n}^*(W) \leq \frac{B^{pk}}{\sqrt{N}}$.

Hence, it is sufficient to prove for the case where $m_{S_W}(a_{i,j})=2$. We state the following claim which is easy to see.

\noindent \textbf{Claim 1:} There exists a leaf $i^\prime$ such that $\{i^\prime,j^\prime\} \in E(T_W)$, $i \neq i^\prime, j^\prime$ and $\operatorname{dist}(i^\prime, \mathbb{O}(T_W))=\max\{\operatorname{dist}(\mathbb{O}(T_W),v):v \in V_W\}$.

Consider $i^\prime,j^\prime$ from the above claim. Since $m_{S_W}(a_{i,j})=2$, $a_{i,j}$ belongs to $\mathcal{C}$ for all choice of $\zeta$ obeying \eqref{eqn: condition zeta} and this implies $a_{i^\prime, j^\prime} \in \widetilde{\mathcal{C}}$ for all choice of $\zeta$ obeying (\ref{eqn: condition zeta}).
Using Claim 1, without loss of generality assume $\zeta(i^\prime)=1$. Combining $\zeta(i^\prime)=1$ with the deductions $a_{i^\prime,j^\prime} \in \widetilde{\CC}$ and $U=0$, we get $m_{S_{w_{i^\prime}}}(a_{i^\prime, j^\prime})=2$ and $m_{S_{w_{j^\prime}}}(a_{i^\prime, j^\prime})=1$. 

We determine the cardinality of $\Pi_{L_n}^*(W)$ in the following way. We start by counting the number of choices for $\pi_i \in \Pi_{L_n}^*(w_i)$. Since $m_{S_{w_i}}(a_{i,j})=1$, by Lemma \ref{lemma:choice of pi_t reordered}, the number of choices for $\pi_i$ is at most $B^pN^{F_i-1}$, where $F_i$ is the number of generating vertices of $w_i$. Next, we count the number of possibilities for $\pi_{j^\prime}$ such that $(\pi_i,\pi_{j^\prime}) \in \Pi_{L_n}^*(w_i,w_{j^\prime})$. Since $i \neq i^\prime,j^\prime$ and $m_{S_W}(a_{i^\prime,j^\prime})=3$ with $m_{S_{w_{i^\prime}}}(a_{i^\prime,j^\prime})=2$ and $m_{S_{w_{j^\prime}}}(a_{i^\prime,j^\prime})=1$, it follows that $a_{i^\prime,j^\prime}$ has not appeared in $w_i$. Thus, $a_{i^\prime,j^\prime}$ is a new letter that appears only once in $w_{j^\prime}$. 
Hence by Lemma \ref{lemma:choice of pi_t reordered}, the maximum number of choices for $\pi_{j^\prime}$ is $B^pN^{F_{j^\prime}-1}$, where $F_{j^\prime}$ is the number of generating vertices of $w_{j^\prime}$ with respect to the new ordering. For $r \neq i,j^\prime$, the maximum number of possibilities for $\pi_r$ is at most $B^pN^{F_r}$, where $F_r$ is the number of generating vertices of $w_r$ with respect to the new ordering. Therefore, we get	$\#\Pi_{L_n}^*(W) \leq B^{pk}N^{|W|+k-2}$. We also have the inequality
$$|W|+k-2 \leq \frac{pk-3(k-2)-2}{2}+k-1+k-2=\frac{pk}{2}+\frac{k}{2}-1.$$
Hence, it follows that $\frac{1}{N^{\frac{pk+k}{2}}}\#\Pi_{L_n}^*(W) \leq \frac{B^{pk}}{\sqrt{N}}$ and this completes the proof.
\end{proof}
\begin{figure}
	\includegraphics[height=40mm,width=150mm]{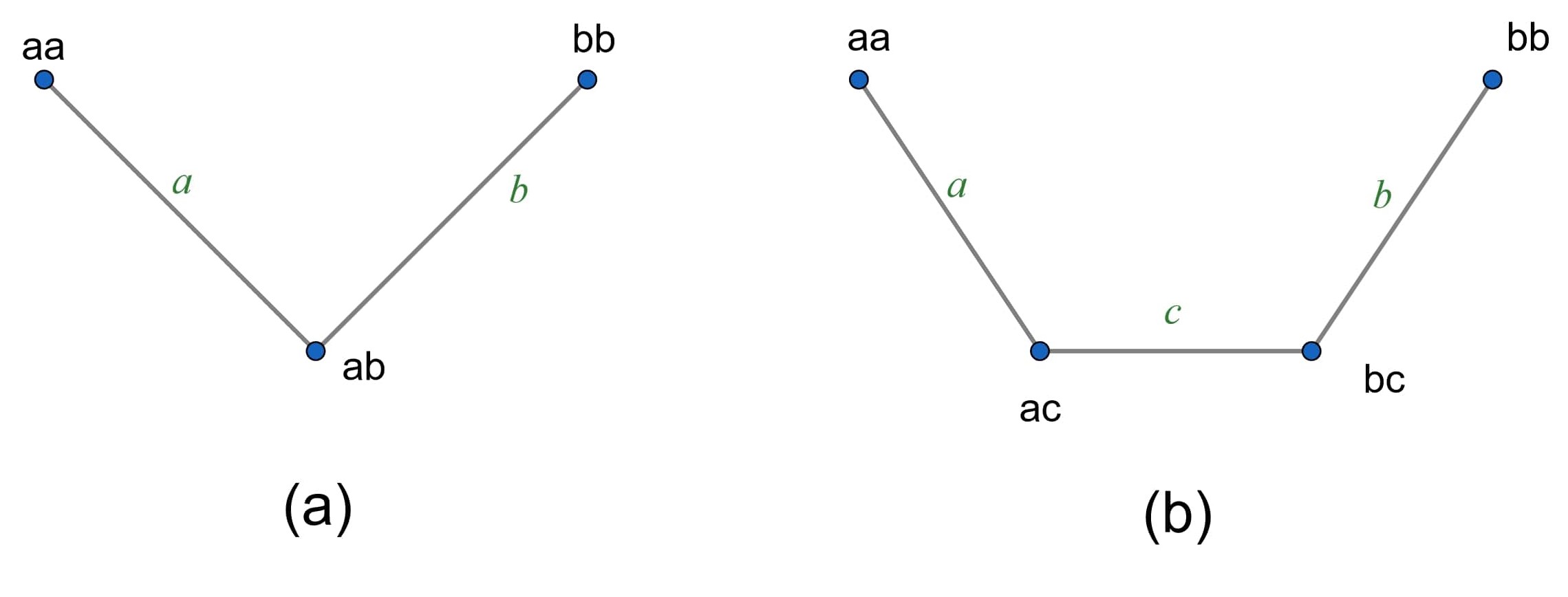}
	\caption{(a) is an example of a sentence such that $R=0$ for all choice of $\zeta$ obeying \eqref{eqn: condition zeta} and (b) is an example of a sentence such that $R=1$ for all choice of $\zeta$ obeying \eqref{eqn: condition zeta}. In both the figures, for an edge $e$, the letter $\varphi(e)$ is given along the edge.}\label{fig:R 0,1}
\end{figure}
\begin{remark}
	We remark that the cases 1 and 2 in the proof of Lemma \ref{prop:Pi C_k,p} are not disjoint. But, none of the above cases are also subcase of the other case. See Figure \ref{fig:R 0,1} for examples of sentences $W$ that fall under only one of the cases.
\end{remark}
\begin{remark}\label{remark:Pi L upper bound}
In Lemma \ref{prop:Pi C_k,p}, the degree of freedom for $ \Pi_{L_n}^*(W)$ is obtained by subtracting $R$ from the upper bound on the number of generating vertices in $W$, with respect to $A= \phi$. Suppose $W \in \mathcal{W}_{p,k}$, $A \subset S_W$ is non-empty and $m_{S_W}(a) \geq 2$ for all $a \notin A$. Suppose there exists a spanning tree $T_W$ of $G_W$ and a one-one function $\varphi: E(T_W) \rightarrow S_W$ obeying condition (iii) of Definition \ref{defn:C_k,p}. Then the upper bound on the number of generating vertices of $W$ in this case, is less than the case when $A = \phi$. Hence here also, $\frac{1}{N^{\frac{pk+k}{2}}}\# \Pi_{L_n}^*(W) \leq \frac{B^{pk}}{\sqrt{N}}$ for all $k \geq 3$. 
\end{remark}

\begin{figure}[h!]
	\includegraphics[height=42mm,width=120mm]{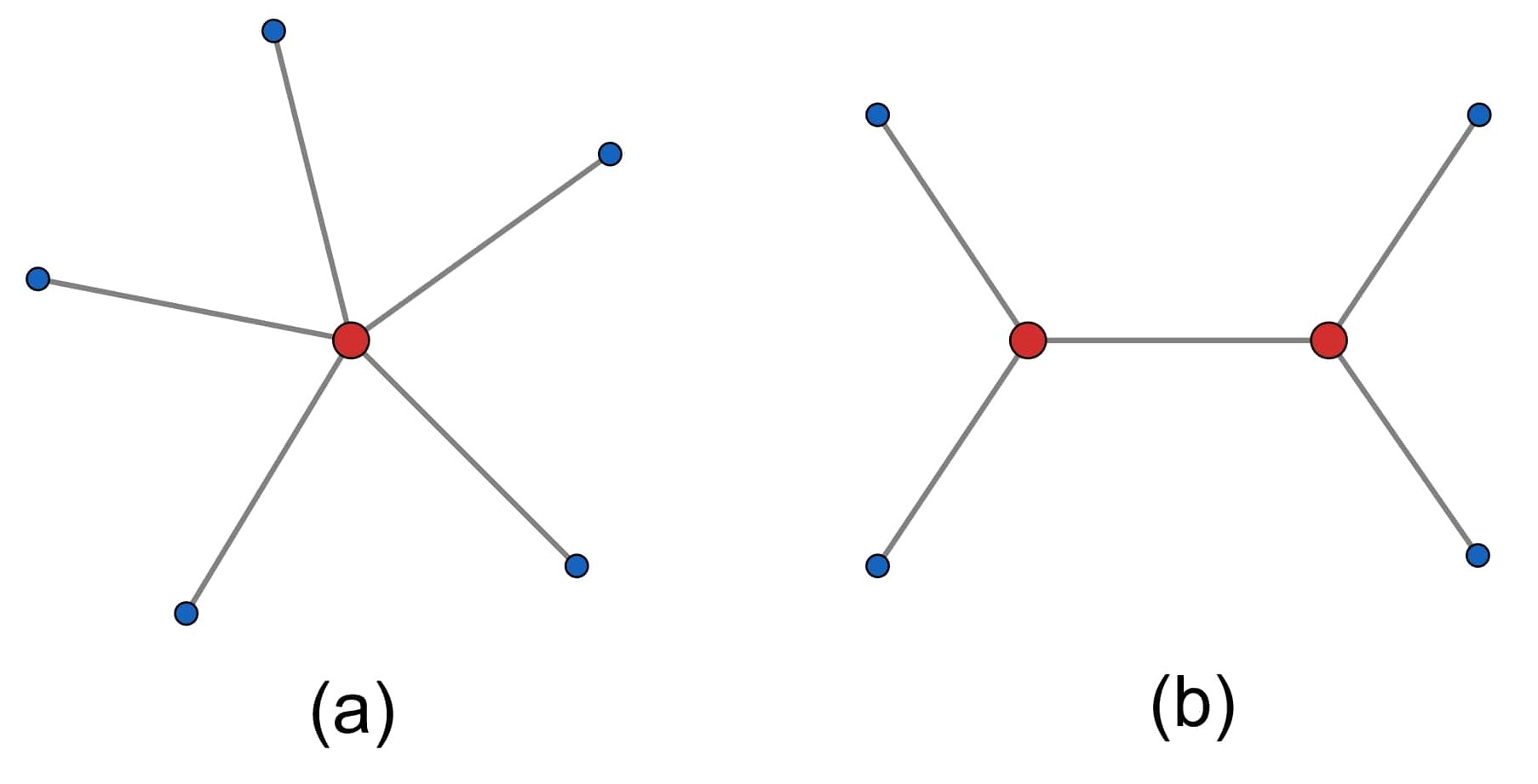}
	\caption{Trees such that the distance of all vertices from the center is one. (a) shows the case when the center consists of only one vertex and (b) shows the case when the center consists of two vertices.} \label{fig:CaseR=1}  
\end{figure}
Next we prove Lemma \ref{prop: Pi_L F-R}. The proof is by induction on $r=\max\{\operatorname{dist}\left(\mathbb{O}(T_W),v\right):v \in V_W\}$, where $\mathbb{O}(T_W)$ is the center of $T_W$.
\begin{proof}[Proof of Lemma \ref{prop: Pi_L F-R}]
	Let $r=\max\{\operatorname{dist}\left(\mathbb{O}(T_W),v\right):v \in V_W\}$ and $A \subseteq S_W$. For $r=1$, $T_W$ is one of the forms given in Figure \ref{fig:CaseR=1}. First suppose that $T_W$ is a star graph as considered in Figure \ref{fig:CaseR=1} (a). Without loss of generality assume that $\zeta$ is the identity map on $[k]$. Then the image set of the map $\varphi$ is given by $\{a_{i,k}:1 \leq i \leq k-1\}$, where $a_{i,k}$ is as defined in \eqref{eqn: defn ai,j}. Note that the number of choices for $\pi_1$ is bounded by $B^pN^{F_1}$ if $a_{1,k}$ appears more than once in $S_{w_1}$ and is bounded above by $B^pN^{F_1-1}$ if $a_{1,k}$ appears only once in $S_{w_1}$, where $F_1$ is the number of generating vertices of $w_1$ with respect to $\zeta$. For $2 \leq i \leq k$, once $\pi_1,\pi_2,\ldots,\pi_{i-1}$ are chosen, the number of choices for $\pi_i$ is bounded above by $B^pN^{F_i-1}$ if $a_{i,k} \in \mathcal{C}_A$ and is bounded above by $B^pN^{F_i}$ if $a_{i,k} \notin \mathcal{C}_A$, where $F_i$ is the number of generating vertices of $w_i$ with respect to $\zeta$. This implies that $\# \Pi_{L_n}(W) \leq B^{pk}N^{F-R}$, where $R=\# \CC_A$, completing the proof for this case.

Now, for $T_W$ as the graph considered in Figure \ref{fig:CaseR=1} (b), note that a similar argument again works. Combining the above observations, we get
the result for $r=1$.

Now, we move to the induction step: Suppose the result holds for all sentences $W$ with a spanning tree $T_W$ such that $\max\{\operatorname{dist}\left(\mathbb{O}(T_W),v\right):v \in V_W\} \leq r $ and all choices of $A \subset S_W$. Let $W$ be a sentence with a spanning tree $T_W$ such that $\max\{\operatorname{dist}\left(\mathbb{O}(T_W),v\right):v \in V_W\}=(r+1)$ and assume without loss of generality that $\zeta$ is the identity map. Let $W^\prime$ be the sentence obtained from $W$ after the deletion of all words $w_i$ such that $\operatorname{dist}(i,\mathbb{O}(T_W))=r+1$ (see Figure \ref{fig: construction TW prime}). Consider the spanning tree $T_{W^\prime}$ of $G_{W^\prime}$ obtained by the restriction of $T_W$ to $V(G_{W^\prime})$ and define $\varphi^\prime= \varphi|_{E(W^\prime)}$. Define $\zeta^\prime: V(G_{W^\prime}) \rightarrow
\{1,2,\ldots ,k-q \}$ given by $\zeta^\prime(i):= \zeta (i)-q$, where $q$ is the number of vertices in $T_W$ at a distance of $r+1$ from the center of $T_W$. It follows that $\zeta^\prime$ is a bijection obeying (\ref{eqn: condition zeta}). Also, define $$A^\prime= \left(A \cup  \bigcup_{i: \operatorname{dist}(i,\mathbb{O}(T_W))=r+1}  S_{w_i} \right)\cap S_{W^\prime}.$$ 
By induction hypothesis it follows that $\# \Pi_{L_n}^*(W^\prime)\leq B^{(k-q)p}N^{F_1-R_1}$, where $F_1$ is the number of generating vertices of $W^\prime$ given $A^\prime$ and 
$$R_1= \# \{a_{i,j} \in \operatorname{Im}(\varphi^\prime): a_{i,j} \notin S_{w_\ell} \,\, \forall \ \zeta^\prime(\ell)<\zeta^\prime(i), a_{i,j} \notin A^\prime \text{ and } m_{S_{w_i}}(a_{i,j})=1\}.$$

Note that $\zeta^\prime(\ell)<\zeta^\prime(i)$ if and only if $\zeta(\ell)<\zeta(i)$ and $\operatorname{dist}(\mathbb{O}(T_W),\ell) \leq r$. Further for $a_{i,j} \in \operatorname{Im}(\varphi^\prime)$, $a_{i,j} \notin A^\prime$ if and only if $a_{i,j} \notin A$ and $a_{i,j} \notin S_{w_\ell}$ for any $\ell$ such that $\operatorname{dist}(\mathbb{O}(T_W),\ell)=r+1$. Thus $R_1$ can be given by $$R_1= \# \{a_{i,j} \in \operatorname{Im}(\varphi^\prime): a_{i,j} \notin S_{w_\ell} \,\, \forall \ \zeta(\ell)<\zeta(i), a_{i,j} \notin A \text{ and } m_{S_{w_i}}(a_{i,j})=1\}.$$ Therefore $R=R_1+R_2$, where $$R_2= \# \{a_{i,j} \in \operatorname{Im}(\varphi) \setminus \operatorname{Im}(\varphi^\prime): a_{i,j} \notin S_{w_\ell} \,\, \forall \ \zeta(\ell)<\zeta(i), a_{i,j} \notin A \text{ and } m_{S_{w_i}}(a_{i,j})=1\}.$$
Now by applying Lemma \ref{lemma:choice of pi_t reordered}, we get that
$$\# \Pi_{L_n}^*(W) \leq \# \Pi_{L_n}^*(W^\prime) \times B^{pq}N^{F_2-R_2},$$
where $F_2$ is the number of generating vertices of the form $(u,j)$ where $\operatorname{dist}(\mathbb{O}(T_W),u)=r+1$. Hence, we have $\# \Pi_{L_n}^*(W)\leq B^{(k-q)p}N^{F_1-R_1} \times B^{pq}N^{F_2-R_2}$ and the result follows since $F_1+F_2=F$ and $R_1+R_2=R$.
\end{proof}

\begin{figure}
	\includegraphics[height=40mm,width=150mm]{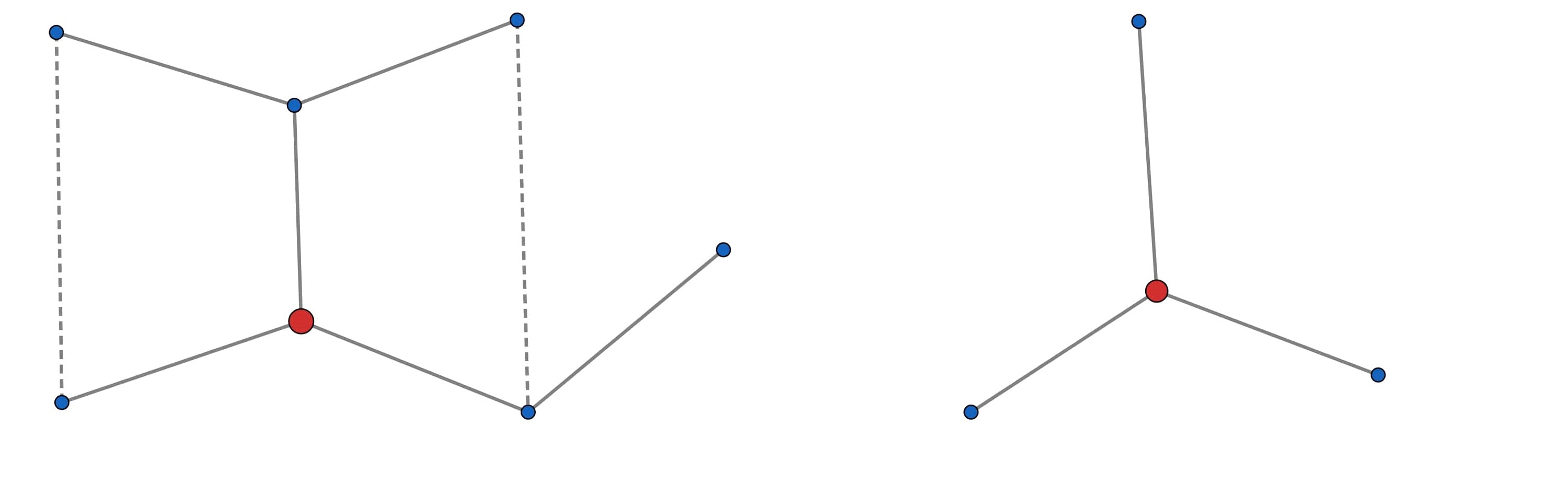}
	\caption{(The construction of the subgraph $T_{W^\prime}$ from the graph $G_W$) The figure in the left represents the graph $G_W$. The center of $T_W$ is denoted by a larger red dot. The solid lines represent the edges of a spanning tree $T_W$ and the dotted lines the edges of $G_W$ not present in $T_W$. The figure in the right, $T_{W^\prime}$, is obtained from $T_W$ by removing the vertices at the distance two from the center of $T_W.$}\label{fig: construction TW prime}
\end{figure}
\noindent\textbf{Step II:} In this step we establish relation between clique cluster and Type I subgraph.
\begin{lemma}\label{prop:existence_Type1subg}
	For $p \geq 1$ and $ k \geq 3$, let $W =(w_1,w_2,\ldots, w_k) \in \mathcal{W}_{p,k}$ be a sentence such that each letter in $S_W$ is repeated at least twice. If $G_W$ is connected but not a clique cluster, then $G_W$ has a Type I subgraph.
\end{lemma}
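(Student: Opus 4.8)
Throughout, the target is to exhibit a spanning forest $H_W\subseteq G_W$ on vertex set $[k]$, an injective edge‑labelling $\varphi$ by common letters, and an ordering $T_1,\dots,T_r$ of its components, meeting (i)--(iv) of Definition \ref{defn:Type I subgraph}. The plan is to build the labelling into the construction, so that (i) and (iv) come for free, and to spend all effort on (ii) and (iii). A preliminary observation drives the whole argument: condition (iii) forces the first component $T_1$ to be \emph{self-contained}, meaning every letter occurring in the words of $T_1$ must occur there with multiplicity at least two, since at $m=1$ one has $\bigcup_{i\leq 1}S_{W_{T_i}}=S_{W_{T_1}}$. So the two jobs are (a) produce a connected, injectively‑labellable, self-contained subgraph on $\geq 3$ vertices to serve as $T_1$, and then (b) peel off and order the remaining components so the cumulative form of (iii) keeps holding; the hypothesis ``not a clique cluster'' (Definition \ref{defn:special_cluster}) will be exactly what makes job (a) possible, and $m_{S_W}(a)\geq 2$ for all $a$ will keep job (b) from getting stuck.

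First dispose of the easy case: some spanning tree $\mathcal T$ of $G_W$ already admits an injective $\varphi$. This can be tested via Hall's condition on the bipartite incidence graph between $E(\mathcal T)$ and $S_W$, joining $\{i,j\}$ to each $a\in S_{w_i}\cap S_{w_j}$. When it holds, take $H_W=\mathcal T$: then $r=1$, $\#V(T_1)=k\geq 3$, and (iii) is automatic since $S_{W_{T_1}}=S_W$ and every letter of $S_W$ occurs at least twice.

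In general I would construct $T_1$ as follows. Because $G_W$ is not a clique cluster it contains a labellable path of length two: if $G_W$ is not complete, a shortest path between two non-adjacent vertices has its first two edges carrying disjoint available‑letter sets (an overlap would force a chord and shorten the path), so pick distinct labels there; if $G_W$ is complete, failure of the clique‑cluster condition means the available‑letter assignment on $K_k$ is not a single common letter, hence two adjacent edges carry distinct available letters. Starting from such a path and two distinct labels, form the \emph{closure}: repeatedly adjoin every word containing a letter that currently occurs only once among the words collected so far. This terminates; the collected vertex set $U$ induces a connected subgraph of $G_W$ (each adjoined vertex shares a cross‑letter, hence an edge, with a vertex already present) and $G_W[U]$ is self-contained by construction and still contains the length‑two path with two distinct labels, so it is not a clique cluster. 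One then labels a spanning tree of $G_W[U]$ injectively by a Hall‑type argument inside $U$; set $T_1=G_W[U]$. Thereafter I would iteratively peel off further components: among the remaining words, take the closure (relative to the letters already accumulated) of another labelled edge, label a spanning tree of it, make it $T_2$, and so on; words all of whose letters are already doubly present among earlier words become the final singleton components. The hypothesis $m_{S_W}(a)\geq 2$ guarantees this iteration exhausts $[k]$, producing an ordering satisfying (ii) and (iii).

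The hard part is the third paragraph. Identifying the clique cluster as the \emph{only} obstruction to a labellable length‑two path is routine, but proving that the closure $U$ one is forced to take still carries an injective labelling of a spanning tree --- and that every subsequent peeled component does too --- is the genuine content, since ``not a clique cluster'' alone does not imply labellability of a spanning tree (Hall's condition can still fail). Making the closure/Hall bookkeeping go through simultaneously with keeping every stage self-contained relative to the cumulative letters is where I expect the real work to lie; the global spanning‑tree Hall step and the forcing of $T_1$'s self-containedness are immediate.
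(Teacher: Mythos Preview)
Your overall architecture matches the paper's proof exactly: start from a length-two path with two distinct cross-letters (available precisely because $G_W$ is not a clique cluster), take a closure to build a self-contained $T_1$, then iterate for $T_2, T_3, \ldots$ with singletons last. The difference is that you treat the injective labelling of a spanning tree of the closure as a \emph{separate} step requiring a Hall-type argument, and flag this as the genuine difficulty.

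The paper sidesteps this entirely with one observation you are just short of: the closure process \emph{is} the tree construction, with labels built in. When you adjoin a new vertex $i'$ because some letter $a$ currently has multiplicity one in the multi-set $S_{w_{i_1}} \uplus \cdots \uplus S_{w_{i_t}}$ of words collected so far, add the single edge from $i'$ to the unique vertex already present whose word contains $a$, and label that edge $a$. Every previously used label $a_e = \varphi(e)$ lies in the words at both endpoints of $e$, hence has multiplicity at least two in the current multi-set; since the new label $a$ has multiplicity one, it is automatically distinct from every $a_e$. Injectivity of $\varphi$ is thus a byproduct of the extension rule, and no Hall condition ever enters. The same mechanism drives the later components: each $T_{m+1}$ begins with a remaining vertex carrying a letter that appears once in its own word and not at all in $\bigcup_{i \leq m} S_{W_{T_i}}$, and grows by the same closure rule relative to the cumulative multi-set; if no such vertex exists, everything left becomes a singleton.

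So your ``hard part'' dissolves: you never need to label an arbitrary spanning tree of $G_W[U]$, only the specific tree the closure hands you, and that one comes pre-labelled. Your separate easy case (paragraph two) and your closing worry about Hall failing on $U$ are both unnecessary detours.
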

\begin{proof}
	The idea of the proof is to provide an explicit construction of the subforest $H_W$ with components $T_1,T_2,\ldots, T_r$ along with the one-one map $\varphi$.

Since $G_W$ is connected, but not a clique cluster, there exist distinct vertices $i_1,i_2,i_3$ and letters $a_1 \in S_{w_{i_1}} \cap S_{w_{i_2}}$ and $a_2 \in S_{w_{i_2}} \cap S_{w_{i_3}}$ with $a_1 \neq a_2$. Add the vertices $i_1,i_2,i_3$ to $V(T_1)$ and add the edges $\{i_1,i_2\},\{i_2,i_3\}$ to $E(T_1)$ and define $\varphi\left(\{i_1,i_2\}\right)=a_1$, $\varphi\left(\{i_2,i_3\}\right)=a_2$.

Suppose for all $x \in  S_{w_{i_1}} \uplus  S_{w_{i_2}} \uplus  S_{w_{i_3}}$, $x$ appears at least twice in $S_{w_{i_1}} \uplus  S_{w_{i_2}} \uplus  S_{w_{i_3}}$. Then define $V(T_1)=\{i_1,i_2,i_3\}$. If not, then there exists a letter $a_3 \in S_{w_{i_1}} \uplus  S_{w_{i_2}} \uplus  S_{w_{i_3}}$ such that $a_3$ appears only once in $S_{w_{i_1}} \uplus  S_{w_{i_2}} \uplus  S_{w_{i_3}}$. Since every letter appears at least twice in $S_W$, there exists $i_4 \neq i_1,i_2,i_3$ such that $a_3 \in S_{w_{i_4}}$. Let $i \in \{i_1,i_2,i_3\}$ be such that $a_3 \in S_{w_i}$, add the edge $\{i,i_4\}$ to the graph and define $\varphi\left(\{i,i_4\}\right)=a_3$. Since $G_W$ is a finite graph, this process should terminate in at most $k_1$ steps where $k_1 \leq k$. Define $T_1$ as the graph with vertex set $\{i_1,i_2,\ldots, i_{k_1}\}$ constructed as above. Note that by construction, $T_1$ is a tree such that each letter in  $S_{W_{T_1}}$ is repeated at least twice and $\varphi: E(T_1) \rightarrow S_W$ is a one-one map. If $\# V(T_1)=k$, then $H_W=T_1$ is the Type I subgraph.

Suppose we constructed disjoint subtrees  $T_1,T_2,\ldots, T_m$ with $\# V(T_i)=k_i$, $\kappa=(k_1+k_2+\cdots + k_m) <k$ and $k_i \geq 2$ for all $1 \leq i \leq m$ such that $T_1,T_2,\ldots ,T_m$ obey condition (iii) of Definition \ref{defn:Type I subgraph} and there exists a one-one function $\varphi: \cup_{i=1}^m E(T_i) \rightarrow S_W$ obeying condition (iv). Define
\begin{align*}
	M_{m}:=  \{i  &\in V(G_W) \setminus \left( V(T_1) \cup V(T_2) \cup \cdots \cup V(T_m) \right) :  \\ 
	&\exists \ a_{\kappa+1} \in S_{w_i} \text{ such that }  m_{S_{w_i}}(a_{\kappa+1})=1 \text{ and } a_{\kappa+1} \notin S_{W_{T_1}} \cup S_{W_{T_2}} \cup \cdots \cup S_{W_{T_m}}\}.
\end{align*}
If $M_{m}$ is empty, then define $H_W$  by choosing all the vertices in $G_W$ not belonging to $V(T_1) \cup V(T_2) \cup\cdots \cup V(T_m)$ as isolated vertices in $H_W$. By construction, $H_W$ is a forest. Further, note that as all isolated vertices in $G_W$ occur at the last by construction, condition (ii) is also satisfied. Then since $M_{m}$ is empty and by the assumption on $T_1,T_2,\ldots ,T_m$, $H_W$ satisfies condition (iii) of Definition \ref{defn:Type I subgraph}. Condition (iv) is satisfied as no new edges are added to $H_W$ afterwards. As a result, we get that $H_W$ is a Type I subgraph.

Suppose $M_{m}$ is not empty. Then define $i_{\kappa+1}= \min M_{m}$. Since each letter in $S_W$ appears at least twice, there exists $i_{\kappa +2} \notin  V(T_1) \cup \cdots \cup V(T_m) $, $i_{\kappa +2} \neq i_{\kappa+1}$ such that $a_{\kappa+1} \in S_{w_{i_{\kappa+2}}}$. Construct the edge $\{i_{\kappa+1},i_{\kappa+2}\}$ and define $\varphi\left(\{i_{\kappa+1},i_{\kappa+2}\}\right)=a_{\kappa+1}$. 

Suppose for all  $x \in  S_{w_{i_{\kappa+1}}} \uplus  S_{w_{i_{\kappa+2}}}$, $x$ appears at least twice in $S_{W_{T_1}} \uplus  S_{W_{T_2}} \uplus \cdots \uplus S_{W_{T_m}} \uplus  S_{w_{i_{\kappa+1}}} \uplus  S_{w_{i_{\kappa+2}}} $, we define $V(T_{m+1})= \{i_{\kappa+1},i_{\kappa+2}\}$ and $E(T_{m+1})=\{\{i_{\kappa+1},i_{\kappa+2}\}\}$. If not, then there exists $a_{\kappa+2}$ that appears only once in $S_{W_{T_1}} \uplus  S_{W_{T_2}} \uplus \cdots \uplus  S_{W_{T_m}} \uplus  S_{w_{i_{\kappa+1}}} \uplus  S_{w_{i_{\kappa+2}}} $, and hence there exists $i_{\kappa+3}$ such that $i_{\kappa+3} \notin V(T_1) \cup \cdots \cup V(T_m), i_{\kappa+3} \neq i_{\kappa+1},i_{\kappa+2}$ and $a_{\kappa+2} \in S_{w_{i_{\kappa+3}}}$. Let $i \in \{i_{\kappa+1},i_{\kappa+2}\}$ such that $a_{i_{\kappa+2}} \in S_{w_i}$. Construct the edge $\{i,i_{\kappa+3}\}$ and define $\varphi\left(\{i,i_{\kappa+3}\}\right)=a_{i_{\kappa+2}}$. Continue this construction to obtain a tree $T_{m+1}$ such that for all $x \in S_{W_{T_{m+1}}}$, $x$ appears at least twice in $S_{W_{T_1}} \uplus S_{W_{T_2}} \uplus \cdots \uplus S_{W_{T_{m+1}}}$. Thus, we have obtained a tree $T_{m+1}$ such that $T_1,T_2,\ldots ,T_{m+1}$ obey condition (iii) of Definition \ref{defn:Type I subgraph} and there exists a one-one function $\varphi: \cup_{i=1}^{m+1} E(T_i) \rightarrow S_W$ obeying condition (iv).  

Continue the construction of trees, and since $G_W$ is finite, the construction of trees should stop at some finite step. Define $H_W$ as the union of the subtrees along with the isolated vertices. Then by construction, all conditions of Type I subgraph are satisfied by $H_W$. Therefore $H_W$ is a Type I subgraph and the proof is complete.
\end{proof}

\noindent\textbf{Step III:} In this final step, using Lemmas \ref{prop:Pi C_k,p}, \ref{prop: Pi_L F-R} and \ref{prop:existence_Type1subg} we prove Proposition \ref{lemma: Bpl general}.

\begin{proof}[Proof of Proposition \ref{lemma: Bpl general}]
	First, consider a sentence $W \in \WW_{p,k}$ such that $G_W$ is a clique cluster but $W$ is not a clique sentence and each letter in $S_W$ appearing at least twice. Let $x$ denote the common letter in $W$. If there exists $i$ such that $m_{S_{w_i}}(x)=1$, then from Lemma \ref{lemma:choices pi t}, we have that $\# \Pi_{L_n}^*(W)=B^{kp}N^{|W|+k-1}$. Since each letter appears at least twice and $W$ is not a clique sentence, it follows that $|W| \leq \frac{pk-k-1}{2}+1$, proving the result. Now, if $m_{S_{w_i}}(x) \geq 2$ for all $i$, then $|W| \leq \frac{pk-2k}{2}+1$ and the result follows.

Consider a sentence $W =(w_1,w_2,\ldots, w_k) \in \mathcal{W}_{p,k}$ such that $G_W$ is connected but not a clique cluster, and each letter in $S_W$ appearing at least twice. Then by Lemma \ref{prop:existence_Type1subg}, $G_W$ always has a Type I subgraph. Let $H_1$ and $H_2$ be Type I subgraphs of $G_W$ with one-one maps $\varphi_1$ and $\varphi_2$, respectively. We say $H_1 \leq H_2$, if $H_1$ is a subgraph of $H_2$ and the restriction of $\varphi_2$ to the edge set of $H_1$ is $\varphi_1$, i.e. $\varphi_2|_{E(H_1)}=\varphi_1$. The relation $\leq$ is a partial order on the set of all Type I subgraphs of $G_W$ and we call a maximal element with respect to $\leq$, a maximal Type I subgraph. Since $G_W$ is a finite graph, by Lemma \ref{prop:existence_Type1subg}, maximal Type I subgraphs always exist. 

Let $H_W$ be a maximal Type I subgraph of $G_W$ with associated one-one function $\varphi_W$ and let $T_1,T_2,\ldots, T_r$ be the connected components of $H_W$ with $\# V(T_i)=k_i$ for all $i$.

Since $H_W$ is a Type I subgraph, $\# V(T_1) \geq 3$. Furthermore, by condition (iii) of Type I subgraph, each letter in $S_{W_{T_1}}$ appears at least twice and therefore, $W_{T_1} \in \CC_{p,k_1} $ where $k_1 \geq 3$ and $\CC_{p,k_1}$ is as defined in Definition \ref{defn:C_k,p}. Thus by Lemma \ref{prop:Pi C_k,p}, it follows that $\frac{1}{N^{(pk_1+k_1)/2}}\# \{\text{choices of } \pi_{T_1}\}$ is of the order $o(1)$. Suppose $\pi_{T_1},\pi_{T_2},\ldots, \pi_{T_{m-1}}$ are chosen. Then, by Remark \ref{remark:Pi L upper bound}, it follows that the number of choices of $\pi_{T_{m}}$ are given by
\begin{align*}
	\frac{1}{N^{\frac{pk_m+k_m}{2}}}\# \{\text{choices of } \pi_{T_m}\} \leq
	\left\{\begin{array}{lll}
		B^{pk_m} & \ \text{ if } k_m=2, \\
	\frac{B^{pk_m}}{\sqrt{N}} & \ \text{ if } k_m \geq 3.
	\end{array} \right.
\end{align*}
Next we prove a claim for the case when $k_m=1$.
\vskip3pt
\noindent\textbf{Claim 1:} Let $m$ be such that $k_m=1$. Then there exists $a \in S_{W_{T_m}}$ such that $a$ appears at least once in $\uplus_{i \leq m-1} S_{W_{T_i}}$.
\begin{proof}[Proof of Claim 1]
	Suppose for all $a \in S_{W_{T_m}}$, $a$ has not appeared in $\uplus_{i \leq m-1} S_{W_{T_i}}$. Since $G_W$ is connected there exist $q>m$ and $x \in S_{W_{T_m}}$ such that $x \in S_{W_{T_q}}$. Join the vertices in $T_m$ and $T_q$ by an edge, say $e$, and define $\varphi(e)=x$. Using the construction in Lemma \ref{prop:existence_Type1subg}, extend this graph to a Type I subgraph $H$ with the one-one function $\varphi$. Then it follows that $H_W$ is strictly contained in $H$, contradicting the maximality of $H_W$. Thus we have obtained a contradiction.
\end{proof}

For $k_m=1$, it follows from Claim 1 that there exists $x \in S_{W_{T_m}}$ such that $x \in S_{W_{T_i}}$ for some $i<m$. Thus, once all $\pi_{T_i}$ such that $i<m$ are chosen, the $L$-value of the letter $x$ is fixed. Furthermore, by condition (iii) of Type I subgraph, each new letter in $S_{W_{T_m}}$ appears at least twice in $S_{W_{T_m}}$. Thus by Lemma \ref{lemma:choice of pi_t reordered}, the number of choices for $\pi_{T_m}$ is at most $B^{p}N^{\frac{p-1}{2}+1}= B^{pk_m}N^{\frac{pk_m+k_m}{2}}$. Hence, 
\begin{align*}
	\text{the number of choices for }(\pi_1,\pi_2,\ldots, \pi_k) &\leq B^{pk_1} N^{\frac{pk_1+k_1-1}{2}} \times B^{pk_2}N^{\frac{pk_2+k_2}{2}} \times \cdots \times B^{pk_r}N^{\frac{pk_r+k_r}{2}}  \\
	&= B^{pk}N^{\frac{pk+k-1}{2}},
\end{align*}
where the last equality follows since $k_1+k_2+\cdots+ k_r=k$. This proves the lemma.
\end{proof}




\section{ \textbf{Applications of Theorem \ref{thm: always normal}}}\label{sec:LES+example_ind}

In this section, we show that Theorem \ref{thm: always normal} can be used to establish CLT for the LES of a large class of random matrices for even degree monomial test functions.
In Section \ref{subsec:common_patterned_eg}, we prove LES results for classical patterned random matrices and in Section \ref{subsec:block_patterned_eg}, we prove LES results for block patterned matrices. In Section \ref{subsec:other_model_eg}, we discuss LES of $d$-disco matrix, the swirl matrix, checkerboard matrix and a class of Hankel-type matrices. Through out the section, we always treat the constants $c_1,c_2$ and the sets $\mathcal{Z}_n, \mathcal{S}_n$ as used in Theorem \ref{thm: always normal}, and we use the notations
	\begin{align*}
		\eta_{p} &=\frac{1}{\sqrt{\var\left(\Tr\left(\frac{A_n}{\sqrt{N}}\right)^p\right)}}\left(\Tr\left(\frac{A_n}{\sqrt{N}}\right)^p-\E \Tr \left(\frac{A_n}{\sqrt{N}}\right)^p\right),\\
		\xi_p &=\frac{1}{\sqrt{N}}\left(\Tr\left(\frac{A_n}{\sqrt{N}}\right)^p-\E \Tr \left(\frac{A_n}{\sqrt{N}}\right)^p\right),
\end{align*}
to denote the varying $p$ and fixed $p$ cases, respectively.

\subsection{Classical patterned random matrices}\label{subsec:common_patterned_eg}
In this subsection,  we prove CLT for the LES of classical patterned random matrices. For all the matrices considered in this subsection,  $N(n)=n$. We write the detailed proof for the non-banded case, but it can also be easily seen that the same arguments hold for the banded case as well as other choices of $\Delta_n$ satisfying Assumption \ref{assump: existence Delta}. The approach we adopt here is to present a brief literature overview on the LES of the patterned random matrix along with the new proof using Theorem \ref{thm: always normal}. 

\noindent \textbf{(i) Symmetric Toeplitz matrix:} Chatterjee  \cite{chatterjee2009fluctuations} was the first to study the LES of symmetric Toeplitz matrices with random entries. He showed that for test functions $\phi(x) = x^{p_n}$, where $p_n = o(\log n/ \log \log n)$, the LES of symmetric Toeplitz matrices with independent Gaussian input entries converges to a Gaussian distribution under the total variation norm. 
Later, Liu et al.  \cite{liu2012fluctuations} studied the LES of symmetric band Toeplitz matrices for non-Gaussian entries and diagonal entries as zero. They showed that for fixed polynomials, the LES converges in distribution to a Gaussian distribution. 
The techniques in \cite{liu2012fluctuations} are only applicable for band Toeplitz matrices with the bandwidth $b_n$ such that $b_n/n \rightarrow b$ as $n \rightarrow \infty$, and are not applicable for anti-band Toeplitz matrices. 

To simplify the calculations, we present the case of the usual symmetric Toeplitz matrix but the same idea also works for banded and anti-banded Toeplitz matrices of bandwidth $\Theta(n)$. Consider the sets
\begin{align*}
\mathcal{Z}_n &=\{0,1,\ldots,\lfloor n/2 \rfloor \} \text{ and}\\
\mathcal{S}_n &= \{(i,j):1 \leq i \leq \lfloor n/2 \rfloor , 1 \leq j \leq \lfloor n/2 \rfloor+i \} \\ & \hspace{30mm}\cup 
\{(i,j): \lfloor n/2 \rfloor+ 1\leq i \leq n, i+1-\lfloor n/2 \rfloor  \leq j \leq n \}.  
\end{align*} 
Then it is easy to see that (see Figure \ref{fig:Toeplitz_Hankel_Colour}(b)) that
\begin{align*}
&\# \{(i,j):L(i,j)=r_u\} \geq n/2 \text{ for all } r_u \in \ZZ_n \text{ and} \\
&\# (\mathcal{S}_n \cap \mathcal{R}_i) \geq n/2 \text{ for all rows } \mathcal{R}_i.
\end{align*}
 Now, Theorem \ref{thm: always normal} implies that $\eta_{p_n}$ converges in distribution to standard Gaussian distribution for even natural numbers $p_n=o(\log n / \log \log n)$. 
\begin{figure}[h!]
\centering
\includegraphics[width=160mm,height=80mm]{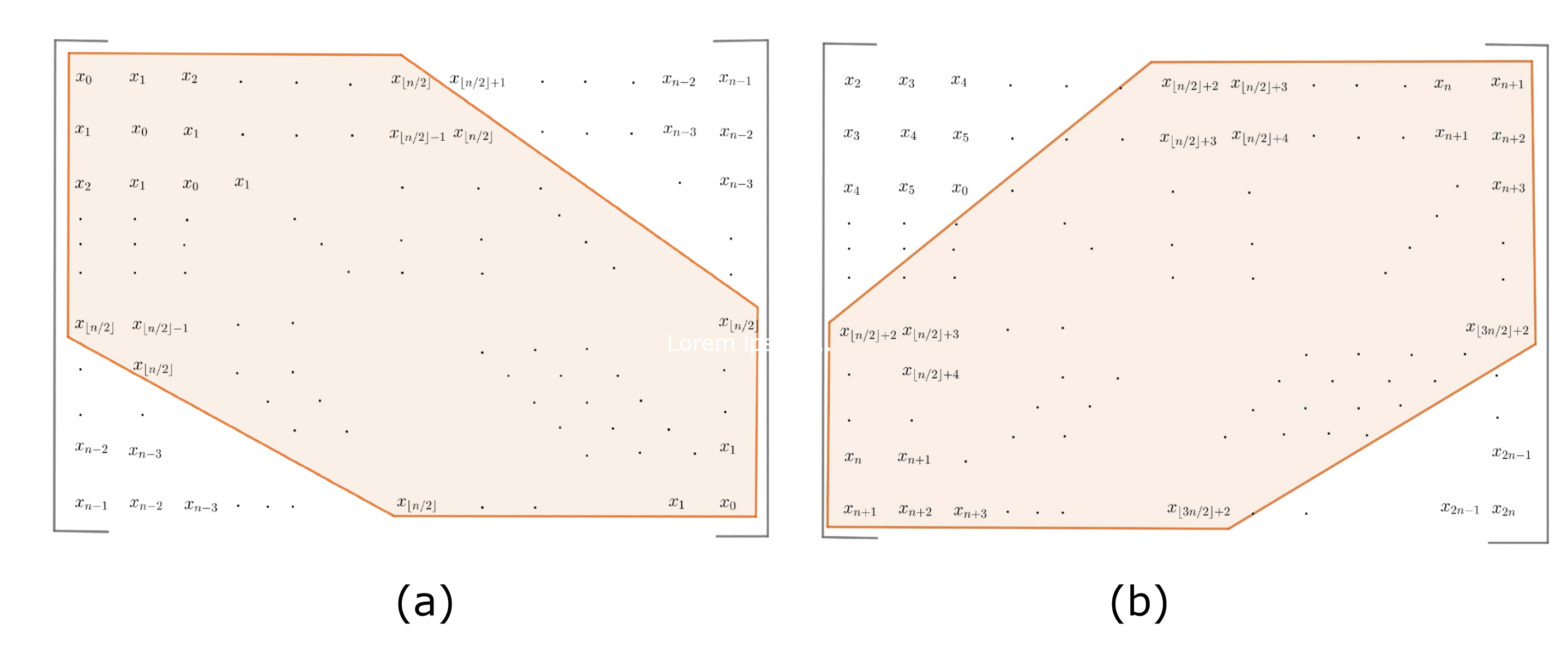}
\caption{Set $\mathcal{S}_n$ is colored in orange for (a) Toeplitz and (b) Hankel.}
\label{fig:Toeplitz_Hankel_Colour}
\end{figure}

Further, it follows that for fixed even $p$ with input entries obeying Assumption \ref{assum: 4 moments}, $\xi_p$ converges in distribution to a Gaussian distribution. For the odd case, the convergence is trickier. In \cite{liu2012fluctuations}, it was shown that for odd $p$, $\xi_p$ converges to the distribution $N(0,\sigma^2)+x_0$, where $N(0,\sigma^2)$ is a normal distribution. Clearly, the $k$-th moment of $N(0,\sigma^2)+x_0$ includes contribution from  all lower moments of $x_0$ as predicted by Theorem \ref{thm:unified_moments_p_odd}. 

\vskip2pt
\noindent \textbf{(ii) Hankel matrix:} It was proved in \cite{adhikari_saha2017} that for Hankel matrices with i.i.d. Gaussian entries and a sequence of even positive integers $p_n=o(\log n / \log \log n)$, $\eta_{p_n}$ converges in total variation norm to the standard Gaussian distribution. The LES of Hankel matrices with general entries was first studied in \cite{liu2012fluctuations}, and it was showed that for fixed even $p$, $\xi_p$ converges to a normal distribution. For odd $p$, the case is different from the Toeplitz case. It was proved in \cite{k+m+s_han_22}, that fixed odd $p$, $\xi_p$ does not converge to a Gaussian random variable. To apply Theorem \ref{thm: always normal}, we choose 
\begin{align*}
\mathcal{Z}_n &=\{\lfloor n/2 \rfloor +2, \lfloor n/2 \rfloor+3, \ldots, \lfloor 3n/2 \rfloor+2 \} \text{ and } \\
\mathcal{S}_n &= \{(i,j): \lfloor n/2 \rfloor+2 \leq i+j \leq \lfloor 3n/2 \rfloor+2 \}. 
\end{align*} 
As a result, we get that $c_1=c_2=1/2$ (see Figure \ref{fig:Toeplitz_Hankel_Colour} for a pictorial explanation). Also, we get that for random Hankel matrices with input entries obeying Assumption \ref{assum: subexponential} and any sequence of even positive integers $p_n=o(\log n/ \log \log n)$, $\eta_{p_n}$ converges in distribution to the standard Gaussian random variable.  

\vskip2pt
\noindent \textbf{(iii) Palindromic Toeplitz and Hankel matrices:}
Palindromic Toeplitz matrices are another important class of patterned random matrices, well-known for having a Gaussian limiting spectral distribution \cite{Massey_Miller_2007_Palin_Toeplitz}. Palindromic Toeplitz matrix is obtained from Toeplitz matrix by adding the additional condition $x_{i}=x_{n-1-i}$ (so that the first row is a palindrome). The techniques in \cite{liu2012fluctuations} used for Toeplitz matrices are not applicable for palindromic Toeplitz matrices and to the best of our knowledge, central limit theorem type results for the LES of palindromic Toeplitz matrix are not known. 
To apply Theorem \ref{thm: always normal}, we choose $\mathcal{Z}_n,\mathcal{S}_n$ as $\mathcal{Z}_n=\{0,1,2,\ldots, \frac{n}{2}\}$, $\mathcal{S}_n=[n]^2$ and $c_1=c_2=1$, we get that for Palindromic Toeplitz matrix, $\eta_{p_n}$ and $\xi_p$ converges to Gaussian distributions.

Palindromic Hankel matrices are obtained from Hankel matrices by imposing the additional condition $x_i=x_{n+3-i}$. Arguments similar to the Palindromic Toeplitz case can be employed for Palindromic Hankel matrix to show that for a sequence of even natural numbers $p_n=o(\log n / \log n \log n)$, the $\eta_{p_n}$ converges to the standard Gaussian distribution. The result for Palindromic Hankel matrices for fixed $p$ case is also a new addition to the literature.
\vskip2pt
\noindent \textbf{(iv) Generalized Toeplitz and Hankel matrices:} Several generalizations and alterations of Toeplitz and Hankel matrices, exists in the literature. We talk about the following generalizations introduced in \cite{Miller_Swanson_2015_General_Toeplitz}.
\begin{enumerate}[(a)]
\item Generalized Toeplitz matrix: For fixed positive integers $\alpha, \beta$, Generalized Toeplitz matrix is the $n \times n$ patterned random matrix given by the link function
$$L_{ \alpha, \beta}(i, j):= \begin{cases}\alpha i-\beta j & \mbox{if } i \leq j, \\ -\beta i+\alpha j & \mbox{if } i>j.\end{cases}$$
For Generalized Toeplitz matrix, we choose 
\begin{align*}
	\ZZ_n &=\{\alpha-\beta,\alpha-2\beta,\ldots,\alpha-\lfloor n/2 \rfloor \beta\},\\
	\mathcal{T}_n &=\{\left(1+k\beta/d,r+k\alpha/d\right)\in [n] \times [n], k \in \N, 0 \leq r \leq \lfloor n/2 \rfloor\}, d=\gcd(\alpha,\beta),\\
	\mathcal{S}_n &=\mathcal{T}_n \cup \mathcal{T}_n^C,\\
	c_1 &= \min \{\frac{dn}{2\alpha},\frac{dn}{2\beta}\} \text{ and } c_2= 1/2.
\end{align*} 
The above choice implies that for even $p_n=o(\log n /\log \log n)$, $\eta_{p_n}$ converges to a Gaussian distribution for Generalized Toeplitz matrix for all choices of positive integers $\alpha$ and $ \beta$.
\item Generalized Hankel matrix: For fixed positive integers $\alpha, \beta$, the link function is given by
$$L_{ \alpha, \beta}(i, j):= 
\begin{cases}\alpha i+\beta j &  \mbox{if } i \leq j, \\ \beta i+\alpha j & \mbox{if } i>j.
\end{cases}$$
For Generalized Hankel matrix, we can choose $\mathcal{Z}_n,\mathcal{S}_n$ in a similar fashion as Generalized Toeplitz matrix and this would imply that for Generalized Hankel matrices $\eta_{p_n}$ converges to Gaussian distribution for all sequences of even positive integers $p_n=o(\log n / \log  \log n)$.
\end{enumerate} 
\vskip2pt
\noindent \textbf{(v) Reverse circulant matrix and symmetric circulant matrix:} Reverse circulant and symmetric circulant matrices of dimension $n \times n$ is defined by the link functions
\begin{align*}
	L(i,j) &=(i+j-2)\bmod n, \\
	L(i,j) &= n/2-|n/2-|i-j||,
\end{align*} 
respectively. The LES of reverse circulant and symmetric circulant matrices were first studied in \cite{adhikari_saha2017}, and it is was proved that for symmetric circulant (similarly, reverse circulant) matrices with i.i.d. Gaussian entries, $\eta_{p_n}$ converges under total variation norm to standard Gaussian distribution for every sequence of positive (even positive) integers $p_n=o(\log n / \log \log n)$. For general entries, the convergence of $\xi_p$ for the above matrices was established in \cite{m&s_RcSc_jmp} (even $p$ for reverse circulant matrices). For reverse circulant matrices, $\xi_p$ converges to zero almost surely for odd values of $p$. Without the scaling $1/\sqrt{n}$, the limiting behavior of the LES of reverse circulant matrix depends on the parity of the order of matrix $n$, see Remark 21 of \cite{m&s_RcSc_jmp}.

The LES for reverse circulant and symmetric circulant matrices were studied in \cite{m&s_RcSc_jmp} using the eigenvalues formula well-known in literature. Thus, the technique of \cite{m&s_RcSc_jmp} cannot be utilized to obtain the LES of banded versions or other choices of $\Delta_n$. From Theorem \ref{thm: always normal}, one can derive the convergence of LES for banded and anti-banded, reverse circulant and symmetric circulant matrices.
To establish the convergence to Gaussian for general entries, we choose $\mathcal{Z}_n =\{0,1,\ldots,n-1\}$, $\mathcal{S}_n=[n]^2$ and the constants $c_1=c_2=1$ and apply Theorem \ref{thm: always normal}.

The new results of this subsection for the fixed even positive integer case are summarized in Table \ref{tab:Classical_new}.


\subsection{Block patterned matrices}\label{subsec:block_patterned_eg}

In this section, we consider different types of symmetric block patterned matrices, as defined in (\ref{eqn:defn_block_patterned}). To the best of our knowledge, all the results in this section are new to the literature.
All the block patterned matrices considered in this section will be of the form:
$$A=(B_{L_1(i,j)})_{i,j=1}^d \text{ with } B_\ell= (x_{\ell,L_2(i,j)})_{i,j=1}^m,$$
where $L_1, L_2$ are appropriate functions. 

Recall the definition of $\Pi_{L}(W)$ from \eqref{defn: Pi L(W)} and note that the cardinality of $\Pi_{L}(W)$ depends on $N$, where $[N]$ is the range set of the circuits in $\Pi_L(W)$. For the above block patterned random matrix, the range set of the circuits in $\Pi_{L_1}(W)$ is $[d]$, of circuits in $\Pi_{L_2}(W)$ is $[m]$, and of circuits in $\Pi_{L}(W)$ is $[md]$. We first prove the following equality:
\begin{proposition}\label{prop: Pi l,L1,L2}
For all $W \in \mathcal{W}_{p,k}$,
\begin{equation*}
	\#\Pi_L^*(W)=\#\Pi_{L_1}^*(W) \times \#\Pi_{L_2}^*(W).
\end{equation*}
\end{proposition}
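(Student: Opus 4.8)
The plan is to exhibit an explicit bijection between $\Pi_L^*(W)$ and the Cartesian product $\Pi_{L_1}^*(W) \times \Pi_{L_2}^*(W)$, using the fact that a block patterned matrix of the stated form has a product structure on its link function. First I would make precise the identification $[md] \cong [d] \times [m]$: write each index $a \in [md]$ uniquely as $a = (r-1)m + s$ with $r \in [d]$ and $s \in [m]$, and denote this correspondence $a \leftrightarrow (r,s)$ (this is exactly the indexing already used in the block Toeplitz example of the introduction). Under this identification one checks directly from \eqref{eqn:defn_block_patterned} and the form $A = (B_{L_1(i,j)})$, $B_\ell = (x_{\ell,L_2(i,j)})$, that the global link function $L$ satisfies
\begin{equation*}
	L\bigl((r_1,s_1),(r_2,s_2)\bigr) = \bigl(L_1(r_1,r_2),\, L_2(s_1,s_2)\bigr),
\end{equation*}
i.e. the $L$-value of an ordered pair splits as the pair of $L_1$- and $L_2$-values of the block- and within-block coordinates. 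The key consequence is that two pairs have equal $L$-value if and only if they have equal $L_1$-value \emph{and} equal $L_2$-value, because the input entries $\{x_{\ell, t}\}$ across different $\ell$ are independent patterned matrices (so distinct $\ell$ give genuinely distinct letters); this is what rules out accidental coincidences and is the one place the hypotheses on the block construction are genuinely used.

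Given this, the bijection is the obvious coordinate-splitting map. For a circuit $\pi: \{0,1,\dots,p\} \to [md]$ write $\pi(i) \leftrightarrow (\rho(i), \sigma(i))$ with $\rho(i) \in [d]$, $\sigma(i) \in [m]$; then $\rho$ and $\sigma$ are circuits of length $p$ on $[d]$ and $[m]$ respectively (the constraint $\pi(0) = \pi(p)$ passes to both coordinates). For a $k$-tuple $(\pi_1,\dots,\pi_k)$ I would send it to the pair $\bigl((\rho_1,\dots,\rho_k),\, (\sigma_1,\dots,\sigma_k)\bigr)$. I then need to verify two things: (a) if $(\pi_1,\dots,\pi_k) \in \Pi_L^*(W)$, then $(\rho_1,\dots,\rho_k) \in \Pi_{L_1}^*(W)$ and $(\sigma_1,\dots,\sigma_k) \in \Pi_{L_2}^*(W)$; and (b) conversely, given $(\rho_\bullet) \in \Pi_{L_1}^*(W)$ and $(\sigma_\bullet) \in \Pi_{L_2}^*(W)$, the recombined tuple $\pi_u(i) := $ the index corresponding to $(\rho_u(i), \sigma_u(i))$ lies in $\Pi_L^*(W)$. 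Direction (a) is immediate: $w_r[i] = w_s[j]$ forces equality of $L$-values, hence (by the splitting formula) equality of both $L_1$-values and $L_2$-values, which is exactly the defining $\Longrightarrow$ condition for $\Pi_{L_1}^*$ and $\Pi_{L_2}^*$. Direction (b) uses the splitting formula in reverse: $w_r[i] = w_s[j]$ gives equal $L_1$-values (from $\rho_\bullet \in \Pi_{L_1}^*(W)$) and equal $L_2$-values (from $\sigma_\bullet \in \Pi_{L_2}^*(W)$), so the paired values are equal, hence the $L$-values agree. These two maps are visibly mutual inverses, giving the bijection and therefore the claimed product formula for cardinalities.

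The one subtlety — and the step I would be most careful about — is the use of the $*$-version ($\Longrightarrow$) rather than the exact equivalence: the proposition is stated for $\Pi_L^*$ precisely because the analogous identity for $\Pi_L$ can fail, since a word-match forced in one coordinate but not the defining condition could create a cross-coordinate coincidence. Because $\Pi^*$ only demands the forward implication ``$w_r[i]=w_s[j] \implies L\text{-values equal}$'' and never the reverse, the bijection goes through cleanly for $\Pi^*$; I would include a short remark noting why the exact $\Pi_L$ version is not claimed. No other obstacles are expected — everything else is bookkeeping with the index identification $[md] \cong [d] \times [m]$.
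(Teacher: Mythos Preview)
Your proposal is correct and follows essentially the same approach as the paper's proof: both exhibit the bijection $\Pi_L^*(W)\to\Pi_{L_1}^*(W)\times\Pi_{L_2}^*(W)$ via the coordinate splitting $[md]\cong[d]\times[m]$, and verify well-definedness in both directions using the product formula $L\bigl((r_1,s_1),(r_2,s_2)\bigr)=\bigl(L_1(r_1,r_2),L_2(s_1,s_2)\bigr)$. Your added remark on why the identity is stated for $\Pi^*$ rather than $\Pi$ is a useful clarification that the paper does not make explicit.
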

\begin{proof}
We prove the proposition by showing that the map
\begin{align*}
	\Pi_L^*(W) &\rightarrow \Pi_{L_1}^*(W) \times \Pi_{L_2}^*(W) \\
	\pi_u(i) &\rightarrow (\pi_u^{(1)}(i),\pi_u^{(2)}(i)) \text{ for } u=1,2,\ldots, k
\end{align*}
is a bijection,	where 
\begin{align*}
	\pi_u^{(1)}(i)=& \Bigl\lfloor \frac{\pi_u(i)+m-1}{m} \Bigr\rfloor\ \text{ and } 
	\pi_u^{(2)}(i)=\begin{cases}
		\operatorname{rem}(\pi_u(i),m) &\text{ if } m \nmid \pi_u(i), \\
		m &\text{ if } m | \pi_u(i),
	\end{cases}
\end{align*}
where $\operatorname{rem}(a,b)$ denotes the reminder when $a$ is divided by $b$. Note that as the map $\pi_u:\{0,1,\ldots , p\} \rightarrow \{1,2,\ldots ,md\}$ is a circuit, $\pi_u^{(1)}:\{0,1,\ldots , p\} \rightarrow \{1,2,\ldots, d\}$ and $\pi_u^{(2)}:\{0,1,\ldots , p\} \rightarrow \{1,2,\ldots ,m\}$ are circuits. 

To prove that the map is well-defined, we use the following reasoning: note that for $W \in \mathcal{W}_{p,k}$ and $(\pi_1,\pi_2,\ldots , \pi_k) \in \Pi_{L}^*(W)$, 
\begin{align}\label{eqn:Pi L,L1,L2}
	w_{u_1}[i_1]=w_{u_2}[i_2] \implies &L(\pi_{u_1}(i_1-1),\pi_{u_1}(i_1))= L(\pi_{u_2}(i_2-1),\pi_{u_2}(i_2)) \nonumber\\
	\iff &L_1(\pi_{u_1}^{(1)}(i_1-1),\pi_{u_1}^{(1)}(i_1))=L_1(\pi_{u_2}^{(1)}(i_2-1),\pi_{u_2}^{(1)}(i_2)) \text{ and } \nonumber\\ &L_2(\pi_{u_1}^{(2)}(i_1-1),\pi_{u_1}^{(2)}(i_1))=L_2(\pi_{u_2}^{(2)}(i_2-1),\pi_{u_2}^{(2)}(i_2)).
\end{align}
This implies that $(\pi_1^{(1)},\pi_2^{(1)},\ldots , \pi_k^{(1)}) \in \Pi_{L_1}^*(W)$ and $(\pi_1^{(2)},\pi_2^{(2)},\ldots , \pi_k^{(2)}) \in \Pi_{L_2}^*(W)$. 
Note that the values of $\pi_u^{(1)}(i)$ and $\pi_u^{(2)}(i)$ determine the value of $\pi_u(i)$ uniquely and therefore the map is one-one. To prove that the map is onto consider $(\pi_1^{(1)},\pi_2^{(1)},\ldots , \pi_k^{(1)}) \in \Pi_{L_1}^*(W)$ and $(\pi_1^{(2)},\pi_2^{(2)},\ldots , \pi_k^{(2)}) \in \Pi_{L_2}^*(W)$. Note that for proving that the map is onto, we need to show that $(\pi_1,\pi_2,\ldots,\pi_k)$, where $\pi_u(i)=(\pi_u^{(1)}(i)-1)m+\pi_u^{(2)}(i)$, is an element of $\Pi_{L}^*(W)$. This is equivalent to proving the first line of \eqref{eqn:Pi L,L1,L2}, and that follows from swapping the two sides of the if and only if condition in \eqref{eqn:Pi L,L1,L2}.
\end{proof}

Now we look at special types of block patterned random matrices and obtain their LES results.
\vskip2pt
\noindent \textbf{(i) Block Wigner matrix:} Let $m: \N \to \N$ be an increasing function. Consider the block Wigner matrix given by
$$	A_n=\left[\begin{array}{cccccc}
B_{11}(n) & \cdots & B_{1 d}(n) \\
\vdots & \ddots &\vdots \\
B_{1 d}(n) &  \cdots & B_{dd}(n)
\end{array}\right],
$$
where $d \in \N$ is fixed and for each $n \in \N$, $\{B_{i,j}(n)\}$ are $m(n) \times m(n)$ independent patterned random matrices with link function $L_2$. 

Observe that for each sentence $W \in \WW_{p,k}$, using Proposition \ref{prop: Pi l,L1,L2},
\begin{equation*}
\frac{1}{N^{p+1}} \# \Pi_{L}^*(W)=\frac{1}{d^{p+1}} \# \Pi_{L_1}^*(W) \times \frac{1}{(m(n))^{p+1}} \# \Pi_{L_2}^*(W).
\end{equation*}
Here $L_1$ is the link function of Wigner matrix. Note that the tuple $(\pi_1,\pi_2)$ such that $\pi_u(j)=1$ for all $u,j$, belongs to $\Pi_{L_1}^*(W)$ for all $W$ and since the number of blocks of $A_n$ is fixed, $\frac{1}{d^{p+1}} \# \Pi_{L_1}^*(W)$ is a constant independent of $n$ and strictly greater than zero. 

Suppose $\{B_{ij}(n)\}$ belong to the class of classical patterned random matrices discussed in Section \ref{subsec:common_patterned_eg}.
Then it follows from the discussions in Section \ref{subsec:common_patterned_eg} that $ \liminf \frac{1}{(m(n))^{p+1}} \# \Pi_{L}^*(W)>0$ for the sentence $W$ defined in \eqref{eqn: condition non-zero P24}, and as a result by Remark \ref{remark:Pi_Ln,Delta_n-star}
$$\liminf \frac{1}{N^{p_n+1}} \# \Pi_{L}(W)>0$$
for the sentence $W$ defined in \eqref{eqn: condition non-zero P24}. 
Therefore $$\liminf \frac{1}{N}\varA = \liminf \frac{1}{N^{p_n+1}} \sum_{W \in SP(p_n,2)} \# \Pi_{L}(W)>0.$$ Hence, by Theorem \ref{thm:converg_LES_even}, it follows that $\eta_{p_n}$ converges to standard Gaussian distribution for even positive integers $p_n=o(\log md / \log \log md)$. Similar conclusion also holds for fixed positive integer $p$.



\vskip2pt
\noindent \textbf{(ii) Block Toeplitz and Block Hankel matrices:} Let $\{B_i: i \in \Z_+ \}$ be a sequence of independent random matrices of size $m \times m$ of the form \eqref{eqn:A_ij_general}. The block Toeplitz matrix of size $md \times md$ with blocks $(B_i)$, is defined as $T_{i,j}=B_{i+j}$. For block Toeplitz matrix with independent Wigner blocks of fixed size, the limiting spectral distribution (LSD) was derived in \cite{Basu_Bose_Ganguly_Block_Toeplitz_LSD}, and LSD for block size growing to infinity was derived in \cite{Li_Liu_wang_Block_Toepliz_2011},\cite{far_spectra_block_toeplitz}. The fluctuations of the LES of block Toeplitz and block Hankel matrices have not been studied yet. 

Consider a block Toeplitz matrix with independent Wigner blocks of fixed size $m$ and $d \rightarrow \infty$. It is easy to see that the argument for Toeplitz matrices works for this case and conditions (i) and (ii) of Theorem \ref{thm: always normal} hold for block Toeplitz matrices as well. From Proposition \ref{prop: Pi l,L1,L2}, we have
\begin{equation*}
\frac{1}{N^{p+1}} \# \Pi_{L}^*(W)=\frac{1}{d^{p+1}} \# \Pi_{L_1}^*(W) \times \frac{1}{m^{p+1}} \# \Pi_{L_2}^*(W).
\end{equation*} 
Note that here $m$, the size of the blocks are fixed, and therefore $\frac{1}{m^{p+1}} \# \Pi_{L_2}^*(W)$ is a constant strictly greater than zero. Further, since $L_1$ is the link function of Toeplitz matrix, we have that $\liminf\frac{1}{d^{p+1}}\Pi_{L_1}^*(W)>0$ is greater than zero for the sentences $W$ defined in \eqref{eqn: condition non-zero P24}.
Now following the argument in the block Wigner case, it follows that for block Toeplitz matrices with independent Wigner blocks of fixed size, $\eta_{p_n}$ converges in distribution to the standard Gaussian distribution for sequence of even positive integers $p_n=o(\log n / \log \log n)$. 

For the case where the blocks are other classical patterned random matrices given in Section \ref{subsec:common_patterned_eg}, a broader result is possible. Consider the block Toeplitz matrix of size $d(n) \times d(n)$ with symmetric circulant blocks of size $m(n) \times m(n)$. In this case, note that both $\liminf \frac{1}{m(n)^{p+1}} \# \Pi_{L_2}^*(W)$ and $\liminf \frac{1}{d(n)^{p+1}} \# \Pi_{L_1}^*(W)$ are greater than zero for the case $d,m \rightarrow \infty$ as well. Thus, it follows that for all choice of increasing functions $d,m: \N \rightarrow \N$, $\eta_{p_n}$ converges to the standard Gaussian distribution for even poitive integer sequence $p_n=o(\log n /\log \log n)$. The same conclusion also hold for other patterned blocks, such as Hankel, reverse circulant or circulant.

For block Hankel matrices too, the same conclusions hold, and the calculations are similar to the block Toeplitz case. 

\vskip2pt
\noindent \textbf{(iii) Block versions of circulant-type matrix:} The LSD of block symmetric circulant matrix with Wigner blocks was derived in \cite{Oraby_block_circulant},\cite{Murat_Kopp_Miller_Block_Circulant_2013}. The problem of fluctuations of LES for block symmetric circulant matrices has not been addressed yet. Clearly, Theorem \ref{thm:converg_LES_even} can be applied here for different choices of the blocks, such as Wigner blocks, Toeplitz blocks or Hankel blocks. One can use a similar idea as used for Block Toeplitz matrices to conclude the LES results for block symmetric circulant matrices and block reverse circulant matrices.
Table \ref{table:block} gives the different possibilities of block patterned matrices for which Theorem \ref{thm:converg_LES_even} is applicable.


\subsection{Extension to other models}\label{subsec:other_model_eg}

In this section, we discuss some other models of random matrices (apart from the patterned matrices) for which Theorem \ref{thm: always normal} can be applied. The LES for the random matrices discussed in this section is new to the literature.
\vskip2pt
\noindent \textbf{(i) $d$-disco matrix:} The random version of $d$-disco matrix was introduced in \cite{blackwell_etal_disco_2021} to study the transition of limiting spectral distribution from Gaussian (for Palindromic Toeplitz) to semi-circle law (for Wigner). 
Consider a symmetric patterned random matrix $A$ of dimension $M \times M$ and a sequence of independent patterned random matrices $\{B_n: n \in \N\}$, independent of $A$, such that $B_n$ is of the order $2^n M \times 2^n M$. The $d$\textit{-disco} of $A$ and $\{B_n\}$ is defined inductively as
$$\mathcal{D}_n=\left(\begin{array}{cc}
\mathcal{D}_{n-1} & B_{n-1} \\
B_{n-1} & \mathcal{D}_{n-1}
\end{array}\right) \mbox{ with } \mathcal{D}_0=A.$$ 

Suppose the link functions of $\{B_n\}$ obey Assumption \ref{Condition B}, the entries of $A,B_n$ obey Assumption \ref{assum: subexponential} and there exist fixed constants $c_1,c_2>0$, independent of $n$, such that $B_n$ obeys conditions (i) and (ii) of Theorem \ref{thm: always normal}. Then it follows that $\mathcal{D}_n$ also obeys conditions (i) and (ii) of Theorem \ref{thm: always normal}. In particular, it can easily be seen that the new constants are $c_1^\prime= c_1/2$ and $c_2^\prime=c_2/2$. Hence if $B_n$ belongs to the class of matrices discussed in Section \ref{subsec:common_patterned_eg}, then by Theorem \ref{thm: always normal} we get that for even positive integer sequence $p_n=o(\log n /\log \log n)$, $\eta_{p_n}$ converges to a Gaussian distribution for $d$-disco matrices.

\vskip2pt
\noindent \textbf{(ii) Checkerboard matrix:}
Let $\{x_{i}: i \in \Z^d\}$ be an input sequence. For  $k \in \mathbb{N}, w \in \mathbb{R}$, and a link function $L_n : \mathbb{Z}^2 \rightarrow \Z^d$, the $N \times N$ $(k, w)$-checkerboard ensemble is the matrix $M=\left(m_{i j}\right)$ given by
$$
m_{i j}= \begin{cases}x_{L_n(i,j)} & \text { if } i \not \equiv j \bmod k, \\ w & \text { if } i \equiv j \bmod k .\end{cases}
$$
 In \cite{Miller_etal_Checkerboard_2018_RMTA}, Burkhardt et al. introduced Checkerboard matrix and studied its LSD.

For $w=0$, the checkerboard matrix is a special case of the matrix defined in \eqref{eqn:A_ij_general}. It can be seen that when $L_n$ is one of the classical link functions discussed in Section \ref{subsec:common_patterned_eg}, then the conditions of Theorem \ref{thm: always normal} is satisfies. Therefore, it follows that for checkerboard matrices with classical link functions and $p_n$ a sequence of even positive integers of order $o(\log n /\log \log n)$, $\eta_{p_n}$ converges to standard Gaussian distribution for $(k,0)$-checkerboard matrices with appropriate assumptions on input entries. 
\vskip2pt
\noindent \textbf{(iii) Swirl Matrix:} The swirl of two $n \times n$ matrices $A$ and $X$ is defined as
\begin{equation*}
\operatorname{swirl}(A,X)=\left(\begin{array}{cc}
	AX & A \\
	XAX & XA
\end{array}\right).
\end{equation*}
Swirl matrices were introduced in \cite{Dunn_Miller_Swirl_2022} to study the LSD of circulant Hankel matrices. Note that when $X$ is the backward identity permutation matrix ($X_{ij}=\delta_{n}(i+j)$) and $A$ is a bisymmetric (symmetric and centrosymmetric) patterned random matrix. Then $\operatorname{swirl}(A,X)$ is a symmetric patterned random matrix. Suppose the link function of $A$ obeys Assumption \ref{Condition B}, the entries of $A$ obey Assumption \ref{assum: 4 moments} and $A$ obeys conditions (i) and (ii) of Theorem \ref{thm: always normal}. Then it follows that $\operatorname{swirl}(A,X)$ also obeys all the conditions of Theorem \ref{thm: always normal}, and as a result for even positive integer sequence $p_n=o(\log n / \log \log n)$, $\eta_{p_n}$ converges to standard Gaussian distribution for $\operatorname{swirl}(A,X)$.
\vskip2pt
\noindent\textbf{(iv) Hankel-type matrices:} Hankel matrices are closely related to reverse circulant matrices. The following family of Hankel-type matrices given by the link function
$$L_\theta(i,j)=(i+j) \bmod \lfloor n/\theta \rfloor ; \theta \in (0,\infty), $$ was introduced in \cite{Basak_Bose_Sunder_Hankeltype_2015} to study the transition from the LSD of Hankel matrix to LSD of reverse circulant matrix. Note that for $\theta < 1/2$, $L_\theta$ is the link function of Hankel matrices and for $\theta=1$, $L_\theta$ is the link function of reverse circulant matrix. 

The technique used to establish the convergence of $\eta_{p_n}$ for Hankel matrices in Section \ref{subsec:common_patterned_eg} also works for fixed $\theta$. It therefore follows from Theorem \ref{thm: always normal} that $\eta_{p_n}$ converges to standard Gaussian distribution for even positive integers $p_n=o(\log n/ \log \log n)$, for all fixed $\theta$.

\section*{\textbf{Appendix}} 

In this section, we prove Remark \ref{rem:normalization sqrt N} for some well-known patterned random matrices. Note that to prove Remark \ref{rem:normalization sqrt N}, it is sufficient to show that $\frac{1}{N}\varAp$ converges to a constant, and the remark follows from Slutsky's theorem. Note that from \eqref{eqn:var=1}
\begin{equation*}
	\frac{1}{N}\varAp=\frac{1}{N^{p+1}} \sum_{W \in SP(p,2)} \sum_{\Pi_{L_n,\Delta_n}(W)} \mathbb{E} \big( \prod_{i=1}^2 (x_{\pi_{i}}- \mathbb{E}x_{\pi_{i}} )\big)+o(1).
\end{equation*}
For input entries obeying Assumption \ref{assum: 4 moments} and for a subsentence $W_C$ of $W$, 
\begin{align*}
	\E \left(\prod_{ i=1}^2 x_{\pi_i}-\E x_{\pi_i}\right)=\begin{cases}
		1 &\text{ if } W_C \in \PP_{2}(p,p) \\
		\kappa-1 &\text{ if } W_C \in \PP_{2,4}(p,p).
	\end{cases}
\end{align*}
Hence, to prove the convergence of $\frac{1}{N}\varAp$, it is sufficient to show that
$\lim _{n \rightarrow \infty} \frac{1}{N^{p+1}} \# \Pi_{L_n, \Delta_n}(W)$ exists for all $W \in P_2(p, p) \cup P_{2,4}(p, p)$. 

Let $N: \N \rightarrow \N$ be a strictly increasing function and $\Delta_n \subseteq [N(n)]^2$ for each $n$. We make the following assumption on the sequence of sets $\{\Delta_n\}_{n \geq 1}$.
\begin{assumption}\label{assump: existence Delta}
	There exists a set $\Delta \subseteq [0,1]^2$ such that $\mathbb{U}_n\left((\frac{\Delta_n}{N} \setminus \Delta) \cup (\Delta \setminus \frac{\Delta_n}{N})\right)$ converges to zero as $n$ goes to infinity, where $\mathbb{U}_n$ is the uniform measure on $\{\frac{1}{N},\frac{2}{N},\ldots, 1\}^2$.
\end{assumption}

The following theorem helps us to prove the convergence of $\frac{1}{N}\varAp$ for the classical patterned random matrices and their banded versions.

\begin{theorem}\label{thm:Lij_secondmoment}
	Let $\left\{L_n\right\}$ be a sequence of link functions in one of the following forms:
	\begin{enumerate}[(i)]
		\item $L_n(i, j)=i \pm j.$
		\item $L_n(i, j)=|i - j|$.
		\item $L_n(i, j)=(i \pm j) \bmod N$ or $|i \pm j| \bmod N$, where $N$ is the dimension of the matrix.
	\end{enumerate}
	Suppose $\{\Delta_n\}$ obeys Assumption \ref{assump: existence Delta}.
	Then for each $W \in \PP_2(p, p) \cup \PP_{2,4}(p, p)$,
	$$\theta(W)=\lim _{n \rightarrow \infty} \frac{1}{N^{p+1}} \# \Pi_{L_n,\Delta_n}(W) \text { exists. }$$
\end{theorem}
\begin{proof}[Proof of (i)]
	We shall show that the result is true for $L_n(i,j)=i+j$ and the proof for $L_n(i,j)=i-j$ follows along similar lines.
	For the purpose of the proof, we denote
	\begin{equation*}
		v_{u,j}=\frac{\pi_{u}(j)}{N},
	\end{equation*}
	for $(\pi_1,\pi_2) \in \Pi_{L_n,\Delta_n}^*(W)$.
	
	We prove the theorem for $W \in \PP_{2,4}(p,p)$ and $W \in \PP_{2}(p,p)$ separately. 
	First, fix $W=(w_1,w_2) \in \PP_{2,4}(p,p)$. 
	Since $L_n(i,j)=i+j$, it follows that if $w_{u_1}[j_1]=w_{u_2}[j_2]$, then
	\begin{equation}\label{eqn:sys_of_eqn_vuj_PP2,4}
		v_{u_1,j_1-1}+ v_{u_1,j_1}= v_{u_2,j_2-1}+ v_{u_2,j_2}.
	\end{equation}
	Note that since $\pi_u(j) \in [N]$, the value of $v_{u,j}$ must be an integer multiple of $1/N$ less than or equal to one. Also observe that since $W \in \PP_{2,4}(p,p)$, \eqref{eqn:sys_of_eqn_vuj_PP2,4} is a system of equations with $p+1$ equations, where $(p-2)$ equations correspond to blocks of size two and three equations correspond to the block of size four. 
	Additionally, the following equations are also satisfied.
	\begin{align}\label{eqn:pi_1,pi_2,PP2,4}
		v_{1,0} =v_{1,p}, \
		v_{2,0}=v_{2,p}.
	\end{align}
	Our aim is to find all values of $v_{u,j}$, multiples of $1/N$, such that \eqref{eqn:sys_of_eqn_vuj_PP2,4} and \eqref{eqn:pi_1,pi_2,PP2,4} are satisfied. Recall the definition of generating vertices from Definition \ref{defn:generating_vertex}. Define
	\begin{equation*}
		S=\{\text{set of all generating vertices of } W\}
	\end{equation*}
	and let $v_S=\{v_{u,j}:(u,j) \in S\}$.
	Note that the cardinality of $S$ is $p+1$. We make the following claim:
	
		\noindent \textbf{Claim 1.}	For $u=1,2$ and $0 \leq j \leq p$, there exists a linear function $f_{u,j}$ with integer coefficients such that $v_{u,j}=f_{u,j}(v_S)$.
	\begin{proof}[Proof of Claim 1]
		Note that the claim holds trivially for all generating vertices $(u,j)$. Let $(u_1,j_1)$ be the first non-generating vertex with respect to the dictionary order. Then there exists a generating vertex $(u_0,j_0)$ such that $v_{u_0,j_0-1}+ v_{u_0,j_0}= v_{u_1,j_1-1}+ v_{u_1,j_1}$. Since $(u_1,j_1)$ is the first non-generating vertex, $v_{u_0,j_0-1},v_{u_0,j_0}$ and $v_{u_1,j_1-1}$ are elements of $v_S$ and therefore $v_{u_1,j_1}=f_{u_1,j_1}(v_S)$, where $f_{u_1,j_1}(v_S)$ is a linear function with integer coefficients.
		
		Now suppose $(u_1,j_1)$ is a non-generating vertex and for every vertex $(u,j)$ such that $(u,j)<(u_1,j_1)$ with respect to the dictionary order, there exist linear functions $f_{u,j}$ with integer coefficients such that $v_{u,j}=f_{u,j}(v_S)$. Further note that there exists a generating vertex $(u_0,j_0)$ such that $v_{u_0,j_0-1}+ v_{u_0,j_0}= v_{u_1,j_1-1}+ v_{u_1,j_1}$. Since $v_{u_0,j_0-1},v_{u_0,j_0}$ and $v_{u_1,j_1-1}$ are of the form $f_{u,j}(v_S)$, the claim follows.
	\end{proof}
	Now, we use Claim 1 to show that the limit $\lim _{n \rightarrow \infty} \frac{1}{N^{p+1}} \# \Pi_{L_n,\Delta_n}^*(W)$ exists. 
	Suppose elements of $\{v_S\}$ are chosen independently as  multiples of $1/N$, and each element lies between 0 and 1. Define $v_{u,j}=f_{u,j}(v_S)$, where $f_{u,j}$ is as defined in Claim 1. Then, note that by the construction of $f_{u,j}$, $\{v_{u,j}: u=1,2 , 0 \leq j \leq p\}$ obeys the system of equations \eqref{eqn:sys_of_eqn_vuj_PP2,4}. Further, since all elements of $v_S$ are multiples of $1/N$, it follows from Claim 1 that every $v_{u,j}$ is a multiple of $1/N$. 
	Hence, to check if $\{\pi_u(j)=Nv_{u,j}\}$ are elements of $\Pi_{L_n,\Delta_n}^*(W)$, it is sufficient to check the following: 
	\begin{enumerate}[(i)]
		\item $0 < f_{u,j}(v_S) \leq 1$ for all $u,j$,
		\item  the set of equations \eqref{eqn:pi_1,pi_2,PP2,4} are satisfied, and
		\item  $(v_{u,j-1},v_{u,j}) \in \frac{\Delta_n}{N}$ for $u=1,2$ and $1 \leq j \leq p$.
	\end{enumerate}
	Therefore, we get that $\#\frac{1}{N^{p+1}}\Pi_{L_n,\Delta_n}^*(W)$ is the Riemann sum of the integral
	\begin{align*}
		\int_{[0,1]^{p+1}} \prod_{j=1 \atop u=1,2}^{p} &\chi_{[0,1]} \left(f_{u,j}(v_S)\right) \delta(f_{1,p}(v_S)=v_{1,0})\\ &\times\delta(f_{2,p}(v_S)=v_{2,0})
		\prod_{j=1 \atop u=1,2}^p \chi_{\Delta}\big(f_{u,j-1}(v_S),f_{u,j}(v_S)\big)dv_S,
	\end{align*}
	with an error term bounded by $2 \times \frac{\#\left((\Delta_n \setminus \Delta) \cup (\Delta \setminus \Delta_n)\right)}{N^2}$. Since $\{\Delta_n\}$ obeys Assumption \ref{assump: existence Delta}, it follows that $\frac{1}{N^{p+1}}\#\Pi_{L_n,\Delta_n}^*(W)$ converges to the above integral as $n \rightarrow \infty$. Now, by Remark \ref{remark:Pi_Ln,Delta_n-star}, the result follows.
	
	Next, we consider $W=(w_1,w_2) \in \PP_2(p,p)$. Note that if $w_{u_1}[j_1]=w_{u_2}[j_2]$ and $(\pi_1,\pi_2) \in \Pi_{L_n,\Delta_n}^*(W)$, then \eqref{eqn:sys_of_eqn_vuj_PP2,4} and \eqref{eqn:pi_1,pi_2,PP2,4} are satisfied. Let $\{(1,r),(2,r^\prime)\}$ be the last cross-matched block in $W$ (i.e. $r$ is the largest number such that $(1,r)$ is an element of a cross-matched block). For $(w_1,w_2) \in \PP_{2}(p,p)$, we consider the following ordering `$\prec$' on $\{(u,j):u=1,2;j=0,1,2,\ldots, p\}$ given by
	\begin{align}\label{eqn:ordering_PP2}
		(1,0) \prec (1,1)\prec(1,2) \prec\cdots \prec(1,r-1) &\prec(1,p)\prec  (1,p-1)\prec\cdots \prec(1,r)  \nonumber \\
		&\prec(2,0)\prec(2,1)\prec \cdots \prec(2,p).
	\end{align}
	We shall use $(i,j)\preceq (r,s)$ to denote that either $(i,j)\prec (r,s)$ or $(i,j)=(r,s)$, i.e. $i=r$ and $j=s$.
	Recall from Definition \ref{defn:generating_vertex} that generating vertices are determined by the total ordering on $\{(u,j):u=1,2;j=0,1,2,\ldots, p\}$. Consider the generating vertices of $W$ with respect to the ordering in \eqref{eqn:ordering_PP2} and define 
	\begin{align}\label{eqn:set S_PP2(p,p)}
		\mathcal{S} = \{(1,j): j<r \text{ and }  &(1,j) \text{ is a generating vertex}\}\nonumber 
		\cup \{(1,j-1): j > r \text{ and }  (1,j) \text{ is }\\ & \text{ a generating vertex}\} \cup \{(2,j): (2,j) \text{ is a generating vertex}\}.
	\end{align}
	We define $v_\mathcal{S}=\{v_{u,j}=\pi_u(j)/N: (u,j) \in \mathcal{S}\}$, where $\mathcal{S}$ is as defined above. Similar to the case $W \in \PP_{2,4}(p,p)$, the following claim holds.
	
	\vskip2pt
	\noindent \textbf{Claim 2.}		For $u=1,2$ and $0 \leq j \leq p$, there exists a linear function $f_{u,j}$, with integer coefficients such that $v_{u,j}=f_{u,j}(v_\mathcal{S})$, where $\mathcal{S}$ is as defined in \eqref{eqn:set S_PP2(p,p)}.
\begin{proof}[Proof of Claim 2]
	Note that the claim holds trivially for vertices $(u,j) \in \mathcal{S}$. In the proof of this claim, we use the total ordering `$\prec$' defined in \eqref{eqn:ordering_PP2}. Similar to the proof of Claim 1, for all non-generating vertices $(1,j)$ such that $j \leq r-1$, it can be shown that $v_{1,j}=f_{1,j}(v_\mathcal{S})$ where $f_{1,j}$ is a linear function with integer coefficients. 
	
	We choose $v_{1,p}=v_{1,0}$ due to \eqref{eqn:pi_1,pi_2,PP2,4}. Now if $(1,p)$ is a generating vertex, then by the definition of $\mathcal{S}$, $v_{1,p-1} \in v_\mathcal{S}$ and the claim holds for $(1,p-1)$. Alternatively, if $(1,p)$ is not a generating vertex with respect to $\prec$, then there exists a generating vertex $(1,j_0)\prec(1,p)$ such that  $v_{1,j_0-1}+v_{1,j_0}=v_{1,p-1}+v_{1,p}$. Since it is already shown that for all $(1,j) \preceq (1,p)$, $v_{1,j}=f_{1,j}(v_\mathcal{S})$ where $f_{1,j}$ are linear functions with integer coefficients, it follows that $v_{1,p-1}$ is also of the form given in Claim 2. Next, consider $(1,p-1)\prec (1,j) \preceq (1,r)$ and suppose $v_{1,j^\prime}=f_{1,j^\prime}(v_\mathcal{S})$ for all $(1,j^\prime) \prec (1,j)$. For $(1,j) \notin \mathcal{S}$, it follows from the definition of $\mathcal{S}$ that there exists $(1,j_0) \prec (i,j)$ such that $v_{1,j_0-1}+v_{1,j_0}=v_{1,j}+v_{1,j+1}$. Note that as $v_{1,j_0-1}, v_{1,j_0},v_{1,j+1} \prec v_{1,j}$, the claim follows for $(1,j)$. The case of $(2,1) \preceq (2,j) \preceq (2,p)$ is similar to the case $(1,1) \preceq (1,j) \preceq (1,r-1)$ and hence the claim is proved for all $(u,j)$.
\end{proof}
Note that the cardinality of the set $\mathcal{S}$ is $p+1$, as $(1,r)$ is a generating vertex but there is no vertex in $\mathcal{S}$ that corresponds to $(1,r)$. Further, by construction, all equations in \eqref{eqn:sys_of_eqn_vuj_PP2,4} are satisfied and additionally the equation $v_{1,0}=v_{1,p}$ is also satisfied. Therefore, we get that $\#\frac{1}{N^{p+1}}\Pi_{L_n,\Delta_n}^*(W)$ is the Riemann sum of the integral
\begin{align*}
	\int_{[0,1]^{p+1}} \prod_{j=1 \atop u=1,2}^{p} &\chi_{[0,1]} \left(f_{u,j}(v_\mathcal{S})\right) \delta(f_{2,p}(v_\mathcal{S})= v_{2,0}) \prod_{j=1 \atop u=1,2}^p \chi_{\Delta}\left(f_{u,j-1}(v_\mathcal{S}),f_{u,j}(v_\mathcal{S})\right)dv_\mathcal{S},
\end{align*}
with an error term bounded by $2 \times \frac{\#\left((\Delta_n \setminus \Delta) \cup (\Delta \setminus \Delta_n)\right)}{N^2}$. Therefore, $\frac{1}{N^{p+1}}\#\Pi_{L_n,\Delta_n}^*(W)$ converges to the above integral as $n \rightarrow \infty$. Now, it is sufficient to show that this implies that  $\frac{1}{N^{p+1}}\Pi_{L_n}(W)$ converges. Recall the definition of $Q(W)$ from \eqref{eqn:differnce Pil,PiL star}.
For $W \in \PP_{2}(p,p)$, note that it is possible for $\tilde{W} \in Q(W)$ to belong to $\PP_{2,4}(p,p)$. Hence, the cardinality of the quantity in the right-hand side of \eqref{eqn:differnce Pil,PiL star} could be of the order $\Theta(N^{p+1})$. Suppose $\lim_{n \rightarrow \infty} \frac{1}{N^{p+1}}\# \Pi_{L_n}(W)$ exists for all $W \in \PP_{2,4}(p,p)$. Then it follows that the cardinality of the right hand side of \eqref{eqn:differnce Pil,PiL star} converges. Hence for all $W \in \PP_2(p,p)$, $\frac{1}{N^{p+1}}\# \Pi_{L_n}(W)$ exists provided $ \lim_{n \rightarrow \infty} \frac{1}{N^{p+1}}\# \Pi_{L_n}^*(W)$ exists.

Hence, it follows that  $\frac{1}{N^{p+1}}\Pi_{L_n}(W)$ converges.
\end{proof}
\begin{proof}[Proof of (ii) and (iii)]
For the case $L_n(i,j)=|i - j|$, a sentence $W=(w_1,w_2) \in \PP_{2}(p,p) \cup \PP_{2,4}(p,p)$ and $(\pi_1,\pi_2) \in \Pi_{L_n,\Delta_n}^*(W)$, note that if $w_{u_1}[j_1]=w_{u_2}[j_2]$, then $L_n(\pi_{u_1}(j_1-1),\pi_{u_1}(j_1))=L_n(\pi_{u_2}(j_2-1),\pi_{u_2}(j_2))$, i.e. $\pi_{u_1}(j_1-1)-\pi_{u_1}(j_1)=\pm \left(\pi_{u_2}(j_2-1)-\pi_{u_2}(j_2)\right)$. For each equation, fix a choice from $\{+1,-1\}$. Note that in this case, the number of distinct choices are $2^p$ for $W \in \PP_{2}(p,p)$ and $2^{p+1}$ for $W \in \PP_{2,4}(p,p)$. For each choice, we have a system of equations, as \eqref{eqn:sys_of_eqn_vuj_PP2,4}, and the number of solutions of the system of equation can be approximated by a Riemann sum as in part (i). This implies that $\lim_{n\to\infty}\frac{1}{N^{p+1}}\#\Pi^*_{L_n,\Delta_n}(W)$ is a finite sum of integrals. 

For the case of $L_n=(i+j)\mod N$, note that the system of equations can be written as
\begin{align}\label{eqn:sys_of_eqn_vuj_PP2,4_(iii)}
	\pi_{u_1}(j_1-1)+ \pi_{u_1}(j_1) &= \pi_{u_2}(j_2-1)+ \pi_{u_2}(j_2)\mod N. \nonumber \\
	\iff	\pi_{u_1}(j_1-1)+ \pi_{u_1}(j_1) &-\left( \pi_{u_2}(j_2-1)+ \pi_{u_2}(j_2)\right)=kN \text{ for } -2 \leq k \leq 2.
\end{align}
For each equation in \eqref{eqn:sys_of_eqn_vuj_PP2,4_(iii)}, fix a choice of $k$. This leads to a total of $5^{p}$ choice of system of equations for $W \in \PP_{2}(p,p)$ and $5^{p+1}$ for $W \in \PP_{2,4}(p,p)$. For each choice of the system of equations, $\frac{1}{N^{p+1}}\#\Pi^*_{L_n,\Delta_n}(W)$ can be approximated by an appropriate Riemann sum as in part (i). As the number of choices are finite, in this case, $\lim_{n\to\infty}\frac{1}{N^{p+1}}\#\Pi^*_{L_n,\Delta_n}(W)$ is a finite sum of integrals. Note that similar arguments also work for $L_n(i,j)=|i-j|\mod N$. This completes the proof.
\end{proof}
\begin{remark}
The proof of Theorem \ref{thm:Lij_secondmoment} would also work for link functions, $L(i,j)=(i \pm j) \mod M$ for $M=\omega(N)$.
\end{remark}

\vskip5pt
\noindent \textbf{Acknowledgment:} The research work of S.N. Maurya is supported by the fund:
NBHM Post-doctoral Fellowship (order no. 0204/10/(25)/2023/R$\&$D-II/2803). A part of this research was carried out  while attending the conference "Topics in High Dimensional Probability" (code: ICTS/thdp2023/1) at International Centre for Theoretical Sciences (ICTS).


\begin{thebibliography}{10}
	
	\bibitem{adhikari_saha2017}
	Kartick Adhikari and Koushik Saha, \emph{Fluctuations of eigenvalues of
		patterned random matrices}, J. Math. Phys. \textbf{58} (2017), no.~6, 063301,
	20. \MR{3666201}
	
	\bibitem{aldrovaandi_circulant_book_01}
	R.~Aldrovandi, \emph{Special matrices of mathematical physics}, World
	Scientific Publishing Co., Inc., River Edge, NJ, 2001, Stochastic, circulant
	and Bell matrices. \MR{1859347}
	
	\bibitem{Ali_Srivastava_JPA_2022_levelspacing}
	Md~Sabir Ali and Shashi C.~L. Srivastava, \emph{Patterned random matrices:
		deviations from universality}, J. Phys. A \textbf{55} (2022), no.~49, Paper
	No. 495201, 12. \MR{4523795}
	
	\bibitem{bai_zd_99}
	Z.~D. Bai, \emph{Methodologies in spectral analysis of large-dimensional random
		matrices, a review}, Statist. Sinica \textbf{9} (1999), no.~3, 611--677, With
	comments by G. J. Rodgers and Jack W. Silverstein; and a rejoinder by the
	author. \MR{1711663}
	
	\bibitem{Toeplitz_compressed_sending_2007}
	Waheed~U. Bajwa, Jarvis~D. Haupt, Gil~M. Raz, Stephen~J. Wright, and Robert~D.
	Nowak, \emph{Toeplitz-structured compressed sensing matrices}, 2007 IEEE/SP
	14th Workshop on Statistical Signal Processing, 2007, pp.~294--298.
	
	\bibitem{Barrera_Paulo_circulant_singular_2022}
	Gerardo Barrera and Paulo Manrique-Mir\'on, \emph{The asymptotic distribution
		of the condition number for random circulant matrices}, Extremes \textbf{25}
	(2022), no.~4, 567--594. \MR{4505180}
	
	\bibitem{Basak_Bose_Sunder_Hankeltype_2015}
	Anirban Basak, Arup Bose, and Soumendu~Sundar Mukherjee, \emph{Limiting
		spectral distribution of a class of {H}ankel type random matrices}, Random
	Matrices Theory Appl. \textbf{4} (2015), no.~3, 1550010, 24. \MR{3385704}
	
	\bibitem{Basu_Bose_Ganguly_Block_Toeplitz_LSD}
	Riddhipratim Basu, Arup Bose, Shirshendu Ganguly, and Rajat~Subhra Hazra,
	\emph{Limiting spectral distribution of block matrices with {T}oeplitz block
		structure}, Statist. Probab. Lett. \textbf{82} (2012), no.~7, 1430--1438.
	\MR{2929797}
	
	\bibitem{blackwell_etal_disco_2021}
	Keller Blackwell, Neelima Borade, Arup Bose, Charles~Devlin VI, Noah Luntzlara,
	Renyuan Ma, Steven~J. Miller, Soumendu~Sundar Mukherjee, Mengxi Wang, and
	Wanqiao Xu, \emph{Distribution of eigenvalues of matrix ensembles arising
		from wigner and palindromic toeplitz blocks}, arxiv ((2021),
	arXiv:2102.05839).
	
	\bibitem{Bogomolny_Spectrum_Random_Toep_2020_PRE}
	Eugene Bogomolny, \emph{Spectral statistics of random {T}oeplitz matrices},
	Phys. Rev. E \textbf{102} (2020), no.~4, 040101(R), 6. \MR{4174176}
	
	\bibitem{bose_saha_patter_JC_annals}
	Arup Bose, Rajat~Subhra Hazra, and Koushik Saha, \emph{Convergence of joint
		moments for independent random patterned matrices}, Ann. Probab. \textbf{39}
	(2011), no.~4, 1607--1620. \MR{2857252}
	
	\bibitem{Bose_Hazra_saha_norm_circulant_2011}
	\bysame, \emph{Spectral norm of circulant-type matrices}, J. Theoret. Probab.
	\textbf{24} (2011), no.~2, 479--516. \MR{2795050}
	
	\bibitem{bose_sen_LSD_EJP}
	Arup Bose and Arnab Sen, \emph{Another look at the moment method for large
		dimensional random matrices}, Electron. J. Probab. \textbf{13} (2008), no.
	21, 588--628. \MR{2399292}
	
	\bibitem{Botcher_Toeplitz_book}
	Albrecht B\"ottcher and Sergei~M. Grudsky, \emph{Spectral properties of banded
		{T}oeplitz matrices}, Society for Industrial and Applied Mathematics (SIAM),
	Philadelphia, PA, 2005. \MR{2179973}
	
	\bibitem{bryc_lsd_06}
	Wlodzimierz Bryc, Amir Dembo, and Tiefeng Jiang, \emph{Spectral measure of
		large random {H}ankel, {M}arkov and {T}oeplitz matrices}, Ann. Probab.
	\textbf{34} (2006), no.~1, 1--38. \MR{2206341}
	
	\bibitem{Miller_etal_Checkerboard_2018_RMTA}
	Paula Burkhardt, Peter Cohen, Jonathan DeWitt, Max Hlavacek, Steven~J. Miller,
	Carsten Sprunger, Yen Nhi~Truong Vu, Roger Van~Peski, and Kevin Yang,
	\emph{Random matrix ensembles with split limiting behavior}, Random Matrices
	Theory Appl. \textbf{7} (2018), no.~3, 1850006, 39. \MR{3816304}
	
	\bibitem{chatterjee2009fluctuations}
	Sourav Chatterjee, \emph{Fluctuations of eigenvalues and second order
		{P}oincar\'{e} inequalities}, Probab. Theory Related Fields \textbf{143}
	(2009), no.~1-2, 1--40. \MR{2449121}
	
	\bibitem{couillet_RMT+wireless_book_11}
	Romain Couillet and M\'{e}rouane Debbah, \emph{Random {M}atrix {M}ethods for
		{W}ireless {C}ommunications}, Cambridge University Press, Cambridge, 2011.
	\MR{2884783}
	
	\bibitem{Johansson_Berry_Essen_2024_EJP}
	Klara Courteaut, Kurt Johansson, and Gaultier Lambert, \emph{From
		{B}erry-{E}sseen to super-exponential}, Electron. J. Probab. \textbf{29}
	(2024), Paper No. 11, 48. \MR{4690591}
	
	\bibitem{philip+davis_circulant_book_79}
	Philip~J. Davis, \emph{Circulant matrices}, A Wiley-Interscience Publication,
	John Wiley \& Sons, New York-Chichester-Brisbane, 1979, Pure and Applied
	Mathematics. \MR{543191}
	
	\bibitem{Scott_Goldreich_Ron_error_correction_2020}
	Scott Decatur, Oded Goldreich, and Dana Ron, \emph{A probabilistic
		error-correcting scheme that provides partial secrecy}, Computational
	complexity and property testing---on the interplay between randomness and
	computation, Lecture Notes in Comput. Sci., vol. 12050, Springer, Cham,
	[2020] \copyright2020, pp.~1--8. \MR{4158937}
	
	\bibitem{Dunn_Miller_Swirl_2022}
	T.~Dunn, H.~L. Fleischmann, F.~Jackson, S.~Khunger, S.~J. Miller,
	L.~Reifenberg, A.~Shashkov, and S.~Willis, \emph{Limiting spectral
		distributions of families of block matrix ensembles}, PUMP J. Undergrad. Res.
	\textbf{5} (2022), 122--147. \MR{4452395}
	
	\bibitem{fan+yao_time+series_book_03}
	Jianqing Fan and Qiwei Yao, \emph{Nonlinear time series}, Springer Series in
	Statistics, Springer-Verlag, New York, 2003, Nonparametric and parametric
	methods. \MR{1964455}
	
	\bibitem{far_spectra_block_toeplitz}
	Reza~Rashidi Far, Tamer Oraby, Wlodzimierz Bryc, and Roland Speicher,
	\emph{Spectra of large block matrices}, (2006), arXiv:cs/0610045.
	
	\bibitem{Gikko_book}
	Vyacheslav~L. Girko, \emph{Theory of {St}ochastic {C}anonical {E}quations.
		{V}ol. {I}}, Mathematics and its Applications, vol. 535, Kluwer Academic
	Publishers, Dordrecht, 2001. \MR{1887675}
	
	\bibitem{Goldreich_tal_Topleitz_rigidity_2018}
	Oded Goldreich and Avishay Tal, \emph{Matrix rigidity of random {T}oeplitz
		matrices}, Comput. Complexity \textbf{27} (2018), no.~2, 305--350.
	\MR{3803769}
	
	\bibitem{hammond_miller_05}
	Christopher Hammond and Steven~J. Miller, \emph{Distribution of eigenvalues for
		the ensemble of real symmetric {T}oeplitz matrices}, J. Theoret. Probab.
	\textbf{18} (2005), no.~3, 537--566. \MR{2167641}
	
	\bibitem{jain_signal+analysis_15}
	Pooja Jain and Ram~Bilas Pachori, \emph{An iterative approach for decomposition
		of multi-component non-stationary signals based on eigenvalue decomposition
		of the hankel matrix}, Journal of the Franklin Institute \textbf{352} (2015),
	no.~10, 4017--4044.
	
	\bibitem{Murat_Kopp_Miller_Block_Circulant_2013}
	Murat Kolo\u{g}lu, Gene~S. Kopp, and Steven~J. Miller, \emph{The limiting
		spectral measure for ensembles of symmetric block circulant matrices}, J.
	Theoret. Probab. \textbf{26} (2013), no.~4, 1020--1060.
	
	\bibitem{Leibzirer_Rosenthal_Sparse_RSC_2022}
	Shaked Leibzirer and Ron Rosenthal, \emph{Eigenvalues and spectral gap in
		sparse random simplicial complexes}, (2022), arXiv:2202.00349.
	
	\bibitem{Li_Liu_wang_Block_Toepliz_2011}
	Yi-Ting Li, Dang-Zheng Liu, and Zheng-Dong Wang, \emph{Limit distributions of
		eigenvalues for random block {T}oeplitz and {H}ankel matrices}, J. Theoret.
	Probab. \textbf{24} (2011), no.~4, 1063--1086. \MR{2851245}
	
	\bibitem{Liao_Yuan_Circconv_2019}
	Siyu Liao and Bo~Yuan, \emph{Circconv: A structured convolution with low
		complexity}, Proceedings of the AAAI Conference on Artificial Intelligence
	\textbf{33} (2019), no.~01, 4287--4294.
	
	\bibitem{liu2012fluctuations}
	Dang-Zheng Liu, Xin Sun, and Zheng-Dong Wang, \emph{Fluctuations of eigenvalues
		for random {T}oeplitz and related matrices}, Electron. J. Probab. \textbf{17}
	(2012), no. 95, 22. \MR{2994843}
	
	\bibitem{Liu_Jiao_Lim_LU_Decompos_DL_2024}
	Yucong Liu, Simiao Jiao, and Lek-Heng Lim, \emph{L{U} decomposition and
		{T}oeplitz decomposition of a neural network}, Appl. Comput. Harmon. Anal.
	\textbf{68} (2024), Paper No. 101601, 12. \MR{4658353}
	
	\bibitem{Paulo_Toeplitz_conditionno_2022}
	Paulo Manrique-Mir\'on, \emph{Random {T}oeplitz matrices: the condition number
		under high stochastic dependence}, Random Matrices Theory Appl. \textbf{11}
	(2022), no.~3, Paper No. 2250027, 31. \MR{4440253}
	
	\bibitem{Massey_Miller_2007_Palin_Toeplitz}
	Adam Massey, Steven~J. Miller, and John Sinsheimer, \emph{Distribution of
		eigenvalues of real symmetric palindromic {T}oeplitz matrices and circulant
		matrices}, J. Theoret. Probab. \textbf{20} (2007), no.~3, 637--662.
	\MR{2337145}
	
	\bibitem{m&s_RcSc_jmp}
	Shambhu~Nath Maurya and Koushik Saha, \emph{Fluctuations of linear eigenvalue
		statistics of reverse circulant and symmetric circulant matrices with
		independent entries}, J. Math. Phys. \textbf{62} (2021), no.~4, 043506, 31.
	\MR{4248935}
	
	\bibitem{Miller_Swanson_2015_General_Toeplitz}
	Steven~J. Miller, Kirk Swanson, Kimsy Tor, and Karl Winsor, \emph{Limiting
		spectral measures for random matrix ensembles with a polynomial link
		function}, Random Matrices Theory Appl. \textbf{4} (2015), no.~2, 1550004,
	28. \MR{3356882}
	
	\bibitem{Ng_Yu_Toeplitz_Hashing_2024}
	Hong~Jie Ng, Wen~Yu Kon, Ignatius~William Primaatmaja, Chao Wang, and Charles
	Lim, \emph{Sampled subblock hashing for large-input randomness extraction},
	Phys. Rev. Appl. \textbf{21} (2024), 034005.
	
	\bibitem{Nishry_Paquette_Complex_hankel_norm_2023}
	Alon Nishry and Elliot Paquette, \emph{Gaussian analytic functions of bounded
		mean oscillation}, Anal. PDE \textbf{16} (2023), no.~1, 89--117. \MR{4578526}
	
	\bibitem{Oraby_block_circulant}
	Tamer Oraby, \emph{The spectral laws of {H}ermitian block-matrices with large
		random blocks}, Electron. Comm. Probab. \textbf{12} (2007), 465--476.
	\MR{2365648}
	
	\bibitem{Pan_John_random_circulant_norm_2015}
	Victor~Y. Pan, John Svadlenka, and Liang Zhao, \emph{Estimating the norms of
		random circulant and {T}oeplitz matrices and their inverses}, Linear Algebra
	Appl. \textbf{468} (2015), 197--210. \MR{3293250}
	
	\bibitem{Qi_SEE_circulant_2022}
	Xiao Qi, Mejdi Azaiez, Can Huang, and Chuanju Xu, \emph{An efficient numerical
		approach for stochastic evolution {PDE}s driven by random diffusion
		coefficients and multiplicative noise}, AIMS Math. \textbf{7} (2022), no.~12,
	20684--20710. \MR{4490835}
	
	\bibitem{Fred_raja_Generalized_gaussian_fluctuations_2024}
	Frederick Rajasekaran, \emph{Fluctuations of eigenvalues for generalized
		patterned gaussian random matrices}, (2024), arXiv:2405.07400.
	
	\bibitem{k+m+s_han_22}
	Kiran Kumar~A. S., Shambhu~Nath Maurya, and Koushik Saha, \emph{A non-gaussian
		limit for linear eigenvalue statistics of hankel matrices}, 2022,
	arXiv:2209.08252.
	
	\bibitem{shcherbina2015}
	M.~Shcherbina, \emph{On fluctuations of eigenvalues of random band matrices},
	J. Stat. Phys. \textbf{161} (2015), no.~1, 73--90. \MR{3392508}
	
	\bibitem{shohat_moment+problem_43}
	J.~A. Shohat and J.~D. Tamarkin, \emph{The {P}roblem of {M}oments}, American
	Mathematical Society Mathematical Surveys, Vol. I, American Mathematical
	Society, New York, 1943. \MR{0008438}
	
	\bibitem{szego_toeplitz_915}
	G.~Szeg\"{o}, \emph{Ein {G}renzwertsatz \"{u}ber die {T}oeplitzschen
		{D}eterminanten einer reellen positiven {F}unktion}, Math. Ann. \textbf{76}
	(1915), no.~4, 490--503. \MR{1511838}
	
	\bibitem{Vershynin_High_dim_prob_book}
	Roman Vershynin, \emph{High-dimensional probability}, Cambridge Series in
	Statistical and Probabilistic Mathematics, vol.~47, Cambridge University
	Press, Cambridge, 2018, An introduction with applications in data science,
	With a foreword by Sara van de Geer. \MR{3837109}
	
	\bibitem{Yin_Liu_Mend_IEEE_Circulant_regression_2020}
	Rong Yin, Yong Liu, Weiping Wang, and Dan Meng, \emph{Sketch kernel ridge
		regression using circulant matrix: algorithm and theory}, IEEE Trans. Neural
	Netw. Learn. Syst. \textbf{31} (2020), no.~9, 3512--3524. \MR{4147202}
	
	\bibitem{Hui_Cheng_JL_embedding_circulant_2014}
	Hui Zhang and Lizhi Cheng, \emph{New bounds for circulant
		{J}ohnson-{L}indenstrauss embeddings}, Commun. Math. Sci. \textbf{12} (2014),
	no.~4, 695--705. \MR{3164859}
	
\end{thebibliography}

\providecommand{\bysame}{\leavevmode\hbox to3em{\hrulefill}\thinspace}
\providecommand{\MR}{\relax\ifhmode\unskip\space\fi MR }
\providecommand{\MRhref}[2]{%
	\href{http://www.ams.org/mathscinet-getitem?mr=#1}{#2}
}
\providecommand{\href}[2]{#2}

\end{document}